%=========================================================================={{{1
\documentclass[]{amsart} % 'draft' prints black marks at overfull boxes
\usepackage[final]{showkeys}       % use option final to suppress output
\usepackage[breaklinks]{hyperref}
\usepackage{amssymb, tabularx}
\usepackage{tikz, float}
\usetikzlibrary{calc,decorations.pathreplacing,matrix,arrows,positioning}

% mathbb and mathcal symbols
\newcommand{\NN}{\mathbb{N}}
\newcommand{\ZZ}{\mathbb{Z}}

\newcommand{\V}{\mathcal{V}}
\newcommand{\A}{\mathbb{A}}

% symbol overrides
\renewcommand{\phi}{\varphi}
\renewcommand{\epsilon}{\varepsilon}

% theorems and similar environments
\theoremstyle{plain} \newtheorem{thm}{Theorem}[section]

\newtheorem{lem}[thm]{Lemma}
\newtheorem{cor}[thm]{Corollary}
\theoremstyle{definition} \newtheorem{defn}[thm]{Definition}
\theoremstyle{definition} \newtheorem*{defn*}{Definition}
\theoremstyle{remark} 
\theoremstyle{plain} \newtheorem*{claim}{Claim}
\newenvironment{claimproof}
  {\begin{proof}[Proof of claim]  }
  {\end{proof}}

% custom commands

\newcommand{\mat}[1]{\left( \begin{matrix} #1 \end{matrix} \right)}
\newcommand{\Case}[1]{\smallskip \noindent \textbf{Case #1:}}
\newcommand{\m}[1]{\mathbb{#1}}   % for models

% misc
\numberwithin{equation}{section}  % number equations within sections

% document specific stuff
\newcommand{\B}{\mathbb{B}}
\newcommand{\C}{\mathbb{C}}
\newcommand{\D}{\mathbb{D}}
\renewcommand{\S}{\mathbb{S}}
\newcommand{\W}{\mathbb{W}}

\newcommand{\M}{\mathcal{M}}
\newcommand{\T}{\mathcal{T}}
\newcommand{\0}{\mathbf{0}}
\newcommand{\id}{\text{id}}

\newcommand{\HS}{\textbf{HS}}

\newcommand{\Cg}{\text{Cg}}
\newcommand{\Mod}{\text{Mod}}
\newcommand{\Con}{\text{Con}}
\newcommand{\Pol}{\text{Pol}}
\newcommand{\Range}{\text{Range}}
\newcommand{\Supp}{\text{Supp}}

\newcommand{\Chain}[1]{\stackrel{\underline{#1}}{\ }}
%==========================================================================}}}1

%=========================================================================={{{1
\title{The Undecidability of the Definability of Principal Subcongruences}
\author{Matthew Moore}
\date{\today}
\address{
  %(Matthew Moore)
  Vanderbilt University;
  Nashville, TN 37240;
  U.S.A.}
\email{matthew.moore@vanderbilt.edu}

\begin{document} \maketitle
\begin{abstract}
For each Turing machine $\T$, we construct an algebra $\A'(\T)$ such that
the variety generated by $\A'(\T)$ has definable principal subcongruences if
and only if $\T$ halts, thus proving that the property of having definable
principal subcongruences is undecidable for a finite algebra. A consequence
of this is that there is no algorithm that takes as input a finite algebra a
decides whether that algebra is finitely based.
\end{abstract} 
%==========================================================================}}}1

%=========================================================================={{{1
\section{Introduction}
Given a variety $\V$, the \emph{residual bound} of $\V$ is the least
cardinal $\lambda$ that is strictly larger than the cardinality of every
subdirectly irreducible (=SI) member of $\V$. If such a $\lambda$ exists we
write $\kappa(\V) = \lambda$, and if no such $\lambda$ exists, then we write
$\kappa(\V) = \infty$. If $\V = \V(\A)$ is the variety generated by the
algebra $\A$, we define $\kappa(\A) = \kappa(\V(\A))$. The
\emph{RS-conjecture} (see \cite{HobbyMcKenzieStructureFiniteAlgebras}) is
the conjecture that $\kappa(\A)\geq \omega$ implies $\kappa(\A) = \infty$
for finite $\A$. McKenzie \cite{McKenzieResidualBounds} disproves this
conjecture by exhibiting an algebra with residual bound precisely $\omega$.
This algebra is used by McKenzie as a basis for his groundbreaking paper
\cite{McKenzieResidualBoundNotComp}, in which to each Turing machine $\T$ an
algebra $\A(\T)$ is associated such that $\kappa(\A(\T)) < \omega$ if and
only if $\T$ halts, thus proving that the property of having a finite
residual bound is an undecidable property of a finite algebra.

The problem of algorithmically determining whether $\kappa(\A)<\omega$ is
closely related to a problem due to Alfred Tarski, called \emph{Tarski's
finite basis problem}. An algebra $\A$ is said to be \emph{finitely based}
if the infinite set of identities which are true in $\A$ can be derived from
a finite subset of them. Tarski's problem is the question: is there an
algorithm that takes as input a finite algebra and determines whether it is
finitely based? McKenzie \cite{McKenzieTarskisFiniteBasis} uses a
construction similar to his $\A(\T)$ construction to provide a negative
answer to this question, and Willard \cite{WillardTarskisFiniteBasis} shows
that, in fact, the original $\A(\T)$ is finitely based if and only if $\T$
halts. Thus, there is no algorithm that decides whether an algebra is
finitely based for every finite algebra.

An algebra $\A$ is finitely based if and only if $\V(\A)$ (the variety
generated by $\A$) is finitely axiomatizable. One approach to proving that
$\V(\A)$ is finitely axiomatizable is to first show that $\kappa(\A) <
\omega$, and then to show that $\V(\A)$ has definable principal congruences.
These two features are sufficient to imply that $\V(\A)$ is finitely
axiomatizable. A formula $\psi(w,x,y,z)$ defined in an algebraic first-order
language $L$ is said to be a \emph{congruence formula} if $\psi$ is
existential positive and for every model $\A$ of $L$ and for all $a,b,c,d\in
A$, $\A\models \psi(a,b,c,d)$ implies that $(a,b)$ belongs to the congruence
generated by the pair $(c,d)$. A class $\mathcal{C}$ of algebras of the same
type is said to have \emph{definable principal congruences (DPC)} if there
is a congruence formula $\psi$ such that for every $\A\in \mathcal{C}$ and
every $a,b\in A$ the formula $\psi(x,y,a,b)$ defines the relation
``$(x,y)\in \Con^{\B}(a,b)$''. McKenzie \cite{McKenzieParaprimalVarieties}
shows that if a variety $\V$ has definable principal congruences and
$\kappa(\V) < \omega$, then $\V$ is finitely based.

Baker and Wang \cite{BakerWangDPSC} generalize DPC by saying that a class of
algebras $\mathcal{C}$ (all of the same type) has \emph{definable principal
subcongruences (DPSC)} if there are congruence formulas $\Gamma$ and $\psi$
such that for all $\A\in \mathcal{C}$ and all $a,b\in A$, if $a\neq b$ then
there exist $c,d\in A$ such that $c\neq d$, $\A\models \Gamma(c,d,a,b)$, and
$\psi(-,-,c,d)$ defines $\Cg^\A(c,d)$. It is convenient to observe that if
the type of $\mathcal{C}$ is finite, then there is a first-order formula,
$\Pi_\psi(y,z)$ so that $\A\models \Pi_{\psi}(c,d)$ (where $\A$ is any
algebra of this type) if and only if $\{(a,b) \mid \A\models \psi(a,b,c,d)
\}$ is the congruence generated by $(c,d)$. In symbols, a class
$\mathcal{C}$ of algebras of the same finite type has DPSC if there are
congruence formulas $\Gamma(w,x,y,z)$ and $\psi(w,x,y,z)$ such that for all
$\A\in \mathcal{C}$, 
\[
  \A\models \forall a,b \left[ a\neq b \rightarrow \exists c,d \left[ c\neq
    d \wedge \Gamma(c,d,a,b) \wedge \Pi_\psi(c,d) \right] \right].
\]

\newcolumntype{A}{>{\centering\arraybackslash} m{.45\linewidth}}
\newcolumntype{B}{>{\centering\arraybackslash} m{.45\linewidth}}
\begin{figure}[h] \begin{tabular}{AB}
  \begin{tikzpicture}[font=\scriptsize,smooth]
    \node (ab) {$\forall \; \Cg(a,b)$};
    \node (cd) [below=of ab,xshift=-2em] {$\forall \; \Cg(c,d)$};
    \coordinate (t1) at (ab|-cd);
    \coordinate (t1) at ($(t1) !0.5! (ab)$);
    \draw (t1) ellipse (6em and 5em)
      (ab) to[out=225,in=90] node[right]{$\A\models\psi(c,d,a,b)$} (cd);
    \node at (current bounding box.north west) {$\Con(\A)$};
  \end{tikzpicture}
  &
  \begin{tikzpicture}[font=\scriptsize,smooth]
    \node (ab) {$\forall \; \Cg(a,b)$};
    \node (cd) [below=of ab,xshift=-2em] {$\exists \; \Cg(c,d)$};
    \node (rs) [below=of cd,xshift=2em] {$\forall \; \Cg(r,s)$};
    \coordinate (t1) at ($(ab) !0.5! (rs)$);
    \draw (t1) ellipse (7em and 7em)
      (ab) to[out=225,in=90] node[right]{$\B\models\Gamma(c,d,a,b)$} (cd)
      (cd) to[out=270,in=135] node[right]{$\B\models\psi(r,s,c,d)$} (rs);
    \node at (current bounding box.north west) {$\Con(\B)$};
  \end{tikzpicture}
\end{tabular}
\caption{$\A$ has DPC via $\psi$, and $\B$ has DPSC via $\Gamma$ and
  $\psi$.}
\end{figure}
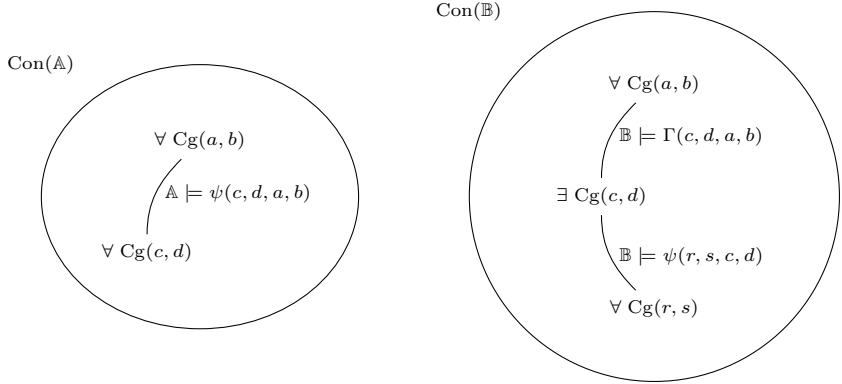

Baker and Wang \cite{BakerWangDPSC} use the fact that congruence
distributive varieties have definable principal subcongruences to give a new
proof of K. Baker's Finite Basis Theorem \cite{BakerFiniteBasis}: if $\A$ is
a finite algebra of finite type and $\V(\A)$ is congruence distributive,
then $\A$ is finitely based. Willard \cite{WillardFiniteBasisTheorem}
extends Baker's theorem by showing that if the variety has finite type, is
residually finite, and is congruence $\wedge$-semidistributive ($\V(\A(\T))$
has these features if $\T$ halts), then the variety is finitely based.
Since $\V(\A(\T))$ is finitely axiomatizable if and only if $\T$ halts, and
finitely axiomatizability is so closely related to DPC and DPSC, it is
natural to consider whether the failure of finite axiomatizability when $\T$
does not halt is related to a failure of DPC or DPSC in $\V(\A(\T))$.

The main result of this paper is to construct an algebra $\A'(\T)$ based on
McKenzie's $\A(\T)$ and to show that $\A'(\T)$ has definable principal
subcongruences if and only if $\T$ halts. Since the halting problem is
undecidable, this proves that the property of having DPSC is undecidable
(i.e. there is no algorithm that takes as input a finite algebra and giving
as output the correct answer to the question: ``does the variety generated
by this algebra have DPSC?''). The proof of this involves many cases, an
exploration of ``$\A'(\T)$-arithmetic'', and a fine analysis of the
polynomials of $\A'(\T)$. We begin in Section \ref{sec:defining A(T)} with a
description of the algebra $\A'(\T)$. Section \ref{sec:modifying mckenzies
argument} details the modifications to McKenzie's original argument that are
necessary to show that $\kappa(\A'(\T))<\omega$ if and only if $\T$ halts.
Section \ref{sec:SIs} then gives a detailed description of the subdirectly
irreducible algebras of $\V(\A'(\T))$ that will be needed throughout. The
proof that DPSC is undecidable is broken into two cases, depending on
whether $\T$ does or does not halt. The case where $\T$ halts is addressed
in Section \ref{sec:T halts}, and is quite complicated, involving many
subcases. The case where $\T$ does not halt is addressed in Section
\ref{sec:T does not halt}, and a short negative answer to Tarski's problem
using the undecidability of definable principal subcongruences is given in
this section as well.

The results in this paper originated with the examination of properties of
$\A(\T)$. Finite axiomatizability is closely related to the properties of
definable principal congruences and definable principal subcongruences, and
there was a natural question of whether McKenzie's negative answer to
Tarski's finite basis problem was the consequence of a more primitive result
concerning either DPC or DPSC. Although it is true that this is the
situation for $\A'(\T)$, it was recently shown by the author in
\cite{MooreAlgNoDPSC} that the original $\A(\T)$ does \emph{not} have DPSC.
This is the first known example of a congruence $\wedge$-semidistributive
variety with finite residual bound that does not have DPSC. The methods used
to prove the undecidability of definable principal subcongruences do not
appear to be amenable to proving the undecidability of definable principal
congruences, but the overall structure of the argument and the fine analysis
of polynomials in $\V(\A'(\T))$ may provide a foundation for proving the
undecidability of DPC as well.
%==========================================================================}}}1

%=========================================================================={{{1
\section{Defining $\A'(\T)$} \label{sec:defining A(T)} We define a
\emph{Turing machine} $\T$ to be a finite list of $5$-tuples $(s,r,w,d,t)$,
called the \emph{instructions} of $\T$, and interpreted as ``if in state $s$
and reading $r$, then write $w$, move $d$, and enter state $t$.'' The set of
states is finite, $r,w\in \{0,1\}$, and $d\in \{\text{L},\text{R}\}$. A
Turing machine takes as input an infinite bidirectional tape
$\tau:\ZZ\to\{0,1\}$ that has finite support (i.e. $\tau^{-1}(\{1\})$ is
finite). If $\T$ stops computation on some input, then $\T$ is said to have
\emph{halted} on that input. For this reason, we say that the Turing machine
halts (without specifying the input) if it halts on the empty tape
$\tau(x)=0$. We enumerate the states of $\T$ as $\{\mu_0,\ldots,\mu_n\}$,
where $\mu_1$ is the initial (starting) state, and $\mu_0$ is the halting
state (which might not ever be reached). $\T$ has no instruction of the form
$(\mu_0,r,w,d,t)$ but for every pair $(i,r)$ with $1\leq i\leq n$ and $r\in
\{0,1\}$ does have precisely one instruction of the form $(\mu_i, r, w, d,
t)$.

Given a Turing machine $\T$ with states $\{\mu_0,\ldots,\mu_n\}$, we
associate to $\T$ an algebra $\A'(\T)$. We will now describe the algebra
$\A'(\T)$. Let
\begin{gather*}
  U = \{1,2,\text{H}\}, 
    \qquad \qquad 
    W = \{C,D,\partial C,\partial D\}, 
    \qquad \qquad 
    A = \{0\}\cup U\cup W,\\
  V_{ir}^s = \{C_{ir}^s, D_{ir}^s, M_i^r, \partial C_{ir}^s,
    \partial D_{ir}^s, \partial M_i^r\} 
    \qquad \text{for} \qquad 
    0\leq i\leq n \text{ and } \{r,s\}\subseteq \{0,1\}, \\
  V_{ir} = V_{ir}^0 \cup V_{ir}^1,
    \qquad \qquad 
    V_i = V_{i0} \cup V_{i1}, 
    \qquad \qquad 
    V = \bigcup \{ V_i \mid 0\leq i\leq n \}.
\end{gather*}
The underlying set of $\A'(\T)$ is $A'(\T) = A\cup V$. In the operations
defined below, the ``$\partial$'' is taken to be a permutation of order $2$
with domain $V\cup W$ (e.g.  $\partial\partial C = C$), and is referred to
as ``bar''. It should be mentioned that $\partial$ is \emph{not} an
operation of $\A'(\T)$.  We now describe the fundamental operations of
$\A'(\T)$. The algebra $\A'(\T)$ is a height $1$ $\wedge$-semilattice (i.e. it
is ``flat'') with bottom element $0$:
\newcolumntype{A}{>{\centering\arraybackslash} m{.45\linewidth}}
\newcolumntype{B}{>{\centering\arraybackslash} m{.45\linewidth}}
\begin{center} \begin{tabular}{AB}
  $\displaystyle{ x\wedge y = \begin{cases}
    x & \text{if } x = y, \\
    0 & \text{otherwise.}
  \end{cases} }$
  &
  \begin{tikzpicture}[font=\small,smooth,node distance=1em]
    \node (x1) {$x_1$};
    \node (x2) [right=of x1] {$x_2$};
    \node (dots) [right=of x2] {$\cdots$};
    \node (zero) [below=of x2] {$0$};
    \draw (x1) -- (zero) (x2)--(zero) (dots) -- (zero);
  \end{tikzpicture}
\end{tabular} \end{center}
This semilattice structure induces an order, $\leq$, on algebras in
$\V(\A'(\T))$. There is a binary nonassociative ``multiplication'', defined
by
\begin{align*}
  & 2\cdot D = H\cdot C = D, & & 1\cdot C = C, \\
  & 2\cdot \partial D = H\cdot \partial C = \partial D, 
  & & 1\cdot \partial C = \partial C,
\end{align*}
and $x\cdot y = 0$ otherwise. The next operations play the role of
controlling the production of large SI's (i.e. those SI's not contained in 
$\HS(\A'(\T))$) in McKenzie's original argument. Such SI's are fully
described in Section \ref{sec:SIs}. Define
\begin{align*}
  J(x,y,z) 
    = (x\wedge \partial y \wedge z) \vee (x\wedge y)
    = \begin{cases}
      x          & \text{if } x = y, \\
      x \wedge z & \text{if } x = \partial y\in V\cup W, \\
      0          & \text{otherwise},
    \end{cases} \\
  J'(x,y,z) 
    = (x\wedge y \wedge z) \vee (x\wedge \partial y)
    = \begin{cases}
      x \wedge z & \text{if } x = y, \\
      x          & \text{if } x = \partial y\in V\cup W, \\
      0          & \text{otherwise},
    \end{cases}
\end{align*} 
and
\[
  K(x,y,z) 
  = (\partial x \wedge y) \vee (\partial x \wedge \partial y \wedge
  z) \vee (x \wedge y \wedge z) 
  = \begin{cases}
    y & \text{if } x = \partial y\in V\cup W, \\
    z & \text{if } x = y = \partial z\in V\cup W, \\
    x\wedge y\wedge z & \text{otherwise}.
  \end{cases}
\]
(In expressions like $x\wedge \partial y \wedge z$, if $y$ does not lie in
the domain of $\partial$, then we take $\partial y$ to be $0$). As we will
see, the $J$ and $J'$ operations force a certain easy-to-analyze structure
on the SI's of the variety, and the $K$ operation allows us to simplify
certain kinds of polynomials in the SI's in $\mathbf{HS}(\A'(\T))$ (i.e. the
small SI's). This simplification of polynomials will be the key to showing
that $\V(\A'(\T))$ has DPSC when $\T$ halts. Define
\begin{align*}
  S_0(u,x,y,z) & = \begin{cases}
    (x\wedge y) \vee (x\wedge z) & \text{if } u\in V_0, \\
    0 & \text{otherwise},
  \end{cases} \\
  S_1(u,x,y,z) & = \begin{cases}
    (x\wedge y) \vee (x\wedge z) & \text{if } u\in \{1,2\}, \\
    0 & \text{otherwise},
  \end{cases} \\
  S_2(u,v,x,y,z) & = \begin{cases}
    (x\wedge y) \vee (x\wedge z) & \text{if } u = \partial v\in V\cup W, \\
    0 & \text{otherwise}.
  \end{cases}
\end{align*}

The operation of left multiplication by $y$, $\lambda_y(x) = y\cdot x$, is,
in general, not injective. The next operation will allow us to produce
``barred'' elements (i.e. produce $\partial x$ from $x$) in cases when
$\lambda_y$ is not injective. Let
\[
  T(w,x,y,z) = \begin{cases}
    w\cdot x & \text{if } w\cdot x = y \cdot z \text{ and } (w,x)=(y,z), \\
    \partial(w\cdot x) & \text{if } w\cdot x = y\cdot z\neq 0 \text{ and } 
      (w,x)\neq (y,z), \\
    0 & \text{otherwise}.
  \end{cases}
\]

Next, we define operations that emulate the computation of the Turing
machine on some tape. First, we define an operation that when applied to
certain elements of $A'(\T)^\ZZ$ will produce something that represents a
``blank tape'':
\[
  I(x) = \begin{cases}
    C_{10}^0 & \text{if } x = 1, \\
    M_1^0 & \text{if } x = \text{H}, \\
    D_{10}^0 & \text{if } x = 2, \\
    0 & \text{otherwise}.
  \end{cases}
\]
For each instruction of $\T$ of the form $(\mu_i,r,s,\text{L},\mu_j)$ and
each $t\in \{0,1\}$ define an operation
\[
  L_{irt}(x,y,u) = \begin{cases}
    C_{jt}^{s'} & \text{if } x = y = 1 \text{ and } u = C_{ir}^{s'} 
      \text{ for some } s', \\
    M_j^t & \text{if } x = \text{H}, y = 1, \text{ and } u = C_{ir}^t, \\
    D_{jt}^s & \text{if } x = 2, y = \text{H}, \text{ and } u = M_i^r, \\
    D_{jt}^{s'} & \text{if } x = y = 2 \text{ and } u = D_{ir}^{s'} 
      \text{ for some } s', \\
    \partial v & \text{if } u\in V \text{ and } 
      L_{irt}(x,y,\partial u) = v\in V \text{ by the above lines}, \\
    0 & \text{otherwise}.
  \end{cases}
\]
Let $\mathcal{L}$ be the set of all such operations. Similarly, for each
instruction of $\T$ of the form $(\mu_i,r,s,\text{R},\mu_j)$ and each $t\in
\{0,1\}$ define an operation
\[
  R_{irt}(x,y,u) = \begin{cases}
    C_{jt}^{s'} & \text{if } x = y = 1 \text{ and } u = C_{ir}^{s'} 
      \text{ for some } s', \\
    C_{jt}^s & \text{if } x = \text{H}, y = 1, \text{ and } u = M_i^r, \\
    M_j^t & \text{if } x = 2, y = \text{H}, \text{ and } u = D_{ir}^t, \\
    D_{jt}^{s'} & \text{if } x = y = 2 \text{ and } u = D_{ir}^{s'} 
      \text{ for some } s', \\
    \partial v & \text{if } u\in V \text{ and } 
      R_{irt}(x,y,\partial u) = v\in V \text{ by the above lines}, \\
    0 & \text{otherwise}.
  \end{cases}
\]
Let $\mathcal{R}$ be the set of all such operations.

When applied to certain elements from $A'(\T)^\ZZ$, these operations
simulate the computation of the Turing machine $\T$ on different input
tapes. Certain other elements of $\{1,2,H\}^{\ZZ}$ serve to track the
position of the Turing machine's head when operations from $\mathcal{L}\cup
\mathcal{R}$ are applied to elements of $\A'(\T)^{\ZZ}$ that encode the
contents of the tape. Define a binary relation $\prec$ on $\{1,2,\text{H}\}$
by $x\prec y$ if and only if $(x,y) = (2,2)$, $(x,y) = (2,\text{H})$, or
$(x,y) = (1,1)$. For $F\in \mathcal{L}\cup \mathcal{R}$ note that $F(x,y,z)
= 0$ except when $x\prec y$. As with the multiplication operation,
operations of the form $m(x) = F(u,v,x)$ with $F\in \mathcal{L}\cup
\mathcal{R}$ are not injective. The next two operations are very much like
the $T$ operation in that they allow us to produce barred elements in some
situations where $m(x)$ fails to be injective. For $F\in \mathcal{L}\cup
\mathcal{R}$ define operations
\begin{align*}
  U_F^0(x,y,z,u) & = \begin{cases}
    \partial F(y,z,u) & \text{if } x\prec z,\; x\neq y, \text{ and } F(y,z,u)\neq 0, \\
    F(y,z,u) & \text{if } x\prec z,\; x = y, \text{ and } F(y,z,u) \neq 0, \\
    0 & \text{otherwise}.
  \end{cases} \\
  U_F^1(x,y,z,u) & = \begin{cases}
    \partial F(x,y,u) & \text{if } x\prec z,\; y\neq z, \text{ and } F(x,y,u)\neq 0, \\
    F(x,y,u) & \text{if } x\prec z,\; y = z, \text{ and } F(x,y,u)\neq 0, \\
    0 & \text{otherwise}.
  \end{cases}
\end{align*}

The operations of $\A'(\T)$ are
\[
  \big\{0, \wedge, (\cdot), J, J', K, S_0, S_1, S_2, T, I \big\} \cup \mathcal{L} 
  \cup \mathcal{R} \cup \big\{U_F^0, U_F^1 \mid F\in \mathcal{L}
  \cup \mathcal{R} \big\}.
\]
Observe that all operations of $\A'(\T)$ are monotonic with respect to the
order induced by the semilattice structure. The $\A(\T)$ algebra from
\cite{McKenzieResidualBoundNotComp} has the same underlying set as $\A'(T)$,
and all of the same operations except for $K$. McKenzie proved the following
theorem.

\begin{thm}[McKenzie \cite{McKenzieResidualBoundNotComp}]
$\kappa(\A(\T)) < \omega$ if and only if $\T$ halts.
\end{thm}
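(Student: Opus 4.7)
The plan is to analyze the subdirectly irreducible (SI) algebras in $\V(\A(\T))$ and show that they must encode finite fragments of the computation of $\T$ on the blank tape. The operations are engineered with this in mind: the meet-semilattice structure together with multiplication, $I$, and the families $\mathcal{L}, \mathcal{R}$ gives each factor a ``tape slot'' whose contents $C_{ir}^s$, $D_{ir}^s$, $M_i^r$ encode the symbol under/adjacent to the head together with the state $\mu_i$ of $\T$. The relation $\prec$ and the requirement $F(x,y,z)=0$ unless $x\prec y$ pins down the head location. So my first step would be a structural classification of the SIs: identify the ``small'' SIs coming from $\HS(\A(\T))$, and then describe the ``large'' SIs as subalgebras of $\A(\T)^{\ZZ}$ whose elements arise by applying terms built from $I$ and operations in $\mathcal{L}\cup\mathcal{R}$, with the pairs $(x,\partial x)$ playing the role of monolith-candidates.

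For the ``only if'' (non-halting $\Rightarrow$ $\kappa=\infty$) direction, I would take a non-halting computation and, for each $k$, construct an SI of size growing with $k$. The construction is: fix the infinite sequence of tape configurations $\tau_0, \tau_1,\ldots, \tau_k$ produced by $\T$, build a subalgebra of $\A(\T)^{\ZZ}$ containing the coordinate-wise encoding of each $\tau_j$ together with its barred counterpart, and check that the pair $(x,\partial x)$ at the deepest step generates a minimal congruence containing all earlier such pairs. The operations $J, J', S_0, S_1, S_2$ are set up so that nothing collapses prematurely, making this algebra subdirectly irreducible with monolith at the final critical pair. Since $k$ is unbounded when $\T$ does not halt, we get SIs of unbounded size.

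For the converse, suppose $\T$ halts in $N$ steps. The goal is a uniform bound on $|B|$ for every SI $\B\in\V(\A(\T))$. From the structural classification, any SI is either a ``small'' one in $\HS(\A(\T))$ (bounded by $|A(\T)|$) or a ``computation SI'' of the type described above; the case analysis must show that the latter has length at most $N$ because $I$ produces only the initial configuration at state $\mu_1$ and every operation in $\mathcal{L}\cup\mathcal{R}$ corresponds to an actual $\T$-instruction, so no term can produce a trace longer than the halting time. The role of $T$ and $U_F^i$ is to supply ``barred'' versions in the non-injective cases, which one must check does not enlarge the SI beyond this bound. Combining gives $\kappa(\A(\T))\leq\max(|A(\T)|,N)+1<\omega$.

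The hard part will be the second direction, specifically the polynomial analysis needed to verify that no ``exotic'' SI exists outside the two families identified in the structural classification. One must argue, using the idempotency properties of $J,J',K$ and the carefully restricted domains of $S_0,S_1,S_2,T,U_F^i$, that every $p(x)-p(y)$ for a polynomial $p$ and a critical pair $(x,y)$ in an SI reduces to one of the tape-evolution pairs, ruling out unintended congruences. This is a lengthy case analysis of the $\A(\T)$-arithmetic (exactly the kind of analysis the paper's later sections extend to $\A'(\T)$), and is where the design choices of the algebra earn their keep.
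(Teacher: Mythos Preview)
This theorem is cited from McKenzie's paper rather than proved here; the present paper only records the additions needed to carry McKenzie's argument over to $\A'(\T)$. Still, Section~\ref{sec:modifying mckenzies argument} and Theorem~\ref{thm:McKenzie classify large SI's} give enough of the structure to assess your sketch.

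Your overall architecture---classify the SIs, then argue boundedness iff $\T$ halts---is the right one, but several key mechanisms are misidentified. First, the large SIs split into \emph{two} families, sequential and machine type, distinguished by whether $(\cdot)$ or the operations in $\mathcal{L}\cup\mathcal{R}$ survive; your sketch folds these into a single ``computation SI'' picture. Second, the barred pairs $(x,\partial x)$ are \emph{not} the monolith generators: in the sequential algebras $\S_n$ the monolith is $\Cg(b_0,0)$, and in the machine algebras $\mathbb{P}_N/\Theta_{(\Phi)}$ it is $\Cg(\mathcal{P},0)$ for a distinguished configuration. The role of $\partial$ is instead to power $S_2$, $J$, $J'$, which enforce the rigidity that rules out exotic SIs; in the large SIs one has $S_2\approx 0$, so $J,J',K$ collapse to meets (Lemma~\ref{lem:K on large SI}). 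Third, the link between halting and the size of the \emph{sequential} SIs is indirect: one realizes $\S_n$ inside $\A(\T)^{\ZZ}$ via the elements $\alpha_n,\beta_n$ of item~(2) in Section~\ref{sec:modifying mckenzies argument}, and the attainable $n$ is governed by how long the machine runs before producing an element of $V_0$ (which activates $S_0$ and kills the construction). Your proposed bound $\max(|A(\T)|,N)+1$ is thus not of the right shape. The part you correctly flag as hardest---the polynomial analysis excluding exotic SIs---is indeed the heart of McKenzie's proof, and Lemma~\ref{lem:A(T) res small} here isolates exactly the inductive step that drives it.
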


\noindent The fact that $\V(\A'(\T))$ has only finitely many subdirectly
irreducible algebras, all finite, if $\T$ halts is needed to prove that this
variety has definable principal subcongruences. Since we have modified the
algebra that this theorem refers to, we must show that this theorem as well
as other important properties of $\A(\T)$ still hold.
%==========================================================================}}}1

%=========================================================================={{{1
\section{Modifying McKenzie's Argument} \label{sec:modifying mckenzies argument}
McKenzie's argument is quite detailed and long, and fortunately only needs
to be added to, not changed. In this section we will detail the specific
additions to the arguments in papers \cite{McKenzieResidualBounds} and
\cite{McKenzieResidualBoundNotComp} that are needed, in order to prove that
the large subdirectly irreducible algebras in $\V(\A'(\T))$ satisfy
$K(x,y,z) = x\wedge y\wedge z$ and are otherwise precisely the same as the
large subdirectly irreducible algebras in $\V(\A(\T))$ as described in
\cite{McKenzieResidualBoundNotComp}.

%==========================================================================={{{
\begin{lem} \label{lem:flat and S2=0 then K=J'=xyz}
Suppose that $\B\in \V(\A'(\T))$ is flat and $\B\models S_2(u,v,x,y,z)
\approx 0$. Then 
\begin{enumerate}
  \item $\B\models K(x,y,z) \approx x\wedge y\wedge z$, and
  \item $\B\models J'(x,y,z) \approx x\wedge y\wedge z$.
\end{enumerate} 
\end{lem}
\begin{proof}
We begin with item (1). $\B\in \V(\A'(\T))$, so say $\B = \C/\theta$, where
$\C\leq \A'(\T)^L$ and $\theta\in \Con(\C)$. Suppose that $B\not\models
K(x,y,z)\approx x\wedge y\wedge z$. Then there are $a,b,c\in C$ with
$K(a,b,c) \not\theta\; (a\wedge b\wedge c)$. In particular, from the
definition of $K$ this means that at least $2$ of $a,b,c$ lie in distinct
$\theta$-classes, and by flatness, $(a\wedge b \wedge c)\;\theta\; 0$. We
therefore have that $K(a,b,c)\not\theta\; 0$.

Let $\alpha = K(a,b,c)$. From the definition of $K$ and since $K(a,b,c)\neq
(a\wedge b\wedge c)$, for each $l\in \Supp(\alpha)$ we have $a(l)\in \{
\alpha(l),\partial \alpha(l)\}$. If $\alpha \;\theta\; a$, then since
$\A'(\T)\models K(x,y,z)\wedge x \approx K(x,y,z)\wedge x\wedge y \wedge z$,
\[
  K(a,b,c) 
  = \alpha 
  \;\theta\; \alpha\wedge a
  = K(a,b,c)\wedge a
  = K(a,b,c) \wedge a\wedge b\wedge c.
\]
By the flatness of $\B$, this implies $K(a,b,c) \;\theta\; (a\wedge b\wedge
c)$, which contradicts observations in the first paragraph. It follows that
$\alpha\not\theta\; a$, so $(\alpha\wedge a) \;\theta\; 0$. From the
definition of $K$, the hypothesis that $\B\models S_2(u,v,x,y,z)\approx 0$,
and these observations,
\[
  0 
  \;\theta\; S_2(a,\alpha,a,a,a) 
  = K(\alpha,a,0)
  \;\theta\;
  K(\alpha,a,\alpha\wedge a)
  = \begin{cases}
    a(l) & \text{for } l\in \Supp(\alpha), \\
    0    & \text{for } l\not\in\Supp(\alpha).
  \end{cases}
\]
Let $a'= K(\alpha,a,\alpha\wedge a)$. Then $K(a,b,c) = K(a',b,c) \;\theta\;
K(0,b,c) = 0$, a contradiction.

We will now prove item (2) from item (1). Assume to the contrary that
$\B\not\models J'(x,y,z) \approx x\wedge y \wedge z$. Then there are
$d,e,f\in B$ such that $J'(d,e,f)\neq d\wedge e \wedge f$. If $d\wedge
e\wedge f\neq 0$, then by the flatness of $\B$ it follows that $d=e=f$. In
this case $J'(d,e,f) = d\wedge e\wedge f$ by definition, so it must be that
$d\wedge e\wedge f = 0$. Thus, we are assuming that $J'(d,e,f)\neq 0$.

Since $\A'(\T)\models K(x,y,x) \approx J'(x,y,x) \geq J'(x,y,z)$, for any
$d,e,f\in B$ by the proof of item (1) above we have
\[
  d\wedge e = K(d,e,d) = J'(d,e,d) \geq J'(d,e,f).
\]
$\B$ is flat, and by the previous paragraph $J'(d,e,f)\neq 0$. This means
that $J(d,e,f) = d\wedge e\neq 0$. Therefore $d=e$, and
\[
  J'(d,e,f)
  = J'(d,d,f)
  = d\wedge d\wedge f
  = d\wedge e\wedge f,
\]
a contradiction.
\end{proof}
%==========================================================================}}}

Many additions to McKenzie's argument occur where induction on polynomial
complexity is used, and the following lemma is the crux of the additional
argumentation in most of these instances.

%==========================================================================={{{
\begin{lem} \label{lem:A(T) res small}
Let $L$ be an index set and suppose that $\B\leq \A'(\T)^{L}$ and $C\subseteq
B$ are such that
\begin{enumerate}
  \item if $c\in C$ then $c(l)\neq 0$ for all $l\in L$ (we will say that $c$
    is nowhere $0$), and
  \item if $c\in C$ and $a\in B$ are such that $c(l) \in \{a(l),\partial
    a(l)\}$ for all $l\in L$, then $c = a$.
\end{enumerate}
If $f_1(x), f_2(x), f_3(x)$ are polynomials of $\B$ such that for all
$i$ either $f_i(x)$ is constant or $f_i^{-1}(C)\subseteq C$, then the
polynomial $f(x) = K(f_1(x),f_2(x),f_3(x))$ is also either constant in $\B$
or $f^{-1}(C)\subseteq C$.
\end{lem}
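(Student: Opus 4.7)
The plan is to inspect $K$ coordinate-by-coordinate on $\B \leq \A'(\T)^L$ and then leverage the rigidity condition (2) on $C$ to pin down $f_1(b)$ and $f_2(b)$. Suppose $f$ is not constant and fix $b \in B$ with $f(b) \in C$; the goal is to show $b \in C$. For each $l \in L$ we have $f(b)(l) = K(f_1(b)(l), f_2(b)(l), f_3(b)(l)) \neq 0$, so the definition of $K$ forces exactly one of the following at $l$:
\begin{enumerate}
\item[(a)] $f_1(b)(l) = \partial f_2(b)(l) \in V \cup W$, giving $f(b)(l) = f_2(b)(l) = \partial f_1(b)(l)$;
\item[(b)] $f_1(b)(l) = f_2(b)(l) = \partial f_3(b)(l) \in V \cup W$, giving $f(b)(l) = f_3(b)(l) = \partial f_1(b)(l) = \partial f_2(b)(l)$;
\item[(c)] $f_1(b)(l) = f_2(b)(l) = f_3(b)(l) \neq 0$, and $f(b)(l)$ equals this common value.
\end{enumerate}
In all three cases $f(b)(l) \in \{f_1(b)(l), \partial f_1(b)(l)\}$ and $f(b)(l) \in \{f_2(b)(l), \partial f_2(b)(l)\}$.

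Next, I apply condition (2) of $C$ twice to $c := f(b)$, with $a := f_1(b)$ and then $a := f_2(b)$, to obtain $f_1(b) = f_2(b) = f(b) \in C$. If either $f_1$ or $f_2$ is non-constant, the hypothesis $f_i^{-1}(C) \subseteq C$ immediately delivers $b \in C$. Otherwise $f_1$ and $f_2$ are both constant with common value $c := f(b) \in C$, and $f(x) = K(c, c, f_3(x))$ at every coordinate. Using that $c \in C$ is nowhere $0$ and that $\partial$ has no nonzero fixed points (the pairs $\{X, \partial X\}$ in $V \cup W$ are disjoint, while off $V \cup W$ the expression $\partial y$ is interpreted as $0$), case (a) would force $c(l) = \partial c(l) \neq 0$, which is impossible, and case (b) would force $f(b)(l) = \partial c(l) \neq c(l) = f(b)(l)$, also impossible. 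Hence case (c) must hold at every $l$, giving $f_3(b)(l) = c(l)$ and so $f_3(b) = c \in C$. If $f_3$ were constant then $f$ would be constant, contradicting the standing assumption; so $f_3$ is non-constant and the hypothesis forces $b \in C$.

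The only real obstacle is the subcase in which both $f_1$ and $f_2$ are constant, where no direct appeal to their preimage condition produces $b \in C$, and one has to propagate the information into $f_3$ via a second pass through the case analysis of $K$. The rigidity property (2) of $C$ is what lets a purely pointwise analysis of $K$ cascade into global equalities of polynomial values, and the nowhere-zero property (1) is what eliminates the two $V \cup W$-branches of $K$ in this constant subcase.
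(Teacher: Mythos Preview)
Your proof is correct and follows essentially the same approach as the paper: a coordinatewise case analysis of $K$ together with condition (2) to force equalities among the $f_i(b)$. The only organizational difference is that the paper, after obtaining $f(a)=f_1(a)$ from condition (2), revisits the case split once to rule out the $\partial$-branches and conclude $f_1(a)=f_2(a)=f_3(a)$ in a single stroke; this avoids your separate treatment of the ``$f_1,f_2$ both constant'' subcase, but the underlying idea is identical.
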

\begin{proof}
Let $f_1(x), f_2(x), f_3(x)$ be as in the statement of the lemma, and
let $f(x) = K(f_1(x),f_2(x),f_3(x))$. We will show that $f(x)$ is either
constant or $f^{-1}(C)\subseteq C$. Suppose that $a\in B$ and $f(a) \in C$.
Since
\begin{align*}
  f(a) 
  & = K(f_1(a),f_2(a),f_3(a)) \\
  & = (\partial f_1(a) \wedge f_2(a))
    \vee (\partial f_1(a) \wedge \partial f_2(a) \wedge f_3(a)) 
    \vee (f_1(a)\wedge f_2(a)\wedge f_3(a)),
\end{align*}
and $f(a)$ is nowhere $0$ and $\A'(\T)$ is flat, for each $l\in L$
\begin{itemize}
  \item $\partial f_1(a)(l) = f_2(a)(l) = f(a)(l)$, or
  \item $\partial f_1(a)(l) = \partial f_2(a)(l) = f_3(a)(l) = f(a)(l)$, or
  \item $f_1(a)(l) = f_2(a)(l) = f_3(a)(l) = f(a)(l)$.
\end{itemize}
For $b\in \{f_1(a),f_2(a)\}$ it follows from these formulas that $b(l)\in
\{f(a)(l),\partial f(a)(l)\}$ for all $l\in L$. Hypothesis $(2)$ implies
that $f(x) = f_1(a) = f_2(a)$, and then $f(a)\leq f_3(a)$. Since $f(a)$ is
nowhere $0$, we have $f(a) = f_3(a)$ as well. Thus $f_i(a)\in C$ for $i\in
\{0,1,2\}$. Thus, finally, if $a\not\in C$, then each $f_i$ is constant on
$B$, and then so is $f$.
\end{proof}
%==========================================================================}}}

%=========================================================================={{{
\begin{defn} \label{defn:0 absorbing}
Let $\mathcal{C}$ be a class of algebras of the same type whose reduct to
$\{0,\wedge\}$ is a meet semilattice. $\mathcal{C}$ is said to be
\emph{$0$-absorbing} if for every fundamental operation $F(x_1,\ldots,x_n)$,
every $\A\in \mathcal{C}$, and every $a_1,\ldots,a_n\in A$,
\[
  0\in \{a_1,\ldots,a_n\} 
  \qquad \text{implies} \qquad
  F(a_1,\ldots,a_n) = 0.
\]
$\mathcal{C}$ is said to \emph{commute with $\wedge$} if for every
fundamental operation $F$ of some arity $n$,
\[
  \mathcal{C} \models F(x_1,\ldots, x_n) \wedge F(y_1,\ldots,y_n) 
  \approx F(x_1\wedge y_1, \ldots, x_n\wedge y_n).
\]
\end{defn}
%===========================================================================}}}

We now enumerate the needed additions to McKenzie's proofs in papers
\cite{McKenzieResidualBounds} and \cite{McKenzieResidualBoundNotComp}. To
avoid needlessly long definitions and discussions, the additions will be
presented assuming that the reader has the appropriate paper on hand to
reference. Overall, we will proceed through the main argument in
\cite{McKenzieResidualBoundNotComp} and divert to
\cite{McKenzieResidualBounds} when the main argument makes reference to it.
\begin{enumerate}
  \item In general, we note that $K$ is monotonic, and if $\A\in
  \V(\A'(\T))$ is flat and $\A\models S_2(u,v,x,y,z) \approx 0$, then
  $\A\models J'(x,y,z) \approx K(x,y,z) \approx x\wedge y\wedge z$. This is
  Lemma \ref{lem:flat and S2=0 then K=J'=xyz}.
  
  \item \label{enum: mckenzie add S_Z} In
  \cite{McKenzieResidualBoundNotComp} in the proof of Lemma $4.1$, elements
  $\alpha_n$ and $\beta_n$ of $\A'(\T)^{\ZZ}$ are defined as
  \[
    \alpha_n(k) = \begin{cases}
      1 & \text{if } k<n, \\
      H & \text{if } k=n, \\
      2 & \text{if } k>n,
    \end{cases}
    \qquad \text{and} \qquad
    \beta_n(k) = \begin{cases}
      C & \text{if } k<n, \\
      D & \text{if } k\geq n.
    \end{cases}
  \]
  Let $\Gamma$ be the subuniverse of the algebra generated by these
  elements, $\Sigma$ the set of all configuration elements generated by the
  $\alpha_n$ (that is, the set of all nowhere $0$ outputs of
  $\mathcal{L}\cup\mathcal{R}\cup \{I\}$), and $\Gamma_0$ the subset of
  $\Gamma$ consisting of elements that are $0$ at some coordinate.  It is
  necessary to prove that the set 
  \[
    \Gamma' = \Gamma_0 \cup \Sigma \cup \{\alpha_n, \beta_n \mid n\in \ZZ\}
  \]
  is closed under the operation $K$. By construction, if $u\in
  \Gamma'\setminus \Gamma_0$, then for each $l\in L$, $u(l)$ cannot be a
  barred element (e.g. $\partial C$, $\partial D$, $\partial C_{ir}^s$,
  etc.). From the definition of $K$, we have that if $a,b,c\in
  \Gamma'\setminus \Gamma_0$, then $K(a,b,c) = a\wedge b\wedge c\in \Gamma$.
  Thus
  $K(\Gamma'\setminus\Gamma_0,\Gamma'\setminus\Gamma_0,\Gamma'\setminus\Gamma_0)
  \subseteq \Gamma'$.  The set $\Gamma_0$ contains elements that have a
  value of $0$ at some coordinate. Since $K$ is $0$-absorbing in its first
  and second coordinates, $K(\Gamma_0,\Gamma',\Gamma') \cup
  K(\Gamma',\Gamma_0,\Gamma')\subseteq \Gamma_0$. Furthermore, if $a,b\in
  \Gamma'\setminus \Gamma_0$ and $c\in \Gamma_0$ then since $a(l)\neq
  \partial b(l)$ for any $l\in \ZZ$ ($\Gamma'\setminus \Gamma_0$ contains no
  barred elements), we have that $K(a,b,c) = a\wedge b\wedge c$, so in this
  case $K(a,b,c)\in \Gamma_0$ since $w\in \Gamma_0$. Therefore
  $K(\Gamma'\setminus\Gamma_0,\Gamma'\setminus\Gamma_0,\Gamma_0)\subseteq
  \Gamma_0$. We have now show that $K(\Gamma',\Gamma',\Gamma')\subseteq
  \Gamma'$, so we are done.

  \item The proof of Lemma $5.3$ in \cite{McKenzieResidualBoundNotComp}
  needs only a few added words at the end of the first full paragraph on
  page 41 to demonstrate that our new operation $K$ can be dealt with the
  same way as the operations $J$ and $J'$ are handled in this proof.

  \item Prior to the statement of Lemma $5.5$ in
  \cite{McKenzieResidualBoundNotComp}, it is written that the lemma is a
  restatement of Lemmas $6.7$-$6.9$ of \cite{McKenzieResidualBounds}. All of
  these lemmas go through without modification, except for Lemma $6.8$.
  Lemma $6.8$ concerns itself with a subalgebra $\m{B}$ of $\A'(\T)^L$ and a
  subset $B_1$ of $B$ defined by
  \[
    B_1 = \big\{ u\in B \mid u = p 
    \text{ or } x_0 x_1\cdots x_n = p 
    \text{ and } u\in \{x_0\,\ldots, x_n\} \big\}
  \]
  ($p$ is a fixed element of $\B$ that has the property, amongst many
  others, of being nowhere $0$). The product in $x_0 x_1\cdots x_n$ in the
  definition of $B_1$ associates to the right. At the very start of the proof
  of Lemma $6.8$, induction on the complexity of polynomials is used to
  prove that if $u\in B$ and $f(u)\in B_1$ then $f(x)$ is either constant or
  $u\in B_1$. Lemma $6.6$ in \cite{McKenzieResidualBounds} states that $B_1$
  consists of elements that are nowhere $0$ and having the property that if
  $u\in B_1$ and $v\in B$ are such that $u(l) \in\{v(l),\partial v(l)\}$ for
  all $l\in L$, then $u = v$. Taking $C=B_1$ in Lemma \ref{lem:A(T) res
  small} above, the inductive step of the proof for the $K$ operation
  follows.

  \item In \cite{McKenzieResidualBoundNotComp} in the proof of Lemma $5.7$
  part (iii), induction on the complexity of polynomials is used to prove
  that if $f(x)$ is a non-constant polynomial of $\B$ and $f(u)\in B_1$ for
  some $u\in B$, then $u\in B_1$. This is the same argument that appears in
  the previous item above.

  \item A consequence of these lemmas is that every large SI of
  $\V(\A'(\T))$ is flat and models $S_i(\overline{u},x,y,z)\approx 0$ for
  every $i\in \{0,1,2\}$. By Lemma \ref{lem:flat and S2=0 then K=J'=xyz}, we
  have that $K(x,y,z) \approx x\wedge y\wedge z$ in large SI's (in fact, in
  large SI's $K(x,y,z) \approx x\wedge y\wedge z \approx J(x,y,z)$).
  Therefore, the addition of the $K$ operation does not change the structure
  of the large SI's of $\V(\A'(\T))$.
\end{enumerate}

This completes the changes that are needed to adapt McKenzie's description
of large SI algebras in $\V(\A(\T))$ to $\V(\A'(\T))$. We will now give an
explicit description of exactly what these algebras look like.
%==========================================================================}}}1

%=========================================================================={{{1
\section{Subdirectly irreducible algebras in $\V(\A'(\T))$} \label{sec:SIs}
Define terms $e_0, e_1, e_2$ in $\V(\A'(\T))$ by
\begin{equation} \label{defn:e_i} \begin{aligned}
  e_0(m,x) & = S_0(m,x,x,x), & e_2(m,n,x) & = S_2(m,n,x,x,x), \\
  e_1(m,x) & = S_1(m,x,x,x).
\end{aligned} \end{equation}
The argument in the previous section shows that large SI's model
$e_i(\overline{y},x)\approx 0$ for all $i\in \{0,1,2\}$. The small
subdirectly irreducible algebras break into two categories: those that
satisfy $\exists \overline{n} [ e_i(\overline{n},x)\approx x ]$ for some
$i\in \{0,1,2\}$, and those that do not (in which case they satisfy
$e_i(\overline{y},x) \approx 0$ for all $i\in \{0,1,2\}$). As we will see in
Lemma \ref{lem:dpsc all factors}, all SI algebras that do model
$\exists\overline{n} [ e_i(\overline{n},x)\approx x ]$ for some $i\in
\{0,1,2\}$ have J\'{o}nsson polynomials and are thus congruence
distributive, and by an argument due to Baker and Wang \cite{BakerWangDPSC},
these algebras will also have DPSC.

There are only three different isomorphism types for small SI algebras
satisfying $e_i(\overline{y},x) \approx 0$ for all $i\in \{0,1,2\}$. Two of
these small SI's are subalgebras of $\A'(\T))$, and the remaining one is the
$4$-element quotient
\begin{equation} \label{defn:W}
  \W = \left<H,C\right>/\Cg(M_1^0,0) = \{0,H,C,D,M_1^0\}/\Cg(M_1^0,0)
\end{equation}
(this will be proved in Lemma \ref{lem:small SI's e_i=0}). The fundamental
operations of $\A'(\T)$ are all identically $0$ in $\W$ except for $\wedge$,
which makes $\left< W;\wedge\right>$ a flat semilattice, and the following
operations:
\begin{gather*} 
  \begin{aligned}
    & x\cdot y = 0         && \text{except for} && H\cdot C = D,   \\
    & T(w,x,y,z) = 0       && \text{except for} && T(H,C,H,C) = D,
  \end{aligned} \\
  J(x,y,z) = x\wedge y, 
    \quad \text{and} \quad J'(x,y,z) = K(x,y,z) = x\wedge y \wedge z.
\end{gather*}

%==========================================================================={{{
\begin{lem} \label{lem:small SI's e_i=0}
Let $\B\in \HS(\A'(T))$ be nontrivial and subdirectly irreducible and such
that $\B\models e_i(\overline{y},x)\approx 0$ for all $i\in \{0,1,2\}$. Then
$\B$ is isomorphic to the two element subalgebra $\{0,C\} \leq \A'(\T)$, the
three element subalgebra $\{0,H,M_1^0\} \leq \A'(\T)$, or to the four
element quotient $\W$.
\end{lem}
\begin{proof}
We will first consider subalgebras. Suppose that $\B\leq \A'(\T)$ is SI.
Since $\B\models S_i(\overline{n},x,x,x)\approx 0$ for all $i$, we have
$(\{1,2\}\cup V_0)\cap B = \emptyset$ and $\{x,\partial x\}\not\subseteq B$
for $x\in W\cup V$ (i.e. the ``bar-able" elements of $A'(\T)$). It follows
that all
fundamental operations are identically $0$ except for $\wedge$ and
\begin{equation} \label{eqn:small SI's e_i=0 polys} \begin{gathered} 
  \begin{aligned}
    & I(x) = 0 && \text{except for} && I(H) = M_1^0, \\
    & x\cdot y = 0 && \text{except for} && H\cdot C = D 
      \text{ and } H\cdot \partial C = \partial D,
  \end{aligned} \\
  J(x,y,z) = x\wedge y, \\
  K(x,y,z)=J'(x,y,z) = x\wedge y\wedge z, \text{ and} \\
  T(w,x,y,z) = (w\wedge y)\cdot (x\wedge z).
\end{gathered} \end{equation}

There are two cases depending upon whether or not $H$ is an element of $B$.
For the first case, suppose that $H\not\in B$. Then $x\cdot y = T(w,x,y,z) =
I(x) = 0$, so $\B$ is a flat semilattice. It follows that if $x,y\in B$ are
distinct and nonzero, then $\Cg^\B(x,0)$ and $\Cg^\B(y,0)$ are distinct and
cover $\0$ in $\Con(\B)$, and hence $\B$ is not subdirectly irreducible.
Therefore $B = \{0,x\}$, so $\B$ is isomorphic to the subalgebra $\{0,C\}$.

For the second case, suppose that $H\in B$. Then $I(H) = M_1^0\in B$ as
well. If $F(x)$ is a fundamental translation of $\B$, then $F(M_1^0)=M_1^0$
or $F(M_1^0)=0$ (see the description of the fundamental operations above).
Two consequences of this are that $\Cg^\B(M_1^0,0)$ is the monolith of $\B$
and that if $\Cg^{\B}(a,0) = \Cg^{\B}(M_1^0,0)$ then $a = M_1^0$.

We will now show that $B = \{0,H,M_1^0\}$. Suppose that $x\in B\setminus
\{0,H,M_1^0\}$. If $x = C$, then $H\cdot C = D$, so $D\in B$ as well. An
argument similar to the one in the previous paragraph will show that
$\Cg^\B(D,0)$ covers $\0$. Likewise, if $x = \partial C$, then
$\Cg^\B(\partial D,0)$ covers $\0$. Both of these possibilities are
contradictions. If $x\not\in \{C,\partial C\}$ then $\Cg^\B(x,0)$ also
covers $\0$, again contradicting $\B$ being subdirectly irreducible.
Therefore it must be that $B\setminus \{0,H,M_1^0\} = \emptyset$. It follows
that the only subdirectly irreducible subalgebras of $\A'(\T)$ are
isomorphic to either $\{0,C\}$ or $\{0,H,M_1^0\}$. 

We now examine the situation when $\B$ is a proper quotient of a subalgebra
of $\A'(\T)$. Suppose that $\B = \B_1/\theta\in \HS(\A'(\T))$ is subdirectly
irreducible. In the quotient $\B$, the equations \eqref{eqn:small SI's e_i=0
polys} hold by the same argument appearing at the start of the proof. Since
$\A'(\T)$ is a flat semilattice, the only possibly nontrivial class of
$\theta$ is the one containing $0$. As before, we have two cases to
consider, depending on whether $B$ contains a nonzero $H/\theta$. If
$H\not\in B_1$ or $(H,0)\in \theta$, then $\B$ is a subdirectly irreducible
flat semilattice (i.e. all the operations are $0$ except for $\wedge$), so
$\B$ must be isomorphic to the $2$-element subalgebra $\{0,C\}$ of
$\A'(\T)$. 

Suppose now that $H\in B_1$ and $(H,0)\not\in\theta$. There are three cases
to consider:
\begin{itemize}
  \item $(M_1^0,0)\not\in\theta$,
  \item $(M_1^0,0)\in\theta$ and [ $C\not\in B_1$ or $(C,0)\in \theta$ ], or
  \item $(M_1^0,0)\in\theta$ and [$C\in B_1$ and $(C,0)\not\in\theta$ ].
\end{itemize}
If $(M_1^0,0)\not\in\theta$ then $\Cg^\B(M_1^0,0)$ is the monolith of $\B$,
so by the last paragraph $\B$ is isomorphic to the $3$-element subalgebra
$\{0,H,M_1^0\}$ of $\A'(\T)$. If instead $(M_1^0,0)\in \theta$ and [
$C\not\in B_1$ or $(C,0)\in \theta$ ], then $\Cg^\B(H,0)$ must cover $\0$,
so $\B$ is isomorphic to the $2$-element subalgebra $\{0,C\}$. Suppose now
that $(M_1^0,0)\in \theta$ and [ $C\in B_1$ and $(C,0)\not\in \theta$ ]. If
$(D,0)\in\theta$, then both $\Cg^\B(H,0)$ and $\Cg^\B(C,0)$ cover $\0$,
contradicting the subdirect irreducibility. If $(D,0)\not\in\theta$, then
$\Cg^\B(D,0)$ covers $\0$. An argument similar to when $\B$ is a subalgebra
shows that $x\not\in B_1$ or $(x,0)\in \theta$ for all $x\in B_1\setminus
\{0,H,C,D\}$. When this happens $\B$ is isomorphic to the algebra $\W$
described in $\eqref{defn:W}$ above.
\end{proof}
%===========================================================================}}}

Large subdirectly irreducible algebras in $\V(\A'(\T))$ come in two types:
sequential and machine. Both of these types of algebras model the identities
$S_i(\overline{n},x,y,z) \approx 0$, $J(x,y,z) \approx x$, and $J'(x,y,z)
\approx K(x,y,z) \approx x\wedge y \wedge z$. Sequential algebras are
distinguished as additionally modeling the identities $I(x) \approx F(x,y,z)
\approx U_F^\epsilon(w,x,y,z) \approx 0$ for all $F\in \mathcal{L}\cup
\mathcal{R}$ and all $\epsilon\in \{0,1\}$. Machine algebras are
distinguished as modeling the identities $x\cdot y \approx T(w,x,y,z)
\approx 0$.

We begin the description of the sequential algebras by describing an algebra
$\S_{\ZZ}$ in which every sequential algebra is embeddable (but which may
not belong to $\V(\A'(\T))$). The algebra $\S_{\ZZ}$ has underlying set
$S_{\ZZ} = \{0, a_i,b_i \mid i\in \ZZ\}$ and fundamental operations of
$S_{\ZZ}$ are the same as $\A'(\T)$, but are all identically $0$ except for
$\wedge$, $(\cdot)$, $T$, $J$, $J'$, and $K$. The operation $\wedge$ is
defined so that $\left< S_{\ZZ}; \wedge\right>$ is a flat meet semilattice
with bottom element $0$. The operation $(\cdot)$ is defined so that
$a_n\cdot b_{n+1} = b_n$, and $0$ otherwise. The operations $T$, $J$, $J'$,
and $K$ are defined
\begin{align*}
  & J(x,y,z) = x\wedge y, & & J'(x,y,z) = K(x,y,z) = x\wedge y\wedge z, \\
  & T(w,x,y,z) = (w\cdot x) \wedge (y\cdot z).
\end{align*}
Define $\S_{\omega}$ to be the subalgebra of $\S_{\ZZ}$ with universe
$\{0,a_i,b_i\mid i\geq 0\}$, and define $\S_n$ to be the subalgebra of
$\S_{\ZZ}$ with universe $\{0,a_0,b_0,\ldots,a_n,b_n\}$. The algebras
$\S_{\omega}$ and $\S_n$ are subdirectly irreducible, with monoliths
$\Cg(b_0,0)$. With the additions described earlier in section
\ref{sec:modifying mckenzies argument}, McKenzie's argument in
\cite{McKenzieResidualBoundNotComp} proves that $\S_{\ZZ}\in \V(\A'(\T))$ if
and only if $\T$ does not halt, and that $\T$ halts if and only if there is
some maximum $N\in \NN$ such that $\S_{N}\in\V(\A'(\T))$.  $\S_{\ZZ}$,
$\S_{\omega}$, and $\S_n$ for $n\in \NN$ are the sequential algebras, but
only $S_n$ and $\S_{\omega}$ are subdirectly irreducible.

\smallskip

Next, we restate the description of machine algebras given by McKenzie
\cite{McKenzieResidualBoundNotComp}. We begin the description of machine
algebras by defining an algebra (possibly not in $\V(\A'(\T))$) that will
have a quotient isomorphic to our hypothetical machine algebra. Let
$N\subseteq \ZZ$ be a nonempty interval and let $\mathcal{Q} = \left<
\tau,j,\gamma\right>$ be any configuration of the Turing machine $\T$ (here
$\tau$ is the tape function, $j\in N$ is the head position, and $\gamma$ is
the state of the machine). We say that $\mathcal{Q}$ is an initial
configuration if $\tau$ is the blank tape (the tape consisting of all $0$'s,
written as $\tau_0$ below) and $\gamma = \mu_1$ (the starting state). We say
that $\mathcal{Q}$ is a halting configuration if $\gamma = \mu_0$ (the
halting state). Let $\Omega_N$ denote the set of all configurations $\left<
\tau,j,\gamma\right>$ with $j\in N$ and $\tau(\ZZ\setminus N) = \{0\}$.
Write $\mathcal{P}\leq_N \mathcal{Q}$ if there is a finite sequence
$\mathcal{Q} = \mathcal{Q}_0, \ldots, \mathcal{Q}_m = \mathcal{P}$ with
$\mathcal{Q}_i\in \Omega_N$ and such that $\mathcal{Q}_{i+1} =
\T(\mathcal{Q}_i)$.

Let $\Sigma_N = \{ a_n \mid n\in N \}$, and assume that $\Sigma_N$,
$\Omega_N$, and $\{0\}$ are pairwise disjoint. Let $\mathbb{P}_N$ be the
algebra where
\begin{itemize}
  \item the universe is $P_N = \{0\} \cup \Sigma_N\cup \Omega_N$.
  \item the operations $(\cdot), S_0, S_1,S_2,T$ are identically $0$.
  \item $\wedge$ makes $\left< P_N, \wedge \right>$ a flat semilattice.
  \item $J(x,y,z) = x\wedge y$ and $J'(x,y,z) = K(x,y,z) = x\wedge y \wedge
    z$.
  \item $I(a_n) = \left< \tau_0, n, \mu_1\right>\in \Omega_N$ and $I(x) = 0$
    otherwise (here $\tau_0$ is the tape consisting of all $0$'s).
  \item if $F = L_{ir\epsilon}\in \mathcal{L}$ where $\mu_i rsL\mu_j$ is an
    instruction of $\T$ and $\mathcal{Q} = \left< \tau,n+1,\mu_i\right>$ is
    a configuration in $\Omega_N$, then $F(a_n,a_{n+1},\mathcal{Q}) =
    \T(\mathcal{Q})$ provided that $n\in N$, $\T(\mathcal{Q})\in \Omega_N$,
    $\tau(n+1) = r$, and $\tau(n) = \epsilon$. In all other cases $F(x,y,z)
    = 0$.  The case when $F = R_{ir\epsilon}\in \mathcal{R}$ is defined
    analogously.
  \item if $F\in \mathcal{L}\cup \mathcal{R}$ and $n,n+1\in N$, we
    have 
    \[
      U_F^0(a_n,a_{n+1},a_{n+1},x) 
      = F(a_n,a_{n+1},x) 
      = U_F^1(a_n,a_n,a_{n+1},x),
    \]
    and $U_F^j(w,x,y,z) = 0$ otherwise.
\end{itemize}
Next, we describe the congruence of $\mathbb{P}_N$ which we will quotient
by. Assume the set $\Phi\subseteq \Omega_N$ and the element $\mathcal{P}\in
\Phi$ satisfy the following conditions.
\begin{itemize}
  \item For all $\mathcal{Q}\in \Phi$ we have $\mathcal{P}\leq_N
    \mathcal{Q}$.
  \item If $\mathcal{Q}\in \Phi$ and $\mathcal{P}\leq_N \T(\mathcal{Q})$
    then $\T(\mathcal{Q})\in \Phi$.
  \item If $\mathcal{Q}\in \Omega_N$ is an initial configuration and
    $\mathcal{P}\leq_N \mathcal{Q}$ then $\mathcal{Q}\in \Phi$.
  \item If $\mathcal{Q}, \mathcal{Q}'\in \Omega_N$, $\mathcal{Q}'$ is a
    halting configuration, and $\mathcal{Q}'\leq_N \mathcal{Q}$ then
    $\mathcal{Q}\not\in\Phi$.
  \item $|N|>1$ and for every $n\in N$, there is some $\left<
    \tau,n,\gamma\right>\in \Phi$.
\end{itemize}
Define $\Gamma$ to be $(\Omega_N\setminus \Phi)\cup \{0\}$ and let
$\Theta_{(\Phi)}$ be the congruence of $P_N$ whose only nontrivial class is
$\Gamma$. McKenzie gives the following theorem at the end of
\cite{McKenzieResidualBoundNotComp}, which with the addition of the
arguments above still holds for the modified $\A'(\T)$.

\begin{thm}[McKenzie \cite{McKenzieResidualBoundNotComp}]
\label{thm:McKenzie classify large SI's}
$\Theta_{(\Phi)}$ is a congruence relation of $\mathbb{P}_N$ and the algebra
$\mathbb{P}_N/\Theta_{(\Phi)}$ is a subdirectly irreducible algebra that
belongs to $\V(\A'(\T))$. Every large SI in $\V(\A'(\T))$ is either
embeddable in $\S_\omega$ or is isomorphic to $\mathbb{P}_N/\Theta_{(\Phi)}$
for some $N$ and $\Phi$ as above.
\end{thm}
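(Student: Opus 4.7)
The plan is to follow McKenzie's argument in \cite{McKenzieResidualBoundNotComp} essentially verbatim, stopping only at the points where the new $K$ operation could introduce complications, and dispatching those via Lemma \ref{lem:K on large SI} and Lemma \ref{lem:A(T) res small} (the additions listed as items (1)--(5) above). For the first half of the theorem, I would verify that $\Theta_{(\Phi)}$ is a congruence of $\mathbb{P}_N$; every fundamental operation except $K$ is treated exactly as in McKenzie's original proof, and for $K$ we note that $K(x,y,z) = x\wedge y\wedge z$ in $\mathbb{P}_N$ by definition, so $K$ is $0$-absorbing and respects the partition of $P_N$ given by $\Gamma$ together with the singletons outside $\Gamma$. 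The quotient $\mathbb{P}_N/\Theta_{(\Phi)}$ is then SI with monolith $\Gamma/\Theta_{(\Phi)}$ by McKenzie's analysis, which uses conditions (1)--(5) on $\Phi$ to force every nontrivial congruence of the quotient to collapse the image of $\Gamma$.

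To show $\mathbb{P}_N/\Theta_{(\Phi)}\in\V(\A'(\T))$ I would exhibit an embedding into a power of $\A'(\T)$ following McKenzie's coordinate-by-coordinate construction indexed by configurations in $\Phi$. The only new verification required is that $K$, interpreted coordinatewise in the image, coincides with $x\wedge y\wedge z$; but the image contains no barred elements, so the hypothesis of Lemma \ref{lem:K on large SI} is met and the desired identity holds on the image. For the classification half, I would reproduce McKenzie's proof that every large SI in $\V(\A'(\T))$ satisfies $S_i(\overline{u},x,y,z)\approx 0$ for $i\in\{0,1,2\}$, and then split cases on whether the operations $I$ and the elements of $\mathcal{L}\cup\mathcal{R}$ act trivially (sequential type, embeddable in $\S_\omega$) or not (machine type, isomorphic to some $\mathbb{P}_N/\Theta_{(\Phi)}$). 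Once $S_2\approx 0$ holds, Lemma \ref{lem:K on large SI} reduces both $K$ and $J'$ to $x\wedge y\wedge z$ on any large SI, so the addition of $K$ contributes no new content to the structural analysis and McKenzie's sequential/machine dichotomy carries over intact.

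The main obstacle, and the only place demanding genuinely new work, is the $K$-case of the inductions on polynomial complexity inside the SI-structure lemmas, specifically Lemmas 5.3 and 5.7 of \cite{McKenzieResidualBoundNotComp} and Lemma 6.8 of \cite{McKenzieResidualBounds}. This is exactly what Lemma \ref{lem:A(T) res small} is designed to handle: applied with the appropriate choice of the set $C$ at each location (namely $C=\{p\}$ in the first two places and $C=B_1$ in the last), it supplies the missing inductive step whenever the outermost polynomial operation is $K$. Once this is in hand, the remainder of the proof is straightforward bookkeeping through McKenzie's two papers, checking that every argument requiring $J$, $J'$, or $K$ to behave like a meet is in a regime where $S_2\approx 0$ is available so that Lemma \ref{lem:K on large SI} may be invoked.
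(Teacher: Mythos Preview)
Your proposal is correct and matches the paper's approach: the paper does not give a self-contained proof of this theorem but instead declares that McKenzie's original argument goes through once the additions enumerated in items (1)--(5) of Section~\ref{sec:modifying mckenzies argument} are supplied, and your plan is precisely to carry out that program, invoking Lemma~\ref{lem:K on large SI} wherever $S_2\approx 0$ forces $K\approx x\wedge y\wedge z$ and Lemma~\ref{lem:A(T) res small} (with $C=\{p\}$ or $C=B_1$) at the polynomial-complexity inductions in Lemmas~5.3, 5.7, and 6.8 of McKenzie's papers.
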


The above description of the SI algebras in $\V(\A'(\T))$ extends to
$\V(\A'(\T))$ the result that $\kappa(\A'(\T)) < \omega$ if and only if $\T$
halts to $\A'(\T)$.

\begin{thm}
$\kappa(\A'(\T)) < \omega$ if and only if $\T$ halts.
\end{thm}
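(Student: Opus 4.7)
The plan is to read the theorem off directly from Theorem~\ref{thm:McKenzie classify large SI's} together with the description of the sequential and machine type algebras given in the preceding paragraphs; essentially all the work has been done, and the ``proof'' reduces to extracting two cardinality bounds from the classification.

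First I would split any subdirectly irreducible $\B\in \V(\A'(\T))$ into either a \emph{small} SI (lying in $\HS(\A'(\T))$, and hence of cardinality at most $|\A'(\T)|$) or a \emph{large} SI. Small SIs contribute only a finite bound to $\kappa$, so the question reduces entirely to bounding the large SIs. By Theorem~\ref{thm:McKenzie classify large SI's} every large SI is either embeddable into $\S_\omega$ (the sequential type) or isomorphic to some quotient $\mathbb{P}_N/\Theta_{(\Phi)}$ (the machine type), so it suffices to bound the sizes of these two families.

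Suppose first that $\T$ halts. The discussion preceding Theorem~\ref{thm:McKenzie classify large SI's} already establishes that $\S_\omega \notin \V(\A'(\T))$ and that there is a maximum $N_0$ with $\S_{N_0}\in \V(\A'(\T))$, so sequential SIs in $\V(\A'(\T))$ have size at most $|\S_{N_0}|$. For the machine SIs, let $t$ be the number of steps in which $\T$ halts on the blank tape. I would argue that $|N|$ is bounded by a constant depending only on $t$: if $|N|$ were sufficiently large, condition (5) forces the existence of some initial configuration $\mathcal{Q} = \langle \tau_0, j, \mu_1\rangle \in \Omega_N$ with $\mathcal{P}\leq_N \mathcal{Q}$ whose entire $t$-step trajectory stays inside $\Omega_N$; condition (3) places $\mathcal{Q}$ in $\Phi$, iterating condition (2) propagates membership in $\Phi$ along the trajectory, and the terminal halting configuration then violates condition (4). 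Hence $|\mathbb{P}_N/\Theta_{(\Phi)}|$ is uniformly bounded, so $\kappa(\A'(\T)) < \omega$.

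For the converse, suppose $\T$ does not halt. The same preceding discussion gives $\S_{\ZZ}\in \V(\A'(\T))$, and hence $\S_\omega$, which is a subalgebra of $\S_{\ZZ}$, lies in $\V(\A'(\T))$ as well. Since $\S_\omega$ is a countably infinite subdirectly irreducible member of the variety, $\kappa(\A'(\T)) \geq |\S_\omega|^+ > \omega$, so $\kappa(\A'(\T))\not< \omega$. The only substantive obstacle is the uniform bound on $|N|$ for the machine SIs when $\T$ halts, but this is exactly McKenzie's original combinatorial argument in \cite{McKenzieResidualBoundNotComp}; the modifications catalogued in Section~\ref{sec:modifying mckenzies argument} were designed precisely to ensure that this argument transfers verbatim to $\A'(\T)$, so no new difficulty arises.
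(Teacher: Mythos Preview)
Your approach matches the paper's exactly: both derive the theorem from the classification of large SIs (Theorem~\ref{thm:McKenzie classify large SI's}) together with the sequential-algebra dichotomy stated just before it, deferring the combinatorics of the machine case to McKenzie's original paper and relying on Section~\ref{sec:modifying mckenzies argument} to certify that nothing breaks under the added $K$ operation. The paper in fact offers no proof beyond that one-sentence observation, so your roadmap is already more explicit than what appears there; the one place your sketch is loose---bounding $|N|$ rather than $|\Phi|$ directly, and the unjustified $\mathcal{P}\leq_N \mathcal{Q}$ step---is precisely the part you (correctly) hand back to \cite{McKenzieResidualBoundNotComp}.
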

%==========================================================================}}}1

%=========================================================================={{{1
\section{If $\T$ halts} \label{sec:T halts}
The argument to show that $\V(\A'(\T))$ has definable principal
subcongruences if $\T$ halts is quite long and intricate, so we will begin
by giving an description of the different cases.

%==========================================================================={{{
\begin{defn} \label{defn:translation primitive poly}
Let $F(x_1,\ldots,x_n)$ be a fundamental operation of $\A'(\T)$, $\B\in
\V(\A'(\T))$, and $b_1,\ldots b_n\in B$. The polynomials 
\[
  F^{(i)}_{b_1,\ldots,b_n}(x) = F(b_1,\ldots,\stackrel{i}{\hat{x}}, \ldots, b_n)
  \qquad \text{for} \qquad i\in \{1,\ldots,n\}
\]
are called \emph{fundamental translations of $F$}.

If $h(x)$ is a polynomial of $\B$ that is generated under composition by
fundamental translations, we will say that $h(x)$ is a \emph{primitive
polynomial}. The set of all primitive polynomials of $\B$ will be denoted
\[
  \mathcal{P}(\B) = \{ h(x)\in \Pol_1(\B) \mid h(x) \text{ is generated by
  fundamental translations} \}.
\]
When the algebra $\B$ is clear from the context which $h(x)$ is mentioned
in, $\mathcal{P}(\B)$ will be shortened to just $\mathcal{P}$.
\end{defn}
%===========================================================================}}}

If $\B$ is an algebra, then by Maltsev's Lemma, $(c,d)\in \Cg^\B(a,b)$ if
and only if there is a sequence of elements, $c = k_1, k_2, \ldots, k_n =
d$, terms $f_1, \ldots, f_{n-1}$, and constants $\overline{e}\in B^m$ such
that $\{f_i(\overline{e},a),f_i(\overline{e},b)\} = \{k_i,k_{i+1}\}$ for all
$i$. Equivalently, we can take the polynomials $f_i(\overline{e},x)$ to be
generated by fundamental translations.  A \emph{congruence scheme}, as in
\cite{FriedGratzerQuackenbushUniformCong}, is a first-order formula,
$\phi(w,x,y,z)$, that asserts the existence of such elements $k_1, \ldots,
k_n$ and constants $\overline{e}$ for some fixed sequence of terms. A
disjunction of congruence schemes is a \emph{congruence formula}, and every
$(c,d)\in \Cg^\B(a,b)$ satisfies some congruence scheme.  Thus, showing that
a principal congruence is definable can be reduced to finding a finite
number of schemes that fully describe the congruence, and showing that a
variety has definable principal congruences can be reduced to showing that
there is a finite number of congruence schemes that fully describe every
principal congruence in every algebra in the variety.

Begin with an arbitrary $\B\in \V(\A'(\T))$ with subdirect representation
$\B\leq \prod_{l\in L} \C_l$ where each $\C_l$ is subdirectly irreducible.
Recall from the previous section that $e_i(\overline{n},x) =
S_i(\overline{n},x,x,x)$, where $\overline{n} = n_1$ if $i\in \{0,1\}$ and
$\overline{n} = (n_1,n_2)$ if $i = 2$.  The isomorphism types of the $\C_l$
come in $4$ different flavors. If $\C_l$ is subdirectly irreducible, then
exactly one of the following holds:
\begin{enumerate} \renewcommand{\theenumi}{\alph{enumi}}
  \item \label{enum:e_i SIs} $\C_l\models \exists \overline{n} [
    e_i(\overline{n},x)\approx x ]$ for some $i\in \{0,1,2\}$. Any such
    $\C_l$ is necessarily small (i.e. contained in $\mathbf{HS}(\A'(\T))$
    (see Lemma $5.2$ in \cite{McKenzieResidualBoundNotComp}). For fixed
    $i\in \{0,1,2\}$ and $\overline{m}\in B^2\cup B$, every model of
    $e_i(\overline{m},x)\approx x$ is congruence distributive (see the proof
    of Lemma \ref{lem:dpsc all factors}), and the class of these models (for
    a single fixed $i$) has definable principal subcongruences (see Lemma
    \ref{lem:dpsc all factors}).
  \item \label{enum:sporadic SIs} $\C_l$ is small and $\C_l\models
    e_i(\overline{y},x)\approx 0$ for all $i\in \{0,1,2\}$. In this case
    there are just $3$ isomorphism types (see Lemma \ref{lem:small SI's
    e_i=0}).
  \item \label{enum:seq SIs} $\C_l$ is large (i.e. not contained in
    $\mathbf{HS}(\A'(\T))$) and $\C_l\models e_i(\overline{y},x)\approx 0$
    for all $i\in \{0,1,2\}$ and $\C\models I(x)\approx F(x,y,z)\approx 0$
    for all $F\in \mathcal{L}\cup \mathcal{R}$.  In this case, $\C$ is
    \emph{sequential}. SI's of this type were fully described in Section
    \ref{sec:SIs}.
  \item \label{enum:machine SIs} $\C_l$ is large and $\C\models
    e_i(\overline{y},x)\approx 0$ for all $i\in \{0,1,2\}$ and $\C\models
    x\cdot y \approx T(w,x,y,z)\approx 0$, In this case, $\C$ is
    \emph{machine}. SI's of this type were fully described in Section
    \ref{sec:SIs}.
\end{enumerate}

In order to show that $\V(\A'(\T))$ has definable principal subcongruences,
we will produce congruence formulas $\Gamma$ and $\psi$ such that for any
$\B\in \V(\A'(\T))$ and any $a',b'\in B$ there is $(c,d)\in \Cg^{\B}(a',b')$
witnessed by $\Gamma(c,d,a',b')$ and such that the relation ``$(x,y)\in
\Cg^{\B}(c,d)$'' is defined by $\psi(x,y,c,d)$.  Let $\B\leq \prod_{l\in L}
\C_l$ be a subdirect representation of $\B$ by subdirectly irreducible
algebras. The way in which $(c,d)$ is produced depends on the isomorphism
types of the $\C_l$ with $l\in L$ such that $a'(l)\neq b'(l)$. Our first
step is to assume without loss of generality that $a'\not\leq b'$ and to
take $a = a'$ and $b = a'\wedge b'$ so that $b < a$. Let $K = \{l\in L \mid
a(l)\neq b(l)\}$. The case distinctions follow.
\begin{enumerate}
  \item There is $k\in K$ such that $\C_k\models \exists\overline{n} [
    e_i(\overline{n},x)\approx x ]$. These are the SI's described in item
    (\ref{enum:e_i SIs}) above.
  \item The previous case does not hold, and there is $k\in K$ such that
    $\C_k$ is sequential. In this case either a translation of the operation
    $(\cdot)$ will distinguish $a$ and $b$, or $(a(l),b(l))$ lies in the
    monolith of $\C_l$ for all $l\in L$. These are the SI's described in
    item (\ref{enum:seq SIs}) above.
  \item The previous cases do not hold, and there is $k\in K$ such that
    $\C_k$ is machine, and either a translation of one of the operations
    from $\mathcal{L}\cup\mathcal{R}\cup \{I\}$ will separate $a$ and $b$,
    or $(a(l),b(l))$ lies in the monolith of $\C_l$ for all $l\in L$.  These
    are the SI's described in item (\ref{enum:machine SIs}) above.
  \item The previous cases do not hold, so it must be that the only $k\in K$
    are such that $\C_k$ is one of the three small SI's that satisfy
    $e_i(\overline{n},x)\approx 0$ for all $i\in \{0,1,2\}$. These are the
    SI's described in item (\ref{enum:sporadic SIs}) above.
\end{enumerate}
We begin the proof for Case $1$ with a slightly specialized version of a
theorem from Baker and Wang \cite{BakerWangDPSC}.

%==========================================================================={{{
\begin{lem}\label{lem:jonsson polys dpsc}
Let $\V$ be a locally finite variety and let 
\[
  P(\overline{c}) = \{ p_j(\overline{c},x_1,x_2,x_3) \mid 1\leq j\leq K \}
\]
be terms in $\V$ with (a fixed number of) constant symbols $\overline{c}$.
Suppose that $J(\overline{c})$ is the set consisting of the J\'{o}nsson
identities for the polynomials $P(\overline{c})$ in the variables $\{x_1,
x_2, x_3\}$. Then the class
\[
  \M = \Mod_{\V}(\exists \overline{c} \; J(\overline{c}))
    = \{ \B\in \V \mid \B\models \exists \overline{c} \; J(\overline{c}) \}
\]
has definable principal subcongruences if $\kappa(\V) = N < \omega$.
\end{lem}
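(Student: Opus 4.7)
The plan is to adapt Baker and Wang's DPSC proof for congruence distributive varieties \cite{BakerWangDPSC} to this parameterized setting. The key observation is that if $\B \in \M$ and $\overline{e} \in B$ witnesses $J(\overline{c})$, then the polynomials $q_j(x_1, x_2, x_3) := p_j(\overline{e}, x_1, x_2, x_3)$ satisfy the J\'onsson identities on $\B$. These behave exactly like genuine J\'onsson terms for congruence-theoretic purposes on $\B$, so the standard J\'onsson chain reduction applies even though we only have parameterized Jónsson polynomials rather than genuine Jónsson terms in the clone of $\B$.

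For the witness formula $\Gamma(c, d, a, b)$: given $(a, b)$ with $a \neq b$ in $\B$, I would fix a subdirect representation $\B \leq \prod_{l \in L} \C_l$ by SIs in $\V$. Because $\kappa(\V) = N < \omega$ and $\V$ is locally finite, there are only finitely many isomorphism types of SIs, all of size $< N$. I would pick a factor $\C_l$ where $a(l) \neq b(l)$ and apply a polynomial (depending on the isomorphism type of $\C_l$ and encoded into $\Gamma$ via a finite disjunction) to $(a, b)$ to produce a subpair $(c, d)$ that corresponds to the monolith pair of $\C_l$. Enumerating the finitely many isomorphism types yields a bounded-complexity first-order formula $\Gamma$ witnessing $(c, d) \in \Cg^\B(a, b)$ with $c \neq d$.

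For the congruence formula $\psi(x, y, c, d)$: I would write $\psi$ as $\exists \overline{c} \, [J(\overline{c}) \wedge \Psi_K(\overline{c}, x, y, c, d)]$, where $\Psi_K$ is a finite disjunction of Jónsson-Maltsev congruence schemes. Given $(x, y) \in \Cg^\B(c, d)$, Maltsev's lemma produces a polynomial chain $x = k_0, \ldots, k_m = y$. Using the Jónsson identities for $p_j(\overline{c}, -, -, -)$, each Maltsev step can be reduced to at most $K$ Jónsson-controlled steps of the form $p_j(\overline{c}, k_i, f_i(c), f_i(d))$. Because $(c, d)$ was chosen as a monolith pair of a bounded-size SI factor, the unary polynomials $f_i$ can be drawn from a finite list determined by the SI isomorphism type and $\kappa(\V) = N$, so the total chain length is uniformly bounded and $\Psi_K$ is a finite disjunction of schemes, each of bounded quantifier complexity.

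The main obstacle is ensuring that the parameterization by $\overline{c}$ and the bound arising from $\kappa(\V) = N$ genuinely combine into uniform first-order formulas across all of $\M$. Specifically, one must verify that once we pass from $\Cg^\B(a, b)$ to the subcongruence $\Cg^\B(c, d)$ generated by an SI-monolith pair, the polynomials acting relevantly on this subcongruence are controlled enough (via the Jónsson structure and the residual bound) to be captured by a single finite disjunction of congruence schemes independent of $\B$ and of the pair $(a,b)$. This is the technical heart of the Baker-Wang argument; the parameterized version requires only that the J\'onsson witnesses $\overline{c}$ be existentially quantified at the head of $\psi$, and that one check that a single choice of $\overline{e}$ in $\B$ simultaneously controls all Maltsev chains arising in the proof, which follows from the equational nature of the J\'onsson identities.
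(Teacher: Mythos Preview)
Your observation that the parameterized J\'onsson polynomials $p_j(\overline{e},-,-,-)$ behave like genuine J\'onsson terms on $\B$ is correct and is the starting point of the paper's argument. The gap is in your construction of $\psi$. You claim that because $(c,d)$ is a monolith pair of a bounded-size SI factor, ``the unary polynomials $f_i$ can be drawn from a finite list.'' This does not follow: the $f_i$ are polynomials of $\B$, not of the factor, and $\B$ may be infinite. Even if each coordinate projection of $f_i$ is one of finitely many functions on $\C_l$, there is no bound on the term complexity needed to realize a given tuple of coordinate functions as a single polynomial of $\B$. The J\'onsson reduction replaces Maltsev chains by J\'onsson chains but does not by itself bound their length; you are essentially asserting DPC for these special pairs, which is what must be proved. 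Your $\Gamma$-construction (a single polynomial moving $(a,b)$ to a monolith pair of one chosen factor) also does not control what happens on the other coordinates where $c(l)\neq d(l)$, and that coordinate uniformity is exactly what the $\psi$-argument needs.

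The paper avoids this by working in bounded-size subalgebras rather than in $\B$. Given $(a,b)$, choose a factor $\C_k$ with $a(k)\neq b(k)$ and $|C_k|$ \emph{maximal}, and set $\C=\langle a,b,\overline{c},s_1,\ldots,s_M\rangle$ where the $s_j$ are preimages of the elements of $\C_k$. Local finiteness and $\kappa(\V)<\omega$ bound $|\C|$ uniformly, so congruences in $\C$ are described by a fixed congruence formula. Since $\overline{c}\in\C$, the algebra $\C$ is congruence distributive; take $(c,d)$ to generate the least congruence of $\C$ not below $\ker(\pi_k|_\C)$ (the join-prime complement of a completely meet-irreducible). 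Maximality of $|C_k|$ then forces $\pi_l(\C)\cong\C_k$ on every coordinate with $c(l)\neq d(l)$. For $\psi$, given $(r,s)\in\Cg^\B(c,d)$, form $\D=\langle C\cup\{r,s\}\rangle$, again of uniformly bounded size; congruence distributivity of $\D$ together with this coordinate uniformity gives $(r,s)\in\Cg^\D(c,d)$, which is witnessed by a bounded-length scheme. The ideas missing from your proposal are thus this passage to bounded finitely generated subalgebras and the maximality trick securing coordinate uniformity on the support of $(c,d)$.
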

\begin{proof}
The notable modification of the proof given in \cite{BakerWangDPSC} is at
\eqref{eqn:baker wang mod} below.

Let $\B\in \M$, let $a, b\in B$ be distinct, and fix $\overline{c}\in B^n$
witnessing $\B\models J(\overline{c})$. Let $\B \leq \prod_{l\in L} \C_l$ be
a subdirect representation of $\B$ by subdirectly irreducible algebras such
that whenever $k,l \in L$ and $\C_k \equiv \C_l$ then $\C_k = \C_l$. Since
$\kappa(\V) < \omega$, each $\C_l$ is finite and there are only finitely
many distinct ones. We will construct a finite subalgebra $\C\leq \B$, and
then find a pair $(c,d)\in \Cg^\C(a,b)$ such that $c\neq d$ and
$\Cg^\B(c,d)$ is uniformly definable (i.e. definable in a way that depends
only on $\V$, and not on $\B$, $c$, or $d$).

Choose $k\in L$ such that $a(k)\neq b(k)$ and $|C_k|$ is maximal with this
property. Choose preimage representatives $s_1,\ldots s_M\in B$ of $\C_k$
and let
\begin{equation} \label{eqn:baker wang mod}
  \C = \left<\{a,b,\overline{c} \} \cup \{s_1,\ldots, s_M\} \right>.
\end{equation}
Since $\kappa(\V) = N < \omega$ and $\V$ is locally finite, any such $\C$
has size bounded by a number depending only on $N$ and the number of
constants $\overline{c}$. Since $\C$ has bounded size, congruences are
defined by a finite number of congruence schemes. By construction,
$\pi_k(\C) = \C_k$ and since any subalgebra of $\B$ containing
$\overline{c}$ is congruence distributive (any such subalgebra has
J\'{o}nsson polynomials), $\C$ is congruence distributive.

$\C_k$ is subdirectly irreducible, so $\ker(\pi_k|_\C)$ is completely meet
irreducible in $\text{Con}(\C)$ (the congruence lattice of $\C$). Since $\C$
is congruence distributive, the interval $[0,\ker(\pi_k|_\C)]$ is a prime
ideal and therefore the complement is a filter with a least element, call it
$\alpha$, which is join-prime.  Therefore $\alpha$ is a principal
congruence, say $\alpha = \Cg^\C(c,d)$, and $\alpha$ is the least congruence
not below $\ker(\pi_k|_\C)$. Since $\Cg^\C(a,b)\not\leq\ker(\pi_k|_\C)$, by
minimality of $\alpha$ we have $\alpha = \Cg^\C(c,d)\leq \Cg^\C(a,b)$. By
the previous paragraph, $|C|$ is bounded by a number depending only on $N$
and the number of constants $\overline{c}$. It follows that there is a
congruence formula determined entirely by this bound that witnesses
$(c,d)\in \Cg^\C(a,b)$.

Let $l\in L$ and suppose that $c(l)\neq d(l)$. Then $\Cg^\C(c,d)\not\leq
\ker(\pi_l|_\C)$ and $a(l)\neq b(l)$. By the minimality of $\alpha =
\Cg^\C(c,d)$, it must be that $\ker(\pi_l|_\C)\leq \ker(\pi_k|_\C)$. Hence
there is a surjective mapping
\[
  \pi_l(\C) \cong 
  \C/\ker(\pi_l|_\C) \twoheadrightarrow \C/\ker(\pi_k|_\C) 
  \cong \pi_k(\C) = \C_k.
\]
Now, $\C_k$ was chosen to be maximal such that $a(k)\neq b(k)$, so the
mapping must also be injective since $\C_l$ is finite. Thus $\pi_l(\C) =
\C_k$.

Let $r,s\in B$ be distinct with $(r,s)\in \Cg^\B(c,d)$. We shall construct
a finite $\D$ such that $(r,s)\in \Cg^\D(c,d)$. Let $\D = \left<C \cup
\{r,s\} \right>$. As with $\C$, any such $\D$ has size bounded by a number
depending only on $N$ and the number of constants $\overline{c}$, and so
congruences in $\D$ are defined by a congruence formula determined entirely
by this bound.  Since $\overline{c}\in D^n$, we also have that $\D$ is
congruence distributive. Let $l\in L$. If $c(l)\neq d(l)$ then by the above
paragraph $\pi_l(\D) = \pi_l(\C) = \C_k$, so 
\[
  (r(l),s(l)) \in \Cg^{\pi_l(\C)}(c(l),d(l)) = \Cg^{\pi_l(\D)}(c(l),d(l)).
\]
On the other hand, if $c(l) = d(l)$ then $r(l) = s(l)$, so it follows that
$(r(l),s(l))\in \Cg^{\pi_l(\D)}(c(l),d(l)) = \0_{\pi_l(\D)}$. In either
case, $(r(l),s(l))\in \Cg^{\pi_l(\D)}(c(l),d(l))$ for all $l\in L$. To
complete the proof we need only prove the following claim.

\begin{claim}
Let $\D$ be finite and congruence distributive and let
$\D\leq \prod_{i\in I} \C_i$. Then $(r,s)\in \Cg^\D(c,d)$ if and
only if $(r(i),s(i))\in \Cg^{\pi_i(\D)}(c(i),d(i))$ for all $i\in I$.
\end{claim}
\begin{claimproof}
One direction is clear, since the $i$-th projection
map is a homomorphism. For the other direction, we have
\[
  (r,s)\in \Cg^\D(c,d) \vee \ker(\pi_i)
  \qquad \text{for each } i.
\]
The set $\Gamma = \{\ker(\pi_i)\mid i\in I\}$ of congruences of $\D$ is
finite since $D$ is finite. Let $\Gamma = \{ \ker(\pi_j)\mid j\in J\}$,
where $J$ is a finite subset of $I$. Then by the congruence distributivity
of $\D$,
\begin{align*}
  (r,s) \in \bigwedge_{j\in J} \left( \Cg^\D(c,d) \vee \ker(\pi_j) \right)
  & = \Cg^\D(c,d) \vee \bigwedge_{j\in J} \ker(\pi_j) \\
  & = \Cg^\D(c,d) \vee \bigwedge_{i\in I} \ker(\pi_i) 
  = \Cg^\D(c,d) \vee \0_{\D} \\
  & = \Cg^\D(c,d),
\end{align*}
as claimed.
\end{claimproof}
\end{proof}
%===========================================================================}}}

Let $\V = \V(\A'(\T))$ and define subclasses of $\V$,
\[
  \M_i = \Mod_\V\left( \exists \overline{m} \; \left[
    e_i(\overline{m},x)\approx x\right] \right)
  \qquad \text{for} \qquad i\in \{0,1,2\}.
\]
We will make use of the fact that if $\C$ is subdirectly irreducible, then
either $\C\models \exists \overline{n} [ e_i(\overline{n},x)\approx x ]$ for
some $i\in \{0,1,2\}$ or $\C\models e_i(\overline{y},x)\approx 0$ for all
$i\in \{0,1,2\}$. This fact follows from the description of SI's in Section
\ref{sec:SIs} and from the definition of the $S_i$ and the $e_i$. In the
case where $\C\models \exists \overline{n} [ e_i(\overline{n},x)\approx x
]$, McKenzie \cite{McKenzieResidualBoundNotComp} proves that since $\C$ is
subdirectly irreducible it is necessarily small. The condition 
\[
  \text{``} x\in e_i(\overline{u},B) \text{ for some } i\in\{0,1,2\} \text{
  and some } \overline{u}\in B^2\cup B\text{''} 
\]
and it's negation will be referred to quite often in the upcoming argument,
so we now define an easier way to reference it.

%==========================================================================={{{
\begin{defn}\label{defn:happy}
We will say that $S\subseteq B$ is \emph{unhappy} if 
\[
  \forall i\in \{0,1,2\}\; \forall \overline{u}\in B^2\cup B \left[
  S\not\subseteq e_i(\overline{u},B) \right],
\]
and $S$ is \emph{happy} otherwise. The element $s\in B$ is \emph{unhappy}
(resp. \emph{happy}) if $\{s\}$ is unhappy (resp. happy). The function
$h:B^n\to B$ is \emph{unhappy} (resp. \emph{happy}) if $\Range(h)$ is
unhappy (resp. happy). Note that these definitions depend on the algebra
$\B$, but the particular algebra something is happy or unhappy with respect
to will always be clear from the context. 
\end{defn}
%===========================================================================}}}

Here are some useful (and straightforward) observations about happiness with
respect to an algebra $\B$.
\begin{itemize}
  \item If a set $S$ only contains unhappy elements then this is a stronger
    property than $S$ being unhappy.
  \item The operations $S_i$ for $i\in \{0,1,2\}$ are happy.
  \item If the function $h:B^n\to B$ is $0$-absorbing in the $i$th variable
    position, then it \emph{preserves happiness} in the sense that if
    $a_1,\ldots a_n\in B$ and $a_i$ is happy, then $h(a_1,\ldots,a_n)$ is
    happy as well.
  \item If $c,d\in B$, $d\leq c$, and $\{d,c\}$ is unhappy, then $c$ must be
    unhappy.
\end{itemize}

%==========================================================================={{{
\begin{lem}\label{lem:dpsc all factors}
If $\T$ halts then each $\M_i$ has definable principal subcongruences.
\end{lem}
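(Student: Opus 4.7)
The plan is to apply Lemma~\ref{lem:jonsson polys dpsc} to $\V=\V(\A'(\T))$ with a carefully chosen family of terms; the hypothesis $\kappa(\V)<\omega$ is supplied by the preceding theorem since $\T$ halts. For each $i\in\{0,1,2\}$ I will exhibit five terms with parameter $\overline{m}$ that become J\'onsson terms once the defining identity $e_i(\overline{m},x)\approx x$ of $\M_i$ is imposed, namely
\[
  p_0=x,\quad p_1=S_i(\overline{m},x,y,z),\quad p_2=x\wedge z,\quad p_3=S_i(\overline{m},z,y,x),\quad p_4=z.
\]
The role of the middle term $p_2=x\wedge z$ is to bridge the two halves of the chain: $p_1$ and $p_3$ individually cannot close the gap because, in general, $p_1(\overline{m},x,y,y)=x\wedge y$ while $p_3(\overline{m},x,y,y)=y$, so a meet-term is required between them.

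Before invoking Lemma~\ref{lem:jonsson polys dpsc}, I will establish three identities in $\V$ which I can verify directly on $\A'(\T)$ from the definition of $S_i$:
\[
  S_i(u,x,y,x)\approx S_i(u,x,x,y)\approx e_i(u,x)\quad\text{and}\quad S_i(u,x,y,y)\approx e_i(u,x)\wedge y.
\]
In each case the evaluation in $\A'(\T)$ reduces to an expression of the form $(x\wedge y)\vee(x\wedge x)$ or $(x\wedge y)\vee(x\wedge y)$ when $u$ lies in the appropriate support set for $S_i$, and both sides vanish when it does not; these equalities are therefore identities of $\A'(\T)$ and hence of $\V$.

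Once these are in hand, modulo $e_i(\overline{m},x)\approx x$ the J\'onsson identities for $p_0,\dots,p_4$ follow mechanically: $p_j(\overline{m},x,y,x)=x$ for every $j$ is immediate; $p_0(\overline{m},x,x,y)=x=e_i(\overline{m},x)=p_1(\overline{m},x,x,y)$ is the even step at $j=0$; $p_1(\overline{m},x,y,y)=e_i(\overline{m},x)\wedge y=x\wedge y=p_2(\overline{m},x,y,y)$ is the odd step at $j=1$; $p_2(\overline{m},x,x,y)=x\wedge y=e_i(\overline{m},y)\wedge x=S_i(\overline{m},y,x,x)=p_3(\overline{m},x,x,y)$ is the even step at $j=2$; and $p_3(\overline{m},x,y,y)=S_i(\overline{m},y,y,x)=e_i(\overline{m},y)=y=p_4(\overline{m},x,y,y)$ is the odd step at $j=3$. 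Hence $\M_i\subseteq \Mod_\V(\exists \overline{m}\,J(\overline{m}))$ where $J(\overline{m})$ is the set of J\'onsson identities for $P(\overline{m})=\{p_0,\ldots,p_4\}$, and Lemma~\ref{lem:jonsson polys dpsc} yields DPSC for $\M_i$. The only real point of care is locating the bridging term $p_2=x\wedge z$; after that the verification is routine.
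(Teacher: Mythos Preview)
Your proof is correct and follows essentially the same approach as the paper: you exhibit the identical five-term J\'onsson chain $p_0=x$, $p_1=S_i(\overline m,x,y,z)$, $p_2=x\wedge z$, $p_3=S_i(\overline m,z,y,x)$, $p_4=z$, verify that these satisfy the J\'onsson identities modulo $e_i(\overline m,x)\approx x$, and invoke Lemma~\ref{lem:jonsson polys dpsc} together with $\kappa(\A'(\T))<\omega$. The only cosmetic difference is that you phrase the verification via the auxiliary identities $S_i(u,x,y,x)\approx S_i(u,x,x,y)\approx e_i(u,x)$ and $S_i(u,x,y,y)\approx e_i(u,x)\wedge y$ holding throughout $\V$, whereas the paper passes through the equivalent observation that $e_i(\overline m,x)\approx x$ forces $S_i(\overline m,x,y,z)\approx(x\wedge y)\vee(x\wedge z)$ in $\B$.
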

\begin{proof}
Let $i\in \{0,1,2\}$. We will show that $\M_i$ satisfies the hypotheses of
Lemma \ref{lem:jonsson polys dpsc} and thus has definable principal
subcongruences. Let $\B\in \M_i$. Choose $\overline{m}\in B^2\cup B$
witnessing $\B\models e_i(\overline{m},x)\approx x$. Now, $\B\models
e_i(\overline{m},x)\approx x$ if and only if $\B\models
S_i(\overline{m},x,y,z)\approx (x\wedge y) \vee (x\wedge z)$. Therefore
there exists $\overline{m}\in B^2\cup B$ such that the following 
\begin{align*}
  & p_0(x,y,z) = x, & & p_1(x,y,z) = S_i(\overline{m},x,y,z) 
    = (x\wedge y) \vee (x\wedge z), \\
  & p_2(x,y,z) = x\wedge z, & &  p_3(x,y,z) = S_i(\overline{m},z,y,x) 
    = (y\wedge z) \vee (x\wedge z), \\
  & p_4(x,y,z) = z,
\end{align*}
are polynomials of $\B$ and satisfy the J\'{o}nsson identities. If
$J_i(\overline{m})$ is the set of J\'{o}nsson identities for these
polynomials, then $\M_i\subseteq \Mod_\V(\exists \overline{m} \;
J_i(\overline{m}))$. Since $\T$ halts, $\kappa(\V(\M_i))\leq
\kappa(\A'(\T))< \omega$. By Lemma \ref{lem:jonsson polys dpsc}, it follows
that $\M_i$ has definable principal subcongruences.
\end{proof}
%===========================================================================}}}

Let $\Gamma_0^i(w,x,y,z)$ and $\psi_0^i(w,x,y,z)$ be the congruence formulas
witnessing definable principal subcongruences for $\M_i$. Define
\begin{equation} \label{defn:psi 0}
  \psi_0(w,x,y,z) = \bigvee_{i=0}^2 \psi_0^i(w,x,y,z)
  \quad \text{and} \quad
  \Gamma_0(w,x,y,z) = \bigvee_{i=0}^2 \Gamma_0^i(w,x,y,z).
\end{equation}

%==========================================================================={{{
\begin{thm}\label{thm:dpsc Gamma0 psi0}
The class $\M_0 \cup \M_1 \cup \M_2$ has definable principal subcongruences
witnessed by the congruence formulas $\Gamma_0$ and $\psi_0$.
\end{thm}
\begin{proof}
Since $\Gamma_0^i$ and $\psi_0^i$ are congruence formulas, so are $\Gamma_0$
and $\psi_0$. Let $\Pi_{\psi_0}(x,y)$ be the formula expressing that the
pair $(x,y)$ generates a congruence that is defined by $\psi_0(-,-,x,y)$ in
$\V(\A'(\T))$ (i.e. the formula asserting that $\psi_0(-,-,x,y)$ is an
equivalence relation, is invariant under fundamental translations, and
that $\psi_0(x,y,x,y)$ holds). Since each $\M_i$ has definable principal
subcongruences and $\Gamma_0$ and $\psi_0$ are the disjunctions of the
formulas witnessing DPSC, $\Gamma_0$ and $\psi_0$ witness definable
principal subcongruences for the class $\M_1\cup \M_2\cup \M_3$. 
\end{proof}
%===========================================================================}}}

In symbols, Theorem \ref{thm:dpsc Gamma0 psi0} says
\[
  \M_1\cup \M_2\cup \M_3 \models \forall a,b \left[ a\neq b \rightarrow
  \exists c,d \left[ c\neq d \wedge \Gamma_0(c,d,a,b) \wedge
  \Pi_{\psi_0}(c,d) \right] \right].
\] 
In terms of happiness, Theorem \ref{thm:dpsc Gamma0 psi0} means that if
$\B$ is a happy algebra, then $\B$ has DPSC witnessed by $\Gamma_0$ and
$\psi_0$.

The next $5$ lemmas provide the groundwork for analyzing the polynomials
that make up a hypothetical Maltsev chain. Specifically, they describe the
extent to which the non-$0$-absorbing operations commute with the other
operations.

%==========================================================================={{{
\begin{lem} \label{lem:S_i commutes 0-abs}
Each of the following hold for every algebra $\B\in \V(\A'(\T))$.
\begin{enumerate}
  \item If $f(x)$ is $0$-absorbing, then $g(x) = f(S_j(\overline{n},p,q,x))$
    is happy for all $j\in \{0,1,2\}$, $\overline{n}\in B^2\cup B$, and
    $p,q\in B$.
  \item If $f(x)$ is happy, then there is $j\in \{0,1,2\}$ and
    $\overline{n}\in B^2\cup B$ such that
    \[
      f(x) = S_j(\overline{n},f(x),f(x),f(x)).
    \]
  \item If $f(x)$ is a polynomial, $c,d\in B$, $d\leq c$, and
    $\{f(c),f(d)\}$ is happy, then there is $j\in \{0,1,2\}$ and
    $\overline{n}\in B^2\cup B$ such that the polynomial
    \[
      g(x) = S_j(\overline{n},f(c),f(d),f(x))
    \]
    satisfies $(g(c),g(d)) = (f(c),f(d))$.
\end{enumerate}
\end{lem}
\begin{proof}
We begin with $(1)$. Let $\B\leq \prod_{l\in L} \C_l$ be a subdirect
representation of $\B$ by subdirectly irreducible algebras and define
\[
  I = \{ l\in L \mid \pi_l( S_j(\overline{n},p,q,B) ) \neq \{0\} \} 
  \qquad \text{and} \qquad J = L\setminus I.
\]
Write a typical $y\in B$ as $y = (y_I, y_J)$, where $y_I = \pi_I(y)$ and
$y_J = \pi_J(y)$. Therefore $S_j(\overline{n},y,y,y) = e_j(\overline{n},y) =
(y_I,0)$, so
\begin{align*}
  g(x) 
  & = f(S_j(\overline{n},p,q,x))
  = f \mat{ (p_I\wedge q_I)\vee (p_I\wedge x_{I}) \\ 0_J } \\
  & = \mat{ f((p_I\wedge q_I)\vee (p_I\wedge x_{I})) \\ f(0_J) }
  = \mat{ f((p_I\wedge q_I)\vee (p_I\wedge x_{I})) \\ 0_J } 
  \in e_j(B)
\end{align*}

For $(2)$, say that $f(x)$ is happy because $\Range(f(x)) \subseteq
e_j(\overline{n},B)$ for some $j\in \{0,1,2\}$ and $\overline{n}\in B^2\cup
B$. Observing that $e_j(\overline{n},\B)\models e_j(\overline{n},x) \approx
x$, the conclusion follows.

For $(3)$, say (as in $(2)$) that $\{f(c),f(d)\}$ is happy because
$\{f(c),f(d)\} \subseteq e_j(\overline{n},B)$ for some $j\in \{0,1,2\}$ and
$\overline{n}\in B^2\cup B$. The operations of $\A'(\T)$ are monotonic, so
$f(d)\leq f(c)$. Therefore
\[
  g(c) 
  = S_j(\overline{n},f(c),f(d),f(c)) 
  = (f(c)\wedge f(d))\vee (f(c)\wedge f(c))
  = f(c),
\]
and likewise
\[
  g(d) 
  = S_j(\overline{n},f(c),f(d),f(d)) 
  = (f(c)\wedge f(d))\vee (f(c)\wedge f(d))
  = f(d). 
  \qedhere
\]
\end{proof}
%===========================================================================}}}

%==========================================================================={{{
\begin{lem} \label{lem:S_i J or K poly}
Let $\{c,d\}\subseteq B$ be happy with $d\leq c$ and let $g(x)\in
\mathcal{P}$. Then there are constants $p,q\in B$ such that
\begin{enumerate}
  \item if $\{g(c),g(d)\}$ is happy, then there is $j\in \{0,1,2\}$,
    $\overline{n}\in B^2\cup B$, and a happy polynomial $h(x)\in
    \mathcal{P}$ such that
    \[
      g'(x) = S_j(\overline{n},p,q,h(x))
    \]
    has $(g(c),g(d)) = (g'(c),g'(d))$.
  \item if $\{g(c),g(d)\}$ is unhappy, then there are some fundamental
    translations $F_1, \ldots, F_M$ and a happy polynomial $h(x)\in
    \mathcal{P}$ such that for some choice of operation $G\in \{J,J',K\}$,
    the polynomial
    \[
      g'(x) = F_M\circ\cdots\circ F_1\circ G(p,q,h(x))
    \]
    has $(g(c),g(d)) = (g'(c),g'(d))$ and the set
    \[
      \left\{ F_k\circ\cdots\circ F_1\circ G(p,q,h(c)) 
        \mid 1\leq k\leq M \right\} \cup \left\{ G(p,q,h(c)) \right\}
    \]
    contains only unhappy elements.
\end{enumerate}
\end{lem}
\begin{proof}
Item $(1)$ is a restatement of Lemma \ref{lem:S_i commutes 0-abs}.

Suppose that $\{g(c),g(d)\}$ is unhappy. The polynomial $g(x)$ is primitive,
so there are fundamental translations $F_1, \ldots, F_N$ such that $g(x) =
F_N\circ \cdots\circ F_1(x)$. Define a sequence of polynomials $g_l(x)$ and
elements $c_l, d_l$ by $g_l(x) = F_l \circ \cdots \circ F_1(x)$ and
$(c_l,d_l) = (g_l(c), g_l(d))$ with $(c_0,d_0) = (c,d)$. Choose $L$ maximal
such that $\{c_L,d_L\}$ is unhappy but $\{c_{L-1},d_{L-1}\}$ is happy. Since
$c_L = F_L(c_{L-1})$ and $d_L = F_L(c_{L-1})$, the translation $F_L$ must
map some happy elements to unhappy elements (i.e. it does not preserve
happiness). The only way this can happen is if $F_L$ is non-$0$-absorbing
(by Lemma \ref{lem:S_i commutes 0-abs}, $0$-absorbing functions preserve
happiness).

The only fundamental translations that are not $0$-absorbing are the $S_j$
in the last $2$ variables, and $J$, $J'$, and $K$ in the last variable. If
$F_L$ is such a translation of an $S_j$ operation, then it is happy, which
contradicts the unhappiness of $\{c_L, d_L\}$. Therefore
\[
  F_L(x) \in \left\{ J(p,q,x), J'(p,q,x), K(p,q,x) \right\}
\]
for some $p,q\in B$. Let the happiness of the set $\{c_{L-1}, d_{L-1}\}$ be
witnessed by $\{c_{L-1}, d_{L-1}\} \subseteq e_j(\overline{n},B)$ for some
$j\in \{0,1,2\}$ and $\overline{n}\in B^2\cup B$, and define
\[
  g'(x) = F_M\circ\cdots\circ F_L\circ h(x)
  \qquad \text{where} \qquad
  h(x) = S_j(\overline{n}, c_{L-1}, d_{L-1}, F_{L-1}\circ\cdots\circ
    F_1(x)).
\]
The polynomial $h(x)$ is clearly happy and primitive, and by the maximality
of $L$ the set
\[
  \left\{ F_k\circ\cdots\circ F_{L+1}\circ F_L(c) \mid L+1\leq k\leq N
    \right\}
  = \left\{ c_k \mid L+1\leq k\leq N \right\}
\]
contains only unhappy elements.

The only assertion left to verify is $(g'(c), g'(d)) = (g(c), g(d))$. Since
all operations are monotone and $d\leq c$, we have that $d_{L-1}\leq
c_{L-1}$.  Since $\{c_{L-1}, d_{L-1}\}\subseteq e_j(\overline{n},B)$ and
$d_{L-1}\leq c_{L-1}$,
\begin{align*}
  F_M\circ\cdots\circ F_L\circ h \mat{c \\ d}
  & = F_M\circ\cdots\circ F_L\circ S_j\left( \overline{n}, c_{L-1}, d_{L-1}, 
    F_{L-1}\circ\cdots\circ F_1 \mat{c \\ d} \right) \\
  & = F_M\circ\cdots\circ F_L\circ S_j\left( \overline{n}, c_{L-1}, d_{L-1}, 
    \mat{c_{L-1} \\ d_{L-1}} \right) \\
  & = F_M\circ\cdots\circ F_L\circ \mat{c_{L-1} \\ d_{L-1}} \\
  & = F_M\circ\cdots\circ F_L\circ F_{L-1} \circ\cdots\circ F_1 \mat{c \\ d}
  = g\mat{c \\ d}.
\end{align*}
Therefore $(g'(c), g'(d)) = (g(c), g(d))$. Reindexing $F_M, \ldots
F_{L+1}$ completes the proof of $(2)$.
\end{proof}
%===========================================================================}}}

The next $3$ lemmas quantify the extent to which the unhappy operations $J$,
$J'$, and $K$ in the polynomial $g'(x)$ from the conclusion of Lemma
\ref{lem:S_i J or K poly} ``commute'' with other unhappy fundamental
operations.

%==========================================================================={{{
\begin{lem} \label{lem:J commutes}
Let $F_1, \ldots, F_M$ be fundamental translations, $h(x)$ a happy primitive
polynomial, and $p,q,c,d\in B$ with $d\leq c$ such that the set
\[
  \left\{ F_k\circ\cdots\circ F_1(J(p,q,h(c))) \mid 1\leq k\leq M \right\}
  \cup \left\{ J(p,q,h(c)) \right\}
\]
contains only unhappy elements. If $g(x) = F_M\circ\cdots\circ
F_1\circ J(p,q,h(x))$ then there are constants
$p', q'\in B$ and a happy $h'(x)\in \mathcal{P}$ (actually having
$\Range(h')\subseteq e_2(p',q',B)$) such that the polynomial
\[
  g'(x) = J(p',q',h'(x))
\]
satisfies $(g(c),g(d)) = (g'(c),g'(d))$.
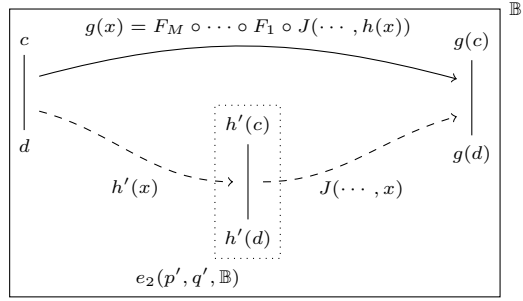
\begin{figure}[H] \centering \begin{tikzpicture}[font=\scriptsize,smooth]
  \node (c) {$c$};
  \node (d) [below=of c] {$d$};
  \node (gc) [right=15em of c] {$g(c)$};
  \node (gd) [below=of gc] {$g(d)$};
  \node (hpc) at ($(c) !0.5! (gd)$) [yshift=-1em] {$h'(c)$};
  \node (hpd) [below=of hpc] {$h'(d)$};

  \draw (c) -- (d) 
    (gc) -- (gd)
    (hpc) -- (hpd);
  \draw[->] ($(c) !1/3! (d) + (0.2,0)$) to[out=15,in=165]
    node[above]{$g(x) = F_M\circ\cdots\circ F_1\circ J(\cdots,h(x))$}
    ($(gc) !1/3! (gd) - (0.2,0)$);
  \draw[->,dashed] ($(c) !2/3! (d) + (0.2,0)$) to[out=-15,in=180]
    node[below,yshift=-0.5em]{$h'(x)$} ($(hpc) !0.5! (hpd) - (0.2,0)$);
  \draw[->,dashed] ($(hpc) !0.5! (hpd) + (0.2,0)$) to[out=0,in=195]
    node[below,yshift=-0.5em]{$J(\cdots,x)$} ($(gc) !2/3! (gd) - (0.2,0)$);
  \draw[dotted] (hpd.south west) node[below,xshift=-1em]{$e_2(p',q',\B)$} 
    rectangle (hpc.north east);
  \draw (current bounding box.north east) node[right]{$\B$} rectangle
  (current bounding box.south west);
\end{tikzpicture}
\caption{Lemma \ref{lem:J commutes} illustration.}
\end{figure}
\end{lem}
\begin{proof}
For convenience, let $r = g(c)$ and $s = g(d)$. We begin by noting that
\[
  J(x,y,z) 
  = (x\wedge y)\vee (x\wedge \partial y\wedge z)
  = (x\wedge y)\vee (x\wedge \partial y\wedge e_2(x,y,z)),
\]
from the definition of $S_2$ (recall $e_2(x,y,z) = S_2(x,y,z,z,z)$) and $J$.
Thus, it will be sufficient to prove that the polynomial $g'(x)$ in the
statement of the lemma satisfies $g'(c) = g(c)$ and $g'(d) = g(d)$ without
any restrictions on the happiness of $h'(x)$.

We will prove the lemma in the restricted setting of $M = 1$ (i.e. when
$g(x) = F\circ J(p,q,h(x))$). Repeated applications of the restricted proof
will then prove the lemma for general $M$. Say that $g(x) = F\circ
J(p,q,h(x))$, where $F$ is a fundamental translation and $g(c) = r$ is
unhappy (and thus $\{r,s\}$ is unhappy). Note that $F$ must be unhappy since
$r = F\circ J(p,q,h(c))$ is unhappy. In particular, this means that $F$
is not a translation of an $S_i$ operation.

Composing the polynomial $J(p,q,h(x))$ with translations of operations from
$\{ (\cdot), I \} \cup \mathcal{L}\cup \mathcal{R}$ produce either constant
polynomials or the composition is commutative (i.e. $F(J(p,q,h(x))) =
J(F(p),F(q),F\circ h(x))$). Thus the claim holds for these operations.

\Case{$\wedge$}
We have that $u\wedge J(p,q,h(x)) = J(p,q,h(x))\wedge u = J(p\wedge
u,q,h(x))$.

\Case{$J$}
The first translation is easy since $J(x,y,z) \wedge w = J(x\wedge w, y, z)$
and $J(x,y,z) \leq J(x,y,x)$. We have
\begin{align*}
  J(J(p,q,h(x)),u,v) 
    & = J(p,q,h(x)) \wedge J(J(p,q,p),u,v) \\
    & = J(p\wedge J(J(p,q,p),u,v),q,h(x)).
\end{align*}
For $g(x) = J(u,J(p,q,h(x)),v)$, let
\[
  g'(x) = J(r,K(r,p,q),S_2(r,K(r,p,q),r,s,g(x))),
\]
where $g(d) = s \leq r = g(c)$. Let $\B \leq \prod_{l\in L} \C_l$ be a
subdirect representation of $\B$ by subdirectly irreducible algebras. We
will show that $g'(c) = r$ and $g'(d) = s$ componentwise. We have that
\begin{multline*}
  g(x) 
  = (u\wedge p\wedge q) \vee (u\wedge p \wedge \partial q \wedge h(x)) \\
    \vee (u\wedge \partial p \wedge \partial q \wedge v) 
    \vee (u\wedge \partial p \wedge \partial\partial q 
    \wedge \partial h(x)).
\end{multline*}
The argument at this point breaks down into many subcases, depending on
whether $r(l)$ is equal to $p(l)$, $q(l)$, $\partial p(l)$, or $\partial
q(l)$ (if $r(l)\neq 0$, then by the flatness of $\C_l$ it must take on one
of these values). The easiest way to keep track of everything is with a
table. Since $r(l) = 0$ implies $g'(x)(l) = 0$ and $s(l) = 0$, we will
assume that $r(l)\neq 0$. For ease of reading, in the table below we will
omit the coordinate when giving values of functions (i.e. ``$(l)$'' will be
omitted from $r(l)$). Additionally, those coordinates which permit $r(l)
\neq s(l)$ have been indicated.
\[ \begin{array}{l|l|l|l|c}
  r                       & K(r,p,q)       & S_2(r,K(r,p,q),r,s,g(x))         & g'(x)                 & r\neq s  \\ \hline
  p = q                   & p = r          & 0                                & r\wedge r             & \text{N} \\
  p = \partial q          & q = \partial r & (r\wedge s) \vee (r \wedge g(x)) & s \vee (r\wedge g(x)) & \text{Y} \\
  \partial p = \partial q & p = \partial r & (r\wedge s) \vee (r \wedge g(x)) & s \vee (r\wedge g(x)) & \text{N} \\
  \partial p = q          & q = \partial r & (r\wedge s) \vee (r \wedge g(x)) & s \vee (r\wedge g(x)) & \text{Y} 
\end{array} \]
Since $r = g(c)$ and $s = g(d)$, we see that $g'(c)(l) = r(l)$ and $g'(d)(l)
= s(l)$, except for possibly when $r(l) = p(l) = q(l)$. In this subcase,
however, from the description of $g(x)$ in terms of $\wedge$ and $\vee$
above we see that $g(x)(l)$ is constant, so it must be that 
\[
  r(l) = g(c)(l) = g(d)(l) = s(l).
\]
Therefore $g'(c) = r$ and $g'(d) = s$, as claimed.

In the subcase where $g(x) = J(u,v,J(p,q,h(x)))$, let
\[
  g'(x) = J(u,v,S_2(u,v,r,s,g(x))).
\]
The note in the first paragraph of the proof shows that $g'(c) = g(c) = r$
and $g'(d) = g(d) = s$.

\Case{$J'$}
The first translation is similar to the case for $\wedge$, and the second
translation reduces to the $J$ case. We have
\begin{align*}
  J'(J(p,q,h(x)),u,v) 
    & = J(p,q,h(x)) \wedge J'(J(p,q,p),u,v) \\
    & = J(J'(J(p,q,p),u,v),q,h(x)), \text{ and} \\
  J'(u, J(p,q,h(x)), v) 
    & = J(J'(u,J(p,q,p),v),J(p,q,h(x)),J'(u,J(p,q,p),v))^\dagger
\end{align*}
[$\dagger$: see Case $J$ above]. For $g(x) = J'(u,v,J(p,q,h(x)))$, let
\[
  g'(x) = J(r,K(r,v,q),S_2(r,K(r,v,q),r,s,g(x))).
\]
An argument similar to the one requiring the table above will show that that
$g'(c) = g(c) = r$ and $g'(d) = g(d) = s$.

\Case{$S_i$}
Since $\{r, s\}$ is unhappy, we can exclude these translations.

\Case{$K$}
For $g(x) = K(J(p,q,h(x)),u,v)$, let
\[
  g'(x) = J(r,K(r,p,q),S_2(r,K(r,p,q),r,s,g(x))).
\]
The approach is to take a subdirect representation of $\B$ and show that
$g'(c) = r$ and $g'(d) = s$ componentwise, as in Case $J$ above. Since
\begin{multline*}
  g(x)
  = (\partial p\wedge \partial q\wedge u) 
    \vee (\partial p\wedge \partial\partial q \wedge \partial h(x) \wedge u)
    \vee (\partial p\wedge \partial q \wedge \partial u\wedge v) \\
  \vee (\partial p \wedge \partial\partial q \wedge h(x) \wedge \partial u 
    \wedge v) 
  \vee (p\wedge q\wedge u\wedge v) 
  \vee (p\wedge \partial q \wedge h(x) \wedge u \wedge v),
\end{multline*}
the $l$-th projection of the polynomial $S_2(r,K(r,p,q),r,s,g(x))$ maps
$c(l)$ to $r(l)$ and $d(l)$ to $s(l)$ unless $r(l) = (p\wedge q\wedge
u\wedge v)(l)$. From the definition of $J$, it therefore follows that $g'(c)
= r$ and $g'(d) = s$.

For the two remaining subcases where either $g(x) = K(u,J(p,q,h(x)),v)$ or
$g(x) = K(u,v,J(p,q,h(x)))$, let
\[
  g'(x) = J(r,K(r,u,q),S_2(r,K(r,u,q),r,s,g(x))).
\]
An argument similar to the previous subcase shows that $g'(c) = r$ and
$g'(d) = s$.

\Case{$T$}
Since $T(w,x,y,z) = T(y,z,w,x)$, we need only consider translations through
the first two coordinates. The equation 
\[
  T(J(p,q,h(x)),u,v,w) = T(p\wedge q, u, v, w)
\]
holds in our variety, so we move on to the subcase $g(x) =
T(u,J(p,q,h(x)),v,w)$. If $g(x) = T(u,J(p,q,h(x)),v,w)$ then
\[
  r = \left[ r\wedge (u\cdot J(p,q,h(x))) \right] \vee \left[ r\wedge
    \partial ( u\cdot J(p,q,h(x)) ) \right]
\]
($x \vee y$ is not a polynomial in our variety, but since $\A'(\T)$ is a
height $1$ semilattice, if $x,y\leq z$ then the quantity $x\vee y$ is
uniquely defined). From the above equation,
\[
  g'(x) = J(r,J(u\cdot p,u\cdot q,u\cdot h(x)),v\cdot w)
\]
has $g'(c) = r$ and $g'(d) = s$ and Case $J$ applies again.

\Case{$F\in \{ U_M^0, U_M^1 \mid M\in \mathcal{L}\cup \mathcal{R} \}$}
The only difficulty in this case arises when $g(x) =
U_M^i(u,v,w,J(p,q,h(x))$. In this subcase, let $t = K(r,M(v,w,p), M(v,w,q))$
and $g'(x) = J(r,t,S_2(r,t,r,s,g(x)))$.
\smallskip

This completes the proof of the restricted setting of $M=1$. By repeatedly
applying this argument, the lemma is proved for general $M$.
\end{proof}
%===========================================================================}}}

The above lemma shows that for $2$ fixed inputs, the $J$ operation can be
taken to commute in a very specific way with the other fundamental
operations. The situation for $J'$ is similar, but much more complicated,
requiring a sequence of inputs and a mix of the $J$ and $J'$ operations.

%==========================================================================={{{
\begin{lem} \label{lem:J' commutes}
Let $F_1, \ldots, F_M$ be fundamental translations, $h(x)$ a happy primitive
polynomial, and $p,q,c,d\in B$ with $d\leq c$ such that the set
\[
  \left\{ F_k\circ\cdots\circ F_1(J'(p,q,h(c))) \mid 1\leq k\leq M \right\}
  \cup \{ J'(p,q,h(c)) \}
\]
contains only unhappy elements. If $g(x) = F_M\circ\cdots\circ F_1\circ
J'(p,q,h(x))$ and $(r,s) = (g(c), g(d))$, then there is a decreasing Maltsev
chain $g(c) = r_1, r_2,\ldots, r_n = g(d)$ connecting $g(c)$ to $g(d)$ with
associated polynomials $g_1(x), \ldots, g_{n-1}(x)$ of the form
\[
  g_k(x) = G_k(p_k,q_k,h_k(x)),
  \quad \text{ where } \quad
  G_k\in \{J,J'\},\ p_k,q_k\in B,\ h_k(x)\in\mathcal{P} \text{ happy}.
\]
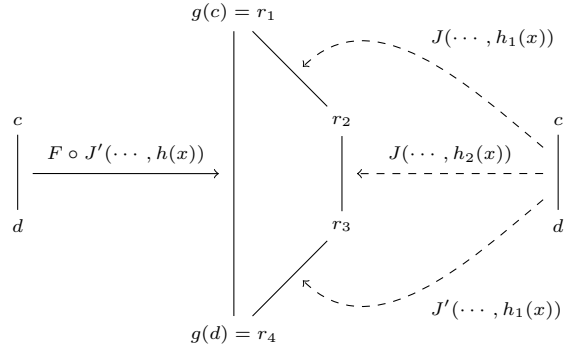
\begin{figure}[H] \centering \begin{tikzpicture}[font=\scriptsize,smooth]
  \node (r1) {$g(c) = r_1$};
  \node (r2) [below=of r1,xshift=4em] {$r_2$};
  \node (r3) [below=of r2] {$r_3$};
  \node (r4) [below=of r3, xshift=-4em] {$g(d) = r_4$};

  \coordinate (t1) at ($(r1) !0.5! (r2)$);
  \coordinate (t1) at (t1|-r2);
  \node (c1) at (t1) [xshift=-10em] {$c$};
  \node (d1) [below=of c1] {$d$};
  \node (c2) at (t1) [xshift=10em] {$c$};
  \node (d2) [below=of c2] {$d$};

  \draw (c1) -- (d1)
    (c2) -- (d2)
    (r1) -- (r2) -- (r3) -- (r4) -- (r1);
  \draw[->] ($(c1) !0.5! (d1) + (0.2,0)$) 
    -- node[above]{$F\circ J'(\cdots,h(x))$} ($(r1) !0.5! (r4) - (0.2,0)$);
  \draw[->,dashed] ($(c2) !1/4! (d2) - (0.2,0)$) to[out=140,in=45]
    node[above right]{$J(\cdots,h_1(x))$} ($(r1) !0.5! (r2) + (0.2,0)$);
  \draw[->,dashed] ($(c2) !0.5! (d2) - (0.2,0)$) --
    node[above]{$J(\cdots,h_2(x))$} ($(r2) !0.5! (r3) + (0.2,0)$);
  \draw[->,dashed] ($(c2) !3/4! (d2) - (0.2,0)$) to[out=220,in=-45]
    node[below right]{$J'(\cdots,h_1(x))$} ($(r3) !0.5! (r4) + (0.2,0)$);
\end{tikzpicture}
\caption{Lemma \ref{lem:J' commutes} illustration.}
\end{figure}
\end{lem}
\begin{proof}
For convenience, let $r = g(c)$ and $s = g(d)$. As in Lemma \ref{lem:J
commutes}, we will prove the claim in the restricted setting of $M=1$ (i.e.
when $g(x) = F\circ J'(p,q,h(x))$). Repeated applications on the proof in
the restricted setting and of Lemma \ref{lem:J commutes} will prove the
lemma for general $M$. Say that $g(x) = F\circ J'(p,q,h(x)))$, where $F$ is
a fundamental translation and $g(c) = r$ is unhappy (and thus $\{r,s\}$ is
unhappy). Note that $F$ must be unhappy since $r = F\circ J'(p,q,h(c)))$ is
unhappy. In particular, this means that $F$ is not a translation of an $S_i$
operation.

Composing the polynomial $J'(p,q,h(x))$ with translations of operations from
$\{ (\cdot), I \} \cup \mathcal{L}\cup \mathcal{R}$ produce either constant
polynomials or the composition is commutative (i.e.  $F\circ J'(p,q,h(x))) =
J'(F(p),F(q),F\circ h(x))$). Since these operations are $0$-absorbing, they
are happiness preserving, and the claim holds for them.

\Case{$\wedge$}
We have that $u\wedge J'(p,q,h(x)) = J'(p,q,h(x))\wedge u = J'(p\wedge
u,q,h(x))$.

\Case{$J$}
The first translation is easy since $J(x,y,z) \wedge w = J(x\wedge w, y, z)$
and $J'(x,y,z) \leq J'(x,y,x)$. We have
\begin{align*}
  J(J'(p,q,h(x)),u,v) 
    & = J'(p,q,h(x)) \wedge J(J'(p,q,p),u,v) \\
    & = J'(p\wedge J(J'(p,q,p),u,v),q,h(x)).
\end{align*}
For $g(x) = J(u,J'(p,q,h(x)),v)$ we must introduce a new ``link'' in our
Maltsev chain. Let
\begin{align*}
  g_1(x) & = J(r,K(r,p,q),S_2(r,K(r,p,q),r,s,g(x))) & & \text{and} \\
  g_2(x) & = J'(t_1,K(t_1,p,q),h(x)), & & \text{where } t_1 = g_1(d)
\end{align*}
(recall that $r = g(c)$ and $s = g(d)$). We have that
\begin{multline*}
  g(x)
  = (u\wedge p\wedge q\wedge h(x) \wedge v) 
    \vee (u\wedge p\wedge \partial q\wedge v) \\
    \vee (u\wedge \partial p\wedge \partial q\wedge \partial h(x))
    \vee (u\wedge \partial p\wedge\wedge q).
\end{multline*}
The argument at this point breaks down into many subcases, depending on
whether $r(l)$ is equal to $p(l)$, $q(l)$, $\partial p(l)$, or $\partial
q(l)$ (if $r(l)\neq 0$, then by the flatness of $\C_l$ it must take on one
of these values). The easiest way to keep track of everything is with a
table. Since $r(l) = 0$ implies $g'(x)(l) = 0$ and $s(l) = 0$, we will
assume that $r(l)\neq 0$. For ease of reading, in the table below we will
omit the coordinate when giving values of functions (i.e. ``$(l)$'' will be
omitted from $r(l)$). Additionally, those coordinates which permit $r(l)
\neq s(l)$ have been indicated.
\[ \begin{array}{l|l|l|l|c}
  r                       & K(r,p,q)       & S_2(r,K(r,p,q),r,s,g(x))         & g_1(x)                & r\neq s  \\ \hline
  p = q                   & p = r          & 0                                & r\wedge r             & \text{Y} \\
  p = \partial q          & q = \partial r & (r\wedge s) \vee (r \wedge g(x)) & s \vee (r\wedge g(x)) & \text{N} \\
  \partial p = \partial q & p = \partial r & (r\wedge s) \vee (r \wedge g(x)) & s \vee (r\wedge g(x)) & \text{Y} \\
  \partial p = q          & q = \partial r & (r\wedge s) \vee (r \wedge g(x)) & s \vee (r\wedge g(x)) & \text{N} 
\end{array} \]
Since $r = g(c)$ and $s = g(d)$, we have that $g_1(c) = r$, and $t_1(l) =
g_1(d)(l) = s(l)$ except for possibly when $r(l) = p(l) = q(l)$. It follows
(by flatness) that $s \leq t_1 \leq r$. We will now show (with another
similar table) that $g_2(c) = t_1$ and $g_2(d) = s$. The first column of the
table below corresponds to the $2$nd-to-last column of the table above
evaluated at $x = d$.
\[ \begin{array}{l|l|l|c}
  t_1 = g_1(d)           & K(t_1,p,q)                          & g_2(x)         & t_1\neq s \\ \hline
  r = p = q              & p = t_1                             & t_1\wedge h(x) & \text{Y}  \\
  r = s = p = \partial q & q = \partial t_1                    & t_1            & \text{N}  \\
  s                                       & (p\wedge \partial s) = \partial t_1 & t_1            & \text{N}  \\
  r = s = \partial p = q & q = \partial t_1                    & t_1            & \text{N}
\end{array} \]
From the table we can see that $g_2(c)(l) = t_1(l)$ in all subcases except for
possibly when $r(l) = p(l) = q(l)$. In this event, from the definition of
$g(x)$ we have that $r(l) = h(c)(l)$, so $g_2(c)(l) = t_1(l)$ (the previous
table indicates that $t_1(l) = r(l)$ when $r(l) = p(l) = q(l)$). Therefore
$g_2(c) = t_1$. Since $t_1(l)$ differs from $s(l)$ only when $r(l) = p(l) =
q(l)$, and since in this subcase $h(d)(l) = s(l)$ (from the definition of
$g(x)$ at the start of the case), it follows that $g_2(d) = s$.

In the case where $g(x) = J(u,v,J'(p,q,h(x)))$, let
\[
  g_1(x) = J(u,v,S_2(u,v,r,s,h(x))).
\]
An argument similar to the case for $J(\cdots,J(\cdots,h(x)))$ in Lemma
\ref{lem:J commutes} will show that $g_1(c) = g(c) = r$ and $g_1(d) = g(d) =
s$.

\Case{$J'$}
We have
\begin{align*}
  J'(J'(p,q,h(x)),u,v) 
  & = J'(p,q,h(x)) \wedge J'(J(p,q,p),u,v) \\
  & = J'(J'(J(p,q,p),u,v),q,h(x)).
\end{align*}
For $g(x) = J'(u,J'(p,q,h(x)),v)$ we must again introduce a new ``link'' in
our Maltsev chain. Let
\begin{align*}
  g_1(x) & = J(r,K(r,p,q),S_2(r,K(r,p,q),r,s,g(x))) & &  \text{and} \\
  g_2(x) & = J'(t_1,K(t_1,p,q),h(x)), & & \text{where } t_1 = g_1(d).
\end{align*}
An argument similar to the corresponding subcase of Case $J$ will show that
$g_1(c) = r$, $g_1(d) = g_2(c) = t_1$, and $g_2(d) = s$. For $g(x) =
J'(u,v,J'(p,q,h(x)))$, let
\[
  g_1(x) = J'(r,K(r,v,q),h(x)).
\]
An argument similar to the one at the start of Case $J$ above will show that
$g_1(c) = r$ and $g_2(d) = s$.

\Case{$S_i$}
Since $\{r,s\}$ is unhappy, we can exclude these translations.

\Case{$K$}
For $g(x) = K(J'(p,q,h(x)),u,v)$, let
\begin{align*}
  g_1(x) & = J(r,K(r,p,q),S_2(r,K(r,p,q),r,s,g(x))) & \text{and} && \\
  g_2(x) & = J'(t_1,K(t_1,p,q),h(x)), & \text{where } t_1 = g_1(d).
\end{align*}
An argument similar to the one in Case $J$ requiring the tables shows that
$g_1(c) = r$, $g_1(d) = g_2(c) = t_1$, and $g_2(d) = s$. For the two
remaining subcases where we have either $g(x) = K(u,J'(p,q,h(x)),v)$ or
$g(x) = K(u,v,J'(p,q,h(x)))$, let
\begin{align*}
  g_1(x) & = J(r,K(r,u,p),S_2(r,K(r,u,q),r,s,g(x))) & \text{and} && \\
  g_2(x) & = J'(t_1,K(t_1,u,q),h(x)), & \text{where } t_1 = g_1(d).
\end{align*}
An argument similar to the one using the tables above will show that $g_1(c)
= r$, $g_1(d) = g_2(c) = t_1$, and $g_2(d) = s$.

\Case{$T$}
Since $T(w,x,y,z) = T(y,z,w,x)$, we need only consider translations through
the first two coordinates. If $g(x) = T(J'(p,q,h(x)),u,v,w)$, then 
\[
  r = \left[ r\wedge (J'(p,q,h(x))\cdot u) \right] \vee \left[ r\wedge
  \partial ( J'(p,q,h(x))\cdot u ) \right].
\]
Therefore
\[
  g'(x) = J'(r,J'(p\cdot u,q\cdot u,h(x)\cdot u),v\cdot w)
\]
has $g(c) = r$ and $g(d) = s$ and Case $J'$ applies. Similarly if we have
$g(x) = T(u,J'(p,q,h(x)),v,w)$, then 
\[
  r = \left[ r\wedge (u\cdot J'(p,q,h(x))) \right] \vee \left[ r\wedge
  \partial ( u\cdot J'(p,q,h(x)) ) \right].
\]
Therefore
\[
  g'(x) = J'(r,J'(u\cdot p,u\cdot q,u\cdot h(x)),v\cdot w)
\]
has $g(c) = r$ and $g(d) = s$ and Case $J'$ applies again.

\Case{$F\in \{U_M^0, U_M^1 \mid M\in \mathcal{L}\cup \mathcal{R} \}$}
If $g(x) = U_F^i(u,v,w,J'(p,q,h(x)))$, then let
\[
  g'(x) = J'(r,U_F^i(u,v,w,q),U_F^i(u,v,w,h(x)));
\]
if $g(x) = U_F^i(u,v,J'(p,q,h(x)),w)$, then let
\[
  g'(x) = J'(r,U_F^i(u,v,p,w), U_F^i(u,v,h(x),w));
\]
if $g(x) = U_F^i(u,J'(p,q,h(x)),v,w)$, then let
\[
  g'(x) = J'(r,U_F^i(u,p,v,w),U_F^i(u,h(x),v,w));
\]
if $g(x) = U_F^i(J'(p,q,h(x)),u,v,w)$, then let
\[
  g'(x) = J'(r,U_F^i(p,u,v,w),U_F^i(h(x),u,v,w)).
\]
\smallskip

This completes the proof of the restricted setting of $M=1$. By repeatedly
applying this argument and Lemma \ref{lem:J commutes}, the lemma is proved
for general $M$.
\end{proof}
%===========================================================================}}}

In the next lemma, we see that the $K$ operation behaves essentially the
same as the $J'$ operation.

%==========================================================================={{{
\begin{lem} \label{lem:K commutes}
Let $F_1, \ldots, F_M$ be fundamental translations, $h(x)$ a happy primitive
polynomial, and $p,q,c,d\in B$ with $d\leq c$ such that the set
\[
  \left\{ F_k\circ\cdots\circ F_1(K(p,q,h(c))) \mid 1\leq k\leq M, \right\}
  \cup \{ K(p,q,h(c)) \}
\]
contains only unhappy elements. If $g(x) = F_M\circ\cdots\circ
F_1\circ K(p,q,h(x))$, then there is a decreasing
Maltsev chain $g(c) = r_1, r_2,\ldots, r_n = g(d)$ connecting $g(c)$ to
$g(d)$ with associated polynomials $g_1(x), \ldots, g_{n-1}(x)$ of the form
\[
  g_k(x) = G_k(p_k,q_k,h_k(x)),
  \quad \text{ where } \quad
  G_k\in \{J,J'\},\ p_k,q_k\in B,\ h_k(x)\in\mathcal{P} \text{ happy}.
\]
\begin{figure}[H] \centering \begin{tikzpicture}[font=\scriptsize,smooth]
  \node (r1) {$g(c) = r_1$};
  \node (r2) [below=of r1,xshift=4em] {$r_2$};
  \node (r3) [below=of r2] {$r_3$};
  \node (r4) [below=of r3, xshift=-4em] {$g(d) = r_4$};

  \coordinate (t1) at ($(r1) !0.5! (r2)$);
  \coordinate (t1) at (t1|-r2);
  \node (c1) at (t1) [xshift=-10em] {$c$};
  \node (d1) [below=of c1] {$d$};
  \node (c2) at (t1) [xshift=10em] {$c$};
  \node (d2) [below=of c2] {$d$};

  \draw (c1) -- (d1)
    (c2) -- (d2)
    (r1) -- (r2) -- (r3) -- (r4) -- (r1);
  \draw[->] ($(c1) !0.5! (d1) + (0.2,0)$) 
    -- node[above]{$F\circ K(\cdots,h(x))$} ($(r1) !0.5! (r4) - (0.2,0)$);
  \draw[->,dashed] ($(c2) !1/4! (d2) - (0.2,0)$) to[out=140,in=45]
    node[above right]{$J(\cdots,h_1(x))$} ($(r1) !0.5! (r2) + (0.2,0)$);
  \draw[->,dashed] ($(c2) !0.5! (d2) - (0.2,0)$) --
    node[above]{$J(\cdots,h_2(x))$} ($(r2) !0.5! (r3) + (0.2,0)$);
  \draw[->,dashed] ($(c2) !3/4! (d2) - (0.2,0)$) to[out=220,in=-45]
    node[below right]{$J'(\cdots,h_1(x))$} ($(r3) !0.5! (r4) + (0.2,0)$);
\end{tikzpicture}
\caption{Lemma \ref{lem:K commutes} illustration.}
\end{figure}
\end{lem}
\begin{proof}
Let $g'(x) = K(p,q,h(x))$, where $p,q$ and $h(x)$ are as in the hypotheses
of the lemma, and let $r = g'(c)$ and $s = g'(d)$. Define
\begin{align*}
  f_1(x) & = J(r,K(r,p,q),S_2(r,K(r,p,q),r,s,h(x))) & & \text{and} \\
  f_2(x) & = J'(t_1,K(t_1,p,q),h(x)), & & \text{where } t_1 = f_1(d).
\end{align*}
We will show that $r = f_1(c)$, $t_1 = f_2(c)$, and $s = f_2(d)$. Since $s$
is unhappy and $s\leq t_1\leq r$, the elements $s$, $t_1$, and $r$ are all
unhappy. Using this fact and Lemmas \ref{lem:J commutes} and \ref{lem:J'
commutes}, the conclusion will follow.

Let $l\in L$. The proof breaks into cases depending on whether $r(l) = 0$.
Overall, it is useful to note that $J$ and $J'$ are $0$-absorbing in the
first and second variables, that $s\leq t_1\leq r$, and if $r(l)\neq 0$,
then $r(l) = h(c)(l)$ or $r(l) = \partial h(c)(l)$.

If $r(l) = 0$ then $s(l) = 0$ and $t_1(l) = 0$. Thus $r(l) = 0 = f_1(x)(l) =
f_1(c)(l)$ and $t_1(l) = s(l) = 0 = f_2(x)(l) = f_2(c)(l) = f_2(d)(l)$. If
$r(l)\neq 0$, then by the definition of $K$, either $p(l) = \partial q(l)$,
$p(l) = q(l) = \partial r(l)$, or $p(l) = q(l) = r(l)$. In each of these
cases, the equations $f_1(c)(l) = r(l)$, $t_1(l) = f_2(c)(l)$, and $s(l) =
f_2(d)(l)$ are easily verified from the definitions.
\end{proof}
%===========================================================================}}}

%==========================================================================={{{
\begin{lem} \label{lem:e_i Maltsev sequence}
Let $c,d\in B$ be such that $d\leq c$ and $\{c,d\}$ is happy. Suppose that
$(r,s)\in \Cg^{\B}(c,d)$ with $s \leq r$ is witnessed by the decreasing
Maltsev sequence $r = u_1, \ldots, u_n = s$ with associated primitive
polynomials $\lambda_1(x), \ldots, \lambda_{n-1}(x)$. Then there is another
decreasing Maltsev sequence, $r = t_1, \ldots, t_m = s$, with associated
primitive polynomials $g_1(x), \ldots, g_{m-1}(x)$ such that for each $k\in
\{1,\ldots,m-1\}$, one of
\begin{enumerate}
  \item $g_k(x)$ is happy,
  \item $g_k(x) = J(t_k,q_k,h_k(x))$ and $h_k(x)\in\mathcal{P}$ is happy, or
  \item $g_k(x) = J'(t_k,q_k,h_k(x))$ and $h_k(x)\in\mathcal{P}$ is happy
\end{enumerate}
holds for some constants $q_k\in B$.
\begin{figure}[H] \centering \begin{tikzpicture}[font=\scriptsize,smooth]
  \node (r) {$r = u_1 = t_2$};
  \node (t2) [below=of r,xshift=4em] {$t_2$};
  \node (t3) [below=of t2] {$t_3$};
  \node (t4) [below=of t3] {$t_3$};
  \node (s) [below=of t4,xshift=-4em] {$s = u_3 = t_5$};
  \node (u2) at (t3) [xshift=-8em] {$u_2$};
  
  \coordinate (t1) at ($0.5*(t2)+0.5*(t3)$);
  \coordinate (t1) at (r|-t1);
  \node (c1) at (t1) [xshift=-14em] {$c$};
  \node (d1) [below=of c1] {$d$};
  \node (c2) at (t1) [xshift=14em] {$c$};
  \node (d2) [below=of c2] {$d$};

  \draw (r) -- (t2) -- (t3) -- (t4) -- (s) -- (u2) -- (r)
    (c1) -- (d1)
    (c2) -- (d2);
  \draw[->] ($(c1) !1/3! (d1) + (0.2,0)$) to[out=45,in=155] 
    node[above left]{$\lambda_1(x)$} ($(r) !0.5! (u2) - (0.2,0)$);
  \draw[->] ($(c1) !2/3! (d1) + (0.2,0)$) to[out=-45,in=205] 
    node[below left]{$\lambda_2(x)$} ($(s) !0.5! (u2) - (0.2,0)$);
  
  \draw[->,dashed] ($(c2) !1/5! (d2) - (0.2,0)$) to[out=135,in=45]
    node[above right]{$J'(\cdots,h_1(x))$} ($(r) !0.5! (t2) + (0.2,0)$);
  \draw[->,dashed] ($(c2) !2/5! (d2) - (0.2,0)$) to[out=158,in=0]
    node[above, xshift=-0.5em]{$J'(\cdots,h_2(x))$} 
    ($(t2) !0.5! (t3) + (0.2,0)$);
  \draw[->,dashed] ($(c2) !3/5! (d2) - (0.2,0)$) to[out=202,in=0]
    node[below, xshift=-0.5em]{$J(\cdots,h_3(x))$} 
    ($(t3) !0.5! (t4) + (0.2,0)$);
  \draw[->,dashed] ($(c2) !4/5! (d2) - (0.2,0)$) to[out=225,in=-45]
    node[below right]{$S_j(\cdots,h_4(x))$} ($(t4) !0.5! (s) + (0.2,0)$);

  \draw[dotted] ($(c1.north west)-(0.2,0)$) node[above]{$e_i(\overline{m},\B)$}
    rectangle ($(d1.south east)+(0.2,0)$);
  \draw[dotted] ($(c2.north west)-(0.2,0)$) rectangle 
    ($(d2.south east)+(0.2,0)$) node[below]{$e_i(\overline{m},\B)$};
  \draw (current bounding box.north east) node[right]{$\B$}
    rectangle (current bounding box.south west);
\end{tikzpicture}
\caption{Lemma \ref{lem:e_i Maltsev sequence} illustration.}
\end{figure}
\end{lem}
\begin{proof}
Select a consecutive pair, $u_k$ and $u_{k+1}$ from the Maltsev sequence. We
will show that the claim holds for the pair, and by applying the argument to
each consecutive pair, it therefore must hold for the entire sequence. By
Lemma \ref{lem:S_i J or K poly}, we can assume that one of the following is
true:
\begin{enumerate}
  \item $\{u_k, u_{k+1}\}$ is happy, so there is a happy primitive $g_k(x)$
    with $(u_k,u_{k+1}) = (g_k(c), g_k(d))$, or
  \item $\{u_k, u_{k+1}\}$ is unhappy, so there are fundamental translations
    $F_1, \ldots, F_M$ and a happy polynomial $h(x)\in \mathcal{P}$ such
    that for some $G\in \{J,J',K\}$ and some $p,q\in B$ the polynomial
    \[
      g'_k(x) = F_m\circ\cdots\circ F_1\circ G(p,q,h(x))
    \]
    has $(u_k,u_{k+1}) = (g'_k(c),g'_k(d))$ and the set
    \[
      \{ F_k\circ\cdots\circ F_1\circ G(p,q,h(c)) \mid 1\leq k\leq M \}
      \cup \{ G(p,1,h(c)) \}
    \]
    contains only unhappy elements.
\end{enumerate}
In the first possibility, we are done. In the second possibility, apply
Lemma \ref{lem:J commutes} (if $G=J$), \ref{lem:J' commutes} (if $G=J'$), or
\ref{lem:K commutes} (if $G=K$) to get a decreasing Maltsev sequence $u_k =
t_k, t_{k+1}, \ldots, t_{k+m'} = u_{k+1}$ with associated primitive
polynomials $g_k(x), \ldots, g_{k+m'-1}(x)$ such that for all $l\in
\{k,\ldots,k+m'-1\}$, 
\[
  g_l(x) = G_l(p_l,q_l,h_l(x)) 
  \qquad \text{where} \qquad
  G_l\in \{J, J'\},\ p_l,q_l\in B,\ h_l(x)\in \mathcal{P} \text{ happy}.
\]
This is almost the conclusion of the lemma. To finish, we observe that if
$f(x) = G(p,q,h(x))$ is a polynomial with $G\in \{J,J'\}$ and $f(d) \leq
f(c)$, then
\[
  f(c) = G(f(c), q, h(c)) 
  \qquad \text{and} \qquad 
  f(d) = G(f(c), q, h(d)).
\]
Applying this observation to the $g_l(x)$ and using the fact that $t_k,
\ldots, t_{k+m'}$ is a decreasing sequence completes the proof.
\end{proof}
%===========================================================================}}}

At this point, we have established the tools necessary to transform general
decreasing Maltsev chains into longer chains whose associated polynomials
are of a very specific form.  Now, we move to on to show that these longer
chains can be shortened and come in just $7$ types, and that these $7$
different types of chains are definable. The following definition simplifies
the discussion.

%==========================================================================={{{
\begin{defn} \label{defn:Chain}
Let $r_1, \ldots, r_n\in B$ be a sequence of elements. We
write 
\[
  r_1 \Chain{F_1} r_2 \Chain{F_2} r_3 \cdots r_{n-1} \Chain{F_{n-1}} r_n
\]
for $F_i\in \{J,J',S_0,S_1,S_2\}$ if both of the following hold
\begin{enumerate}
  \item if $F_i \in \{J,J'\}$, then there exist constants $p_i,q_i\in B$ and
    $\overline{n}_i\in B^2\cup B$ such that
    \[
      r_i = F_i(p_i,q_i,e_{j_i}(\overline{n}_i,r_i)) 
      \qquad \text{and} \qquad
      r_{i+1} = F_i(p_i,q_i,e_{j_i}(\overline{n}_i,r_{i+1}))
    \]
    for some $j_i\in \{0,1,2\}$, and
  \item if $F_i\in \{S_0,S_1,S_2\}$, then there exists $\overline{n}_i\in
    B^2\cup B$ such that
    \[
      r_i = F_i(\overline{n}_i,r_i,r_i,r_i)
      \qquad \text{and} \qquad
      r_{i+1} = F_i(\overline{n}_i,r_{i+1},r_{i+1},r_{i+1}).
    \]
\end{enumerate}
Such a sequence will be referred to as an
\emph{$F_1$-$F_2$-$\cdots$-$F_{n-1}$ chain}. If it is the case that for all
$i$, $(r_i,r_{i+1})\in \Cg^\B(c,d)$ for some $c,d\in B$, then we will say
that $(r_1,r_n)\in \Cg^\B(c,d)$ is \emph{witnessed} by an
$F_1$-$\ldots$-$F_{n-1}$ chain.
\end{defn}
%===========================================================================}}}

%==========================================================================={{{
\begin{lem} \label{lem:S_i defable}
Let $c,d\in e_i(\overline{m},B)$ for some $i\in \{0,1,2\}$ and
$\overline{m}\in B^2\cup B$ and assume that the congruence formula
$\psi(-,-,c,d)$ defines $\Cg^{e_i(\overline{m},\B)}(c,d)$ in
$e_i(\overline{m},\B)$. Suppose that $r, s\in e_j(\overline{n},B)$ for some
$j\in \{0,1,2\}$ and $\overline{n}\in B^2\cup B$ with $s\leq r$. Then
$(r,s)\in \Cg^{\B}(c,d)$ if and only if
\[
  \B \models \psi(e_i(\overline{m},r), e_i(\overline{m},s), c, d)
\]
and $r = S_j(\overline{n},r,s,e_i(\overline{m},r))$ and $s =
S_j(\overline{n},r,s,e_i(\overline{m},s))$.
\end{lem}
\begin{proof}
Suppose first that $(r,s)\in \Cg^{\B}(c,d)$ and let $\B\leq \prod_{l\in L}
\C_l$ be a subdirect representation of $\B$ by subdirectly irreducible
algebras in $\V(\A'(\T))$. Define
\[
  I = \{ l\in L \mid e_i(\overline{m},B)(l)\neq \{0\} \} 
  \qquad \text{and} \qquad 
  J = L\setminus I,
\]
and write a typical element $x\in B$ as $x = (x_I, x_J)$, where $x_I\in
\pi_I(B)$ and $x_J\in \pi_J(B)$. Since $c,d\in e_i(\overline{m},B)$, we have
$c = (c_I,0_J)$ and $d = (d_I,0_J)$. Hence, if $(r,s)\in \Cg^{\B}(c,d)$, it
must be that $r = (r_I,z_J)$ and $s = (s_I,z_J)$ (i.e. $\pi_J(r) =
\pi_J(s)$).

From the definition of $S_i$, we have that $e_i(\overline{m},-)$ is a
homomorphism from $\B$ to $e_i(\overline{m},\B)$. Therefore
$(e_i(\overline{m},r), e_i(\overline{m},s))\in
\Cg^{e_i(\overline{m},\B)}(c,d)$, and
\[
  \B \models \psi(e_i(\overline{m},r), e_i(\overline{m},s), c, d),
\]
since $\psi$ is existentially quantified (it is a congruence formula) and
$e_i(\overline{m},\B)\leq \B$. Since $r\in e_j(\overline{n},B)$, if $t\leq
r$ then $t\in e_j(\overline{n},B)$. Therefore 
\[
  \{ e_i(\overline{m},r), e_i(\overline{m},s) \} 
  \subseteq e_i(\overline{m},e_j(\overline{n},B)) 
  \subseteq e_i(\overline{n},B).
\]
It follows that
\begin{align*}
  S_j(\overline{n}, r, s, e_i(\overline{m},r))
    & = S_j\left( \overline{n}, \mat{r_I \\ z_J}, \mat{s_I \\ z_J}, 
      \mat{r_I \\ 0} \right) \\
    & = \left( \mat{r_I \\ z_J} \wedge \mat{s_I \\ z_J} \right) 
      \vee \left( \mat{r_I \\ z_J} \wedge \mat{r_I \\ 0} \right)
    = \mat{s_I \\ z_J} \vee \mat{r_I \\ 0} \\
    & = \mat{r_I \\ z_J}
    = r, \text{ and likewise} \\
  S_j(\overline{n}, r, s, e_i(\overline{m},s))
    & = S_j \left(\overline{n}, \mat{r_I \\ z_J}, \mat{s_I \\ z_J},
      \mat{s_I \\ 0} \right) \\
    & = \left( \mat{r_I \\ z_J} \wedge \mat{s_I \\ z_J} \right) 
      \vee \left( \mat{r_I \\ z_J}\wedge \mat{s_I \\ 0} \right)
    = \mat{s_I \\ z_J} \vee \mat{s_I \\ 0} \\
    & = \mat{s_I \\ z_J}
    = s,
\end{align*}
completing the forward direction.

Suppose now that $\B \models \psi(e_i(\overline{m},r), e_i(\overline{m},s),
c, d)$, and $r = S_j(\overline{n},r,s,e_i(\overline{m},r))$ and $s =
S_j(\overline{n},r,s,e_i(\overline{m},s))$. Since $\psi$ is a congruence
formula and $e_i(\overline{m},-)$ is a homomorphism from $\B$ to
$e_i(\overline{m},\B)$ and $c,d\in e_i(\overline{m},B)$, we have
\[
  e_i(\overline{m},\B)\models \psi(e_i(\overline{m},r), e_i(\overline{m},s),
  c, d).
\]
Thus, $(e_i(\overline{m},r), e_i(\overline{m},s))\in
\Cg^{e_i(\overline{m},\B)}(c,d)\subseteq \Cg^{\B}(c,d)$. By hypothesis, we
also have that $r = S_j(\overline{n},r,s,e_i(\overline{m},r))$ and $s =
S_j(\overline{n},r,s,e_i(\overline{m},s))$, so it follows that $(r,s)\in
\Cg^{\B}(c,d)$.
\end{proof}
%===========================================================================}}}

In light of the above lemma, define
\begin{equation} \label{defn:psi S} \begin{aligned}
  \psi_S(w,x,y,z) 
  = \bigvee_{i=0}^2 \bigvee_{j=0}^2 \exists \overline{m}, \overline{n} 
    & \left[ y = e_i(\overline{m},y) \wedge z = e_i(\overline{m},z) \right.  \\
  & \quad \left. \wedge \psi_0(e_i(\overline{m},w),e_i(\overline{m},x),y,z) \right. \\
  & \quad \left. \wedge\, w = S_j(\overline{n},w,x,e_i(\overline{m},w)) \right. \\
  & \quad \left. \wedge x = S_j(\overline{n},w,x,e_i(\overline{m},x)) \right]
\end{aligned} \end{equation}
(recall that $\psi_0$ was defined in \eqref{defn:psi 0}). If $c, d, r, s$
satisfy the hypotheses of Lemma \ref{lem:S_i defable}, then $(r,s)\in
\Cg^{\B}(c,d)$ if and only if $\B\models \psi_S(r,s,c,d)$. That is, if
$\{c,d\}$ and $\{r,s\}$ are happy with $d\leq c$ and $s\leq r$, then
$(r,s)\in \Cg^{\B}(c,d)$ if and only if $\B\models \psi_S(r,s,c,d)$.

%==========================================================================={{{
\begin{lem} \label{lem:J defable}
Suppose that $(r,s)\in \Cg^{\B}(c,d)$ for some $c,d\in  B$ and that there is
a decreasing sequence $r = r_1 \geq r_2\geq \ldots \geq r_n = s$ and
some constants $p_1, q_1, \ldots, p_{n-1}, q_{n-1}\in B$ such that
\[
  r_i = J(p_i, q_i, r_i)
  \qquad \text{and} \qquad
  r_{i+1} = J(p_i,q_i,r_{i+1})
\]
for $1\leq i\leq n-1$. Then there exists a constant $\rho\in B$ such that $r
= J(r,\rho,r')$ and $s = J(r,\rho,s')$, where $r' = e_2(r,\rho,r)$ and
$s' = e_2(r,\rho,s)$.

Since $J(x,y,z) = J(x,y,e_2(x,y,z))$ (from the definition of $J$), this is
equivalent to the assertion that for each decreasing $J$-$J$-$\ldots$-$J$
chain (of any length), there is a (length 1) $J$ chain with the same
endpoints.
\begin{figure}[H] \centering \begin{tikzpicture}[font=\scriptsize,smooth]
  \node (r)  {$r=r_1$};
  \node (r2) [below=of r,xshift=-4em] {$r_2$};
  \node (s)  [below=of r2,xshift=4em] {$s = r_3$};
  \node (er) [right=12em of r2]       {$e_2(r,\rho,r)$};
  \node (es) [below=of er]            {$e_2(r,\rho,s)$};

  \draw (r) -- node[above left]{$J$} (r2) -- node[below left]{$J$} (s) 
    -- (r)
    (er) -- (es);
  \draw[->,dashed] ($(er)!0.5!(es)+(-0.2,0)$) to[out=180,in=0]
    node[above,yshift=0.5em]{$J(r,\rho,x)$} ($(r)!0.5!(s)+(0.2,0)$);
  \draw[dotted] (es.south east) node[below]{$e_2(r,\rho,\B)$} rectangle
    (er.north west);
  \draw (current bounding box.north east) node[right]{$\B$} rectangle
    (current bounding box.south west);
\end{tikzpicture}
\caption{Lemma \ref{lem:J defable} illustration.}
\end{figure}
\end{lem}
\begin{proof}
Note that $(r,s)\in \Cg^{\B}(c,d)$ and the presence of a semilattice
operation implies $(r_i,r_{i+1})\in \Cg^{\B}(c,d)$. Next, observe that since
the chain is decreasing and $s\leq r$, if we replace $q_i$ with
$J(q_i,p_i,q_i)$, then we can replace each $p_i$ with $r$. Thus, we may
assume that
\[
  r_i = J(r, q_i, r_i)
  \qquad \text{and} \qquad
  r_{i+1} = J(r,q_i,r_{i+1}).
\]

The proof shall be by induction on $n$ (the length of the chain). If $n=1$,
then 
\[
  r = J(r,q_1,r) 
  \qquad \text{and} \qquad 
  s = J(r,q_1,s).
\]
Therefore
\[
  r = (r\wedge \partial q_1\wedge r) \vee (r\wedge q_1) 
  \qquad \text{and} \qquad
  s = (r\wedge \partial q_1\wedge s) \vee (r\wedge q_1).
\]
Hence without loss of generality, we can replace the last occurrence of $r$
in $r = J(r,q_1,r)$ with $r' = e_2(r,q_1,r)$, and the last occurrence of $s$
in $s = J(r,q_1,s)$ with $s' = e_2(r,q_1,s)$. After making these
replacements, the conclusion of the lemma follows with $\rho = q_1$.

Assume now that the lemma holds for all chains of length less than $N$, and
consider a chain of length $N$: $r = r_1, \ldots, r_N = s$. Applying the
inductive hypothesis to the subchain $r = r_1, \ldots, r_{N-1}$, there
exists $\rho_1\in B$ with $r = J(r,\rho_1,r'')$ and $r_{N-1} =
J(r,\rho_1,r_{N-1}'')$, where $r'' = e_2(r,\rho_1,r)$ and
$r_{N-1}'' = e_2(r,\rho_1,r_{N-1})$. We therefore have 
\begin{equation} \label{eqn:J defable maltsev chain} \begin{aligned}
  r & = J(r,\rho_1,r''), 
    & r_{N-1} = J(r,\rho_1,r_{N-1}'') = J(r,q_{N-1},r_{N-1}), \\
  s & = J(r,q_{N-1},s).
\end{aligned} \end{equation}
Let $\rho = K(r,\rho_1,q_{N-1})$, $r' = e_2(r,\rho,r)$, and $s' =
e_2(r,\rho,s)$. We will now show that $r = J(r,\rho,r')$ and $s =
J(r,\rho,s')$, proving the lemma.

Let $\B = \prod_{l\in L} \C_l$ be a subdirect representation of $\B$ by
subdirectly irreducible algebras. We will analyze the polynomial
$J(r,\rho,x)$ coordinatewise, and as usual it will be easiest to use a
table. Before the table is constructed, however, we will determine which
coordinates permit $r(l)\neq s(l)$. Since $s \leq r_{N-1} \leq r$, either
$r(l)\neq r_{N-1}(l) = s(l) = 0$, or $r(l) = r_{N-1}(l)\neq s(l) = 0$. The
equalities \eqref{eqn:J defable maltsev chain} give us
\begin{align*}
  r & = (r\wedge \rho_1) \vee (r\wedge \partial \rho_1 \wedge r''),
    & r_{N-1} & = (r\wedge \rho_1) \vee (r\wedge \partial \rho_1 \wedge
      r_{N-1}''), \\
  r_{N-1} & = (r \wedge q_{N-1}) \vee (r \wedge \partial q_{N-1}\wedge
      r_{N-1}), 
    & s & = (r\wedge q_{N-1}) \vee (r\wedge \partial q_{N-1} \wedge s).
\end{align*}
Observe that $r(l) = \partial\rho_1(l)$ implies $r(l) =
e_2(r,\partial\rho_1,r)(l) = r''(l)$. Assume first that $r(l) \neq
r_{N-1}(l) = s(l) = 0$. Under this assumption, it must be that $r(l) =
\partial\rho_1(l)$ and $r(l) = \partial q_{N-1}(l)$. Assume now that $r(l) =
r_{N-1}(l)\neq s(l) = 0$. Under this assumption, it must be that $r(l) =
r_{N-1}(l)\in \{\rho_1, \partial\rho_1\}$ and $r = \partial q_{N-1}$.
We now assemble all of this in the table below. As usual, since $r(l) = 0$
implies $r_{N-1}(l) = s(l) = 0$, we assume that $r(l)\neq 0$. In particular,
this means that $r(l)\in \{\rho_1(l), \partial\rho_1(l)\}$.
\[ \begin{array}{l|l|l|c}
  r                         & \rho = K(r,\rho_1,q_{N-1}) & J(r,\rho,e_2(r,\rho,x)) & r\neq s  \\ \hline
  \rho_1 = q_{N-1}          & r                          & r                       & \text{N} \\
  \rho_1 = \partial q_{N-1} & q_{N-1} = \partial r       & e_2(r,\partial r, x)    & \text{Y} \\
  \partial\rho_1            & \rho_1 = \partial r        & e_2(r,\partial r,x)     & \text{Y}
\end{array} \]
If $r(l) = \partial \rho(l)$, then $r'(l) = e_2(r,\rho,r)(l) = r(l)$ and
$s'(l) = e_2(e,\rho,s)(l) = s(l)$. Therefore the table above show that
$J(r,\rho,r') = r$ and $J(r,\rho,s') = s$.
\end{proof}
%===========================================================================}}}

In light of the above lemma, define
\begin{multline} \label{defn:psi J}
  \psi_J(w,x,y,z) 
    = \exists b\left[ \psi_S(e_2(w,b,w),e_2(w,b,x),y,z) \right. \\
  \left. \wedge w = J(w,b,e_2(w,b,w)) \wedge x = J(w,b,e_2(w,b,x)) \right]
\end{multline}
($\psi_S$ was defined in \eqref{defn:psi S}). From the above lemma, if
$\B\in \V(\A'(\T))$, $\{c,d\}$ is happy, and $s\leq r$, then $(r,s)\in
\Cg^{\B}(c,d)$ is witnessed by a decreasing $J$-$\cdots$-$J$, chain if and
only if $\B\models \psi_J(r,s,c,d)$.

%==========================================================================={{{
\begin{lem} \label{lem:J' defable}
Suppose that $(r,s)\in \Cg^{\B}(c,d)$ for some $c,d\in  B$ and that there is
a decreasing sequence $r = r_1 \geq r_2\geq \ldots \geq r_n = s$ and
some constants $p_1, q_1, \ldots, p_{n-1}, q_{n-1}\in B$ such that
\[
  r_i = J'(p_i,q_i,e_{j_i}(\overline{n}_i,r_i)) 
  \qquad \text{and} \qquad 
  r_{i+1} = J'(p_i,q_i,e_{j_i}(\overline{n}_i,r_{i+1}))
\]
for some $j_i\in \{0,1,2\}$ and $\overline{n}_i\in B^2\cup B$. Then there
exist constants, $\rho, t\in B$ such that
\begin{align*}
  r & = J'(r,q_1,e_{j_1}(\overline{n}_1,r)), 
    & t & = J'(r,q_1,e_{j_1}(\overline{n}_1,s)) = J(t,\rho,r'), \\
  s & = J(t,\rho,s'),
\end{align*}
where $r' = e_2(r,\rho,r)$, and $s' = e_2(r,\rho,s)$.

That is, for every $J'$-$\ldots$-$J'$ chain (of arbitrary length) there is a
$J'$-$J$ chain with the same endpoints.
\begin{figure}[H] \centering \begin{tikzpicture}[font=\scriptsize,smooth]
  \node (r)                                 {$r=r_1$};
  \node (r2)   [below=of r,xshift=-4em]     {$r_2$};
  \node (r3)   [below=of r2]                {$r_3$};
  \node (s)    [below=of r3,xshift=4em]     {$s=r_4$};
  \node (t)    at ($(r2)!0.5!(r3)+(8em,0)$) {$t$};
  \begin{scope}[node distance=1.75em]
    \node (e1r)  [right=9em of r]              {$e_{j_1}(\overline{n}_1,r)$};
    \node (e1s)  [below=of e1r,xshift=-2.25em] {$e_{j_1}(\overline{n}_1,s)$};
    \node (e1r2) [below=of e1r,xshift=2.25em]  {$e_{j_1}(\overline{n}_1,r_2)$};
    \node (e2r2) [left=6.75em of r]               {$e_{j_2}(\overline{n}_2,r_2)$};
    \node (e2r3) [below=of e2r2]               {$e_{j_2}(\overline{n}_2,r_3)$};
    \node (e3s)  [left=6.75em of s]               {$e_{j_3}(\overline{n_3},s)$};
    \node (e3r3) [above=of e3s]                {$e_{j_3}(\overline{n}_3,r_3)$};
    \node (es)   [right=6.75em of s]              {$e_2(r,\rho,s)$};
    \node (er)   [above=of es]                 {$e_2(r,\rho,r)$};
  \end{scope}

  \draw (r) -- (r2) -- (r3) -- (s) -- (t) -- (r)
    (e1r) -- (e1r2) (e1r) -- (e1s) (er) -- (es)
    (e2r2) -- (e2r3) (e3r3) -- (e3s);
  \draw[->] ($(e1r)!0.5!(e1r2)+(0.2,0)$) to[out=55,in=110] node[above]{$J'$}
    ($(r)!0.5!(r2)-(0.2,0)$);
  \draw[->,dashed] ($(e1r)!0.5!(e1s)-(0.2,0)$) to[out=135,in=45]
    node[above left]{$J'(r,q_1,x)$} ($(r)!0.5!(t)+(0.2,0)$);
  \draw[->,dashed] ($(er)!0.5!(es)-(0.2,0)$) to[out=180,in=-45]
    node[below]{$J(r,\rho,x)$} ($(t)!0.5!(s)+(0.2,0)$);
  \draw[->] ($(e2r2)!0.5!(e2r3)+(0.2,0)$) to[out=0,in=180]
    node[above,xshift=0.5em]{$J'$} ($(r2)!0.5!(r3)-(0.2,0)$);
  \draw[->] ($(e3r3)!0.5!(e3s)+(0.2,0)$) to[out=0,in=225]
    node[below]{$J'$} ($(r3)!0.5!(s)-(0.2,0)$);

  \coordinate (t1) at (e1r.north west-|e1s.south west);
  \draw[dotted] (t1) rectangle (e1r2.south east)
    node[below]{$e_{j_1}(\overline{n}_1,\B)$};
  \draw[dotted] (er.north west) rectangle (es.south east)
    node[below]{$e_2(r,\rho,\B)$};
  \draw[dotted] (e2r2.north east) rectangle (e2r3.south west)
    node[below]{$e_{j_2}(\overline{n}_2,\B)$};
  \draw[dotted] (e3r3.north east) rectangle (e3s.south west)
    node[below]{$e_{j_3}(\overline{n}_3,\B)$};
  \draw (current bounding box.north east) node[right]{$\B$} rectangle
    (current bounding box.south west);
\end{tikzpicture}
\caption{Lemma \ref{lem:J' defable} illustration.}
\end{figure}
\end{lem}
\begin{proof}
Note that $(r,s)\in \Cg^{\B}(c,d)$ and the presence of a semilattice
operation implies $(r_i,r_{i+1})\in \Cg^{\B}(c,d)$. The proof shall be by
induction on $n$ (the length of the sequence). If $n = 1$, the lemma is
trivially true. Assume now that the lemma holds for all sequences of length
less than $N$, and consider a sequence of length $N$: $r = r_1, \ldots, r_N
= s$. Apply the inductive hypothesis to the subsequence $r_2, \ldots, r_N =
s$ to get
\begin{align*}
  r_2 & = J'(r_2,q_2,e_{j_2}(\overline{n}_2,r_2)), 
    & & t_1 = J'(r_2,q_2,e_{j_2}(\overline{n}_2,s)) = J(t_1,\rho_1,r_2') \\
  s & = J(t_1,\rho_1,s'),
    & & \text{where } r_2' = e_2(r_2,\rho,r_2) \text{ and } 
    s' = e_2(r_2,\rho,s)
\end{align*}
for some constants $\rho_1, t_1\in B$. Since the sequence is decreasing, by
replacing $q_2$ with $J'(q_2,r_2,q_2)$ we are free to replace $r_2$ with
$r$. After doing this replacement we have
\begin{align*}
  r & = J'(r,q_1,e_{j_1}(\overline{n}_1,r)), \\
  r_2 & = J'(r,q_1,e_{j_1}(\overline{n}_1,r_2)) =
    J'(r,q_2,e_{j_2}(\overline{n}_2,r_2)), \\
  t_1 & = J'(r,q_2,e_{j_2}(\overline{n}_2,s)) = J(t_1,\rho_1,r_2'),
    \text{ and} \\
  s & = J(t_1,\rho_1,s').
\end{align*}
We will analyze the subsequence $r, r_2, t_1$.

Let $t = J'(r,q_1,e_{j_1}(\overline{n}_1,t_1))$. We will show that
\begin{align*}
  r & = J'(r,q_1,e_{j_1}(\overline{n}_1,r)), 
    & & t = J'(r,q_1,e_{j_1}(\overline{n}_1,t_1)) = J(t,q_1,r'), \\
  t_1 & = J(t,q_1,t_1'),
    & & \text{for } r' = e_2(r,q_1,r) \text{ and } t_1' = e_2(r,q_1,t_1)
\end{align*}
(that is, $r\Chain{J'} r_2\Chain{J'} t\Chain{J} s$ implies $r\Chain{J'}
t\Chain{J} t_1\Chain{J} s$). The only equalities that have not been shown
already are $t = J(t,q_1,r')$ and $t_1 = J(t,q_1,t_1')$. As usual, let $\B
\leq \prod_{l\in L} \C_l$ be a subdirect representation of $\B$ by
subdirectly irreducible algebras. We will proceed componentwise.

We begin by showing that $t = J(t,q_1,r')$. Since $J(t,q_1,r')\leq t$, by
the flatness of $\C_l$, it will be sufficient to show that $t(l)\neq 0$
implies $J(t,q_1,r')(l)\neq 0$. Suppose that $t(l)\neq 0$. Since $t =
J'(r,q_1,e_{j_1}(\overline{n}_1,t_1))$, either $t(l) = q_1(l)$ or $t(l) =
\partial q_1(l)$. If $t(l) = q_1(l)$, then $J(t,q_1,r')(l) = (t\wedge
q_1)(l) = t(l)$. If $t(l) = \partial q_1(l)$, then $r(l) = t(l)$, since
$\C_l$ is flat and $t\leq r$. Therefore $r'(l) = e_2(r,q_1,r)(l) = r(l) =
t(l)$, and so $J(t,q_1,r')(l) = t(l)$. Hence $J(t,q_1,r') = t$.

Next, we show that $t_1 = J(t,q_1,t_1')$. Again, we will assume that
$t(l)\neq 0$, since $t(l) = 0$ implies that $t_1(l) = 0$ and
$J(t,q_1,t_1')(l) = 0$. Since $\C_l$ is flat, if $t(l)\neq 0$, then
$t_1(l) = t(l)$ and $t(l)\in \{q_1(l), \partial q_1(l)\}$. If $t(l) =
q_1(l)$, then $J(t,q_1,t_1')(l) = t(l) = t_1(l)$. If $t(l) = \partial
q_1(l)$, then $t_1'(l) = e_2(r,q_1,t_1) = t_1(l)$, so $J(t,q_1,t_1')(l) =
t_1'(l) = t_1(l)$. Hence $J(t,q_1,t_1') = t_1$.

We now have
\begin{align*}
  r & = J'(r,q_1,e_{j_1}(\overline{n}_1,r)),
    & t & = J'(r,q_1,e_{j_1}(\overline{n}_1,t_1)) = J(t,q_1,r'), \\
  t_1 & = J(t,q_1,t_1') = J(t_1,\rho_1,r_2'),
    & s & = J(t_1,\rho_1,s'),
\end{align*}
where $r' = e_2(r,q_1,r)$, $t_1' = e_2(r,q_1,t_1)$, $r_2' =
e_2(t_1,\rho_1,t_1)$, and $s' = e_2(r_2,\rho_1,s)$.  Apply Lemma
\ref{lem:J defable} to the sequence $t, t_1, s$ (the part of the sequence
in the range of $J$) to get an element $\rho\in B$ such that $t =
J(t,\rho,t'')$ and $s = J(t,\rho,s'')$ for $t'' = e_2(t,\rho,t)$
and $s'' = e_2(t,\rho,s)$. Since $t\leq r$, if $r' = e_2(r,\rho,r)$ and
$s' = e_2(r,\rho,s)$, we have $t = J(t,\rho,r')$ and $s = J(t,\rho,s')$.
Finally, we now have
\begin{align*}
  r & = J'(r,q_1,e_{j_1}(\overline{n}_1,r)),
    & & t = J'(r,q_1,e_{j_1}(\overline{n}_1,s)) = J(t,\rho,r') \\
  s & = J(t,\rho,s'),
    & & \text{for } r' = e_2(r,\rho,r) \text{ and } s' = e_2(r,\rho,s),
\end{align*}
proving the lemma.
\end{proof}
%===========================================================================}}}

In light of the above lemma, define
\begin{equation} \label{defn:psi J'J}
  \psi_{J'J}(w,x,y,z) = \exists t \left[ \alpha(t,w,x,y,z)
  \wedge \beta(t,w,x,y,z) \right],
\end{equation}
where
\begin{multline*}
  \alpha(t,w,x,y,z) =
  \bigvee_{i=0}^2 \exists \overline{n},a \left[
  \psi_S(e_i(\overline{n},w),e_i(\overline{n},x),y,z) \right. \\
  \left. \wedge w = J'(w,a,e_i(\overline{n},w))
  \wedge t = J'(w,a,e_i(\overline{n},x)) \right]
\end{multline*}
and
\begin{multline*}
  \beta(t,w,x,y,z) =
    \exists b \left[ \psi_S(e_2(w,b,w),e_2(w,b,x),y,z) \right. \\
  \left. \wedge t = J(t,b,e_2(w,b,w)) \wedge x = J(t,b,e_2(w,b,x)) \right].
\end{multline*}
Recall that $\psi_S$ was defined in \eqref{defn:psi S}. From the above
lemma, if $\B\in \V(\A'(\T))$ and $c,d,r,s\in B$ with $c < d$ and $s\leq r$,
then $(r,s)\in \Cg^{\B}(c,d)$ witnessed by a decreasing $J'$-$\cdots$-$J'$
chain implies that $\B\models \psi_{J'J}(r,s,c,d)$. Conversely, $\B\models
\psi_{J'J}(r,s,c,d)$ implies that $(r,s)\in\Cg^\B(c,d)$ (although this is
perhaps not witnessed by a $J'$-$\cdots$-$J'$ chain).

At this point, we have the machinery necessary to change a general
decreasing Maltsev chain into a longer chain whose associated polynomials
all have $J$, $J'$, or $S_j$ as the outermost operations, and then to
collapse repeated occurrences of $J$ and $J'$ to either a single occurrence of
$J$ or the chain $J'$-$J$. In order to fully collapse the chain, we still
need to address what happens when the chain has alternating $J$ and $J'$
operations.

%==========================================================================={{{
\begin{lem} \label{lem:J-J' => J'-J}
Let $r, t, s\in B$ be such that $s \leq t\leq r$ and $(r,t)$, $(t,s)\in
\Cg^{\B}(c,d)$ for some $c$, $d\in B$. Suppose that for constants $p_1, p_2,
q_1, q_2\in B$,
\begin{align*}
  r & = J(p_1,q_1,r), 
    & & t = J(p_1,q_1,t) = J'(p_2,q_2,t'), \\
  s & = J'(p_2,q_2,s'),
\end{align*}
for $t' = e_i(\overline{n},t)$ and $s' = e_i(\overline{n},s)$ for some $i\in
\{0,1,2\}$ and $\overline{n}\in B^2\cup B$. Then there exist constants
$\rho$, $u\in B$ such that
\begin{align*}
  r & = J'(r,\rho, r') & u & = J'(r,\rho,s') = J(u,\rho,r'') \\
  s & = J(u,\rho,s''),
\end{align*}
where $r' = e_i(\overline{n},r)$, $r'' = e_2(r,\rho,r)$, and $s'' =
e_2(r,\rho,s)$.

That is, for every $J$-$J'$ chain there is a $J'$-$J$ chain with the same
endpoints.
\begin{figure}[H] \centering \begin{tikzpicture}[font=\scriptsize,smooth]
  \node (r2)   {$r$};
  \node (t2)   [below=of r2]      {$t$};
  \node (s2)   [below=of t2]      {$s$};
  \coordinate (temp1) at ($(r2)!0.5!(t2)$);
  \coordinate (temp2) at ($(t2)!0.5!(s2)$);
  \begin{scope}[node distance=1.25em]
    \node (s1)   [left=6em of temp1] {$s$};
    \node (t1)   [above=of s1]       {$t$};
    \node (r1)   [above=of t1]       {$r$};
    \node (eit)  [left=5em of temp2] {$e_i(\overline{n},t)$};
    \node (eis1) [below=of eit]      {$e_i(\overline{n},s)$};
  \end{scope}

  \node (r3)   [right=8.5em of r2] {$r$};
  \node (u)    [below=of r3]      {$u$};
  \node (s3)   [below=of u]       {$s$};
  \coordinate (temp1) at ($(r3)!0.5!(u)$);
  \coordinate (temp2) at ($(u)!0.5!(s3)$);
  \begin{scope}[node distance=1.25em]
    \node (eis2) [right=5.33em of temp1]          {$e_i(\overline{n},s)$};
    \node (eir)  [above=of eis2]                  {$e_i(\overline{n},r)$};
    \node (e2r)  [right=5em of temp2]             {$e_2(r,\rho,r)$};
    \node (e2s)  [below=of e2r]                   {$e_2(r,\rho,s)$};
    \node (r4)   [right=12em of u,yshift=0.625em] {$r$};
    \node (s4)   [below=of r4]                    {$s$};
  \end{scope}

  \node (temp1) at ($(r2)!0.5!(s3)$) {$\Rightarrow$};

  \draw (r2) -- (t2) -- (s2) (r3) -- (u) -- (s3)
    (r1) -- (t1) -- (s1) (eit) -- (eis1)
    (eir) -- (eis2) (e2r) -- (e2s)
    (r4) -- (s4);

  \draw[->] ($(r1)!0.5!(t1)+(0.2,0)$) to[out=0,in=180]
    node[above,xshift=0.25em]{$J$} ($(r2)!0.5!(t2)-(0.2,0)$);
  \draw[->] ($(t1)!0.5!(s1)-(0.2,0)$) to[out=180,in=180]
    node[left]{$e_i$} ($(eit)!0.5!(eis1)-(0.2,0)$);
  \draw[->] ($(eit)!0.5!(eis1)+(0.2,0)$) to[out=0,in=180]
    node[below]{$J'$} ($(t2)!0.5!(s2)-(0.2,0)$);

  \draw[->] ($(eir)!0.5!(eis2)-(0.2,0)$) to[out=180,in=0]
    node[above]{$J'$} ($(r3)!0.5!(u)+(0.2,0)$);
  \draw[->] ($(e2r)!0.5!(e2s)-(0.2,0)$) to[out=180,in=0]
    node[below]{$J$} ($(u)!0.5!(s3)+(0.2,0)$);
  \draw[->] ($(r4)!1/3!(s4)-(0.2,0)$) to[out=180,in=0]
    node[above,xshift=0.25em]{$e_i$} ($(eir)!0.5!(eis2)+(0.2,0)$);
  \draw[->] ($(r4)!2/3!(s4)-(0.2,0)$) to[out=180,in=0]
    node[below,xshift=0.25em]{$e_2$} ($(e2r)!0.5!(e2s)+(0.2,0)$);

  \draw[dotted] (eit.north east) rectangle 
      (eis1.south west) node[below]{$e_i(\overline{n},\B)$}
    (eir.north east) node[above]{$e_i(\overline{n},\B)$} rectangle
      (eis2.south west)
    (e2r.north west) rectangle 
      (e2s.south east) node[below]{$e_2(r,\rho,\B)$};
  \draw (current bounding box.north east) node[right]{$\B$} rectangle
    (current bounding box.south west);
\end{tikzpicture}
\caption{Lemma \ref{lem:J-J' => J'-J} illustration.}
\end{figure}
\end{lem}
\begin{proof}
Since $s\leq t\leq r$, in the equations in the hypothesis by replacing $q_1$
with $J(p_1,q_1,p_1)$ and $q_2$ with $J'(p_2,q_2,p_2)$, we can replace $p_1$
and $p_2$ with $r$. Thus,
\begin{equation} \label{eqn:J-J' => J'-J lemma chain} \begin{aligned}
  r & = J(r,q_1,r) = (r\wedge \partial q_1\wedge r) \vee (r\wedge q_1), \\
  t & = J(r,q_1,t) = (r\wedge\partial q_1 \wedge t)\vee (r\wedge q_1) \\
    & = J'(r,q_2,t') = (r\wedge q_2\wedge t') \vee (r\wedge\partial q_2),
      \text{ and} \\
  s & = J'(r,q_2,s') = (r\wedge q_2\wedge s') \vee (r\wedge\partial q_2).
\end{aligned} \end{equation}
Let $\rho = K(r,q_1,q_2)$ and $u = J'(r,\rho,s')$. Let $\B \leq \prod_{l\in
L} \C_l$ be a subdirect representation of $\B$ by subdirectly irreducible
algebras. Note that from the definition of $K$ and the equations
\eqref{eqn:J-J' => J'-J lemma chain}, for all $l\in L$, $r(l)\in
\{\rho,\partial \rho, 0\}$.  We will show that the equalities in the
conclusion of the lemma hold componentwise.

We begin by showing that $r = J'(r,\rho,r')$. As usual, a table is the
easiest way to organize the proof. Since $r(l) = 0$ implies
$J'(r,\rho,r')(l) = 0$, assume that $r(l)\neq 0$.
\[ \begin{array}{l|l|l|c}
  r                                        & \rho = K(r,q_1,q_2) & J'(r,\rho,r') & r\neq t  \\ \hline
  q_1 = q_2                                & q_1 = q_2 = r       & r\wedge r'    & \text{N} \\
  q_1 = \partial q_2                       & q_2 = \partial r    & r             & \text{N} \\
  \partial q_1 = q_2                       & q_1 = \partial r    & r             & \text{Y} \\
  \partial q_1 = \partial q_2              & q_1 = \partial r    & r             & \text{N} \\
  q_1 \not\in\{q_2,\partial q_2\}          & 0                   & 0             & \text{N} \\
  \partial q_1 \not\in\{q_2,\partial q_2\} & q_1 = \partial r    & r             & \text{Y}
\end{array} \]
The only possibly problematic cases are when $r(l) = q_1(l) = q_2(l)$ and
when $r(l) = q_1(l)\not\in\{q_2(l) ,\partial q_2(l)\}$. 

\Case{$r(l) = q_1(l) = q_2(l)$}
If $r(l) = q_1(l)$, then $r(l) = t(l)$, by \eqref{eqn:J-J' => J'-J lemma
chain}, so $r(l) = t(l) = t'(l)$.  Since $t'(l)\leq r'(l)$ (because
$e_i(\overline{n},-)$ is monotonic and $t\leq r$), it follows that $r'(l) =
r(l)$. Thus $J(r,\rho,r')(l) = r(l)$ in this case.

\Case{$r(l) = q_1(l) \not\in\{ q_2(l), \partial q_2(l) \}$}
If $r(l) = q_1(l)$, then $r(l) = t(l)$, but if $r(l)\not\in \{q_2(l),
\partial q_2(l) \}$ then $t(l) = 0$ by \eqref{eqn:J-J' => J'-J lemma chain},
contradicting our assumption that $r(l) \neq 0$. Therefore, in this case
$J'(r,\rho,r')(l) = r(l)$ as well.
\smallskip

Next, we show that $u = J(u,\rho,r'')$. Since $J(u,\rho,r'')\leq u$ and each
$\C_l$ is flat, it will be sufficient to show that when $u(l)\neq 0$,
$J(u,\rho,r'')(l)\neq 0$ as well. When $u(l)\neq 0$, since $u\leq r$, it
must be that $u(l) = r(l)$. From the construction of $\rho$, if $r(l)\neq 0$
either $r(l)\in \{\rho(l),\partial \rho(l)\}$. If $\rho(l) = r(l)$, then
$J(u,\rho,r'')(l) = r(l) = u(l)$. Suppose now that $\rho(l) = \partial
r(l)$. Then $r''(l) = e_2(r,\rho,r)(l) = r(l) = u(l)$, so $J(u,\rho,r'')(l)
= r(l)$. Since $\rho(l) = r(l)$ or $\rho(l) = \partial r(l)$ for all $l\in
L$, we have that $u = J(r,\rho,r'')$.

Finally, we show that $s = J(u,\rho,s'')$. There are three possibilities:
$r(l) = u(l) = s(l)$, $r(l) = u(l)$ but $s(l)=0$, or $r(l)\neq 0$ but $u(l)
= s(l) = 0$.

\Case{$r(l) = u(l) = s(l)$}
If $\rho(l) = r(l)$, then $J(u,\rho,s'') = (u\wedge \rho)(l) = r(l) = s(l)$.
If $\rho(l) = \partial r(l)$, then $s''(l) = e_2(r,\rho,s)(l) = s(l)$, so
$J(u,\rho,s'') = (u\wedge \rho\wedge s'')(l) = s(l)$.

\Case{$r(l) = u(l)$ but $s(l) = 0$}
If $\rho(l) = r(l)$, then $r(l) = q_1(l) = q_2(l)$, so $s(l) =
J'(r,q_2,s')(l) = J'(r,\rho,s')(l) = u(l)$, contradicting $u(l)\neq 0$. If
$\rho(l) = \partial r(l)$, then $s''(l) = e_2(r,\rho,s) = s(l)$, so
$J(u,\rho,s'')(l) = s''(l) = s(l)$.

\Case{$r(l)\neq 0$ and $u(l) = s(l) = 0$}
If $\rho(l) = r(l)$, then $J(u,\rho,r'')(l) = u(l) = J(u,\rho,s'')(l)$, so
$s(l) = J(u,\rho,s'')$. If $\rho(l) = \partial r(l)$, then $s''(l) =
e_2(r,\rho,s) = s(l)$, so $J(u,\rho,s'')(l) = s''(l) = s(l)$.
\smallskip

In all cases, we have $J(u,\rho,s'')(l) = s(l)$, so it must be that
$J(u,\rho,s'') = s$, completing the proof.
\end{proof}
%===========================================================================}}}

Lemma \ref{lem:J' defable} allows us to reduce a chain consisting of a
string of $J'$ operations to a $J'$-$J$ chain. A chain of length $1$
consisting of a single $J'$ operation is an example of a $J'$-$J$ chain
since $J(x,x,x)\approx x$ in $\V(\A'(\T))$.

Since $e_i(\overline{n},\B)$ has the property that $a\in
e_i(\overline{n},B)$ and $b\leq a$ implies $b\in e_i(\overline{n},B)$, for
any decreasing Maltsev chain, if any one of the intermediate elements is
happy, then all subsequent ones are happy as well. Thus, every Maltsev chain
must terminate in a (possibly length 0) $S_i$ chain, and $S_i$ chains do not
appear anywhere else in the chain except at the end. Lemma \ref{lem:S_i
defable} allows us to collapse repeated $S_i$ links to a single $S_i$.
Hence, to the already defined $\psi_S$, $\psi_J$, and $\psi_{J'J}$ we add
the following:
\begin{align}
  \label{defn:psi JS}
  \psi_{JS}(w,x,y,z) 
    & = \exists t \left[ \psi_J(w,t,y,z) \wedge \psi_S(t,x,y,z) \right], \\
  \label{defn:psi J'JS}
  \psi_{J'JS}(w,x,y,z)
    & = \exists t \left[ \psi_{J'J}(w,t,y,z) \wedge \psi_S(t,x,y,z) \right]
\end{align}
(see equations \eqref{defn:psi S}, \eqref{defn:psi J}, and \eqref{defn:psi
J'J} for definitions of $\psi_S$, $\psi_J$, and $\psi_{J'J}$, respectively).

%==========================================================================={{{
\begin{lem} \label{lem:reduce chains}
Let $\{c,d\}$ be happy. If $(r,s)\in \Cg^{\B}(c,d)$ is witnessed by a
decreasing Maltsev sequence whose associated polynomials are primitive, then
$(r,s)\in \Cg^{\B}(c,d)$ is witnessed by one of the following chains:
\begin{enumerate}
  \item $S_j$ for some $j\in \{0,1,2\}$ and $\B\models \psi_S(r,s,c,d)$,
  \item $J$ and $\B\models \psi_J(r,s,c,d)$,
  \item $J'$-$J$ and $\B\models \psi_{J'J}(r,s,c,d)$,
  \item $J$-$S_j$ for some $j\in \{0,1,2\}$ and $\B\models
    \psi_{JS}(r,s,c,d)$, or
  \item $J'$-$J$-$S_j$ for some $j\in \{0,1,2\}$ and $\B\models
    \psi_{J'JS}(r,s,c,d)$.
\end{enumerate}
Moreover, if $\B\models \psi_G(r,s,c,d)$ for $G\in \{J,J',J'J,JS,J'S,J'JS\}$
then $(r,s)\in \Cg^{\B}(c,d)$.
\end{lem}
\begin{proof}
In all of the cases, that $\B$ models the claimed first-order formula
follows from the definition of the formula and the conclusion of the
appropriate lemmas: \ref{lem:S_i defable} for formulas whose subscript ends
in $S$, \ref{lem:J defable} for formulas whose subscript begins in $J$, and 
\ref{lem:J' defable} and \ref{lem:e_i Maltsev sequence} for formulas whose
subscript begin with $J'$. The ``moreover'' part of the lemma follows from
the fact that each $\psi_G$ is a congruence formula.

Let $r = r_1, r_2,\ldots, r_n = s$ be the decreasing Maltsev sequence
witnessing $(r,s)\in \Cg^{\B}(c,d)$ and let $\lambda_1(x), \ldots,
\lambda_{n-1}(x)$ be the primitive polynomials associated to it. From Lemma
\ref{lem:e_i Maltsev sequence}, without loss of generality we may assume
that for each $k\in \{1,\ldots,n-1\}$ one of the following holds 
\begin{enumerate}
  \item $\lambda_k(x) = S_{j_k}(\overline{m}_k,r_k,r_{k+1},h_k(x))$ for some
    $j_k\in \{0,1,2\}$ and $\overline{m}_k\in B^2\cup B$ (i.e. $\lambda_k$
    is happy),
  \item $\lambda_k(x) = J(r_k,q_k,h_k(x))$ for some $q_k\in B$, or
  \item $\lambda_k(x) = J'(r_k,q_k,h_k(x))$ for some $q_k\in B$,
\end{enumerate}
where the polynomials $h_k(x)$ are happy and primitive for all $k$. Since
$e_{i}(\overline{n},B)$ has the property that if $q\in
e_{i}(\overline{n},B)$ and $p\leq q$ then $p\in e_{i}(\overline{n},B)$, if
$r_k \in e_i(\overline{n},B)$, then 
\[
  r = r_1 \Chain{F_1} r_2 \Chain{F_2} r_3 \cdots r_{k-1} \Chain{F_{k-1}} r_k
    \Chain{S_{i}} r_{k+1} \Chain{S_{i}} r_{k+2} \cdots r_{n-1} 
    \Chain{S_{i}} r_n = s,
  \quad
  F_i\in \{J,J'\}.
\]
Lemma \ref{lem:S_i defable} can be applied to the pair $(r_k,s)\in
\Cg^{\B}(c,d)$ to collapse the end of the chain, and produce a new shorter
chain of the form $F_1$-$F_2$-$\cdots$-$S_{i}$.

From Lemmas \ref{lem:J defable} and \ref{lem:J' defable}, subchains
consisting of entirely $J$ or $J'$ can be converted to subchains consisting
of a single $J$ or $J'$-$J$, respectively:
\begin{align*}
  J - J - \cdots - J & \Rightarrow J, \\
  J' - J' - \cdots J' & \Rightarrow J' - J
\end{align*}
Thus, we need only consider chains in which the $J$ and $J'$ are mixed. We
will show that all such chains can be reduced to $J'-J$ chains. We have
\begin{align*}
  & J - J' - J' 
    \Rightarrow J - J' - J
    \Rightarrow J' - J - J
    \Rightarrow J' - J, \\
  & J' - J - J'
    \Rightarrow J' - J' - J
    \Rightarrow J' - J - J
    \Rightarrow J' - J, \\
  & J' - J' - J 
    \Rightarrow J' - J - J
    \Rightarrow J' - J, \\
  & J' - J - J
    \Rightarrow J' - J, \\
  & J - J' - J 
    \Rightarrow J' - J - J
    \Rightarrow J' - J, \text{ and} \\
  & J - J - J'
    \Rightarrow J - J'
    \Rightarrow J' - J
\end{align*}
(using Lemmas \ref{lem:J defable}, \ref{lem:J' defable}, and \ref{lem:J-J'
=> J'-J}). It follows that all mixed chains of $J$ and $J'$ can be reduced
to a $J'$-$J$ chain. The conclusion of the lemma follows.
\end{proof}
%===========================================================================}}}

Given the previous lemma, let
\begin{multline*}
  \psi_1(w,x,y,z) = \psi_S(w,x,y,z) 
    \vee \psi_J(w,x,y,z) 
    \vee \psi_{J'J}(w,x,y,z) \\
    \vee \psi_{JS}(w,x,y,z) 
    \vee \psi_{J'JS}(w,x,y,z).
\end{multline*}
From above lemma, if $\{c,d\}$ is happy and $s\leq r$, then $(r,s)\in
\Cg^{\B}(c,d)$ if and only if $\B\models \psi_1(r,s,c,d)$.

All of the lemmas above required that $s\leq r$. Since $\B$ is a
semilattice, if $(r,s)\in \Cg^{\B}(c,d)$, then there is an intermediate
element, $t\leq r\wedge s$, such that $(r,t), (t,s)\in \Cg^{\B}(c,d)$ and
there are decreasing Maltsev chains connecting $r$ to $t$ and $s$ to $t$
(this will be proved in detail in Theorem \ref{thm:dpsc e_i factor}).
Therefore, define
\begin{equation} \label{defn:psi 2}
  \psi_2(w,x,y,z) = \exists t \left[ \psi_1(w,t,y,z) \wedge \psi_1(x,t,y,z)
  \right].
\end{equation}
Finally, we will now use the above lemmas to prove that there is a
congruence formula $\Gamma_1$ (defined in the theorem below) such that if
$a,b\in B$ are distinguished by a polynomial of the form
$e_i(\overline{n},x)$ for some $i\in \{0,1,2\}$ and some $\overline{n}$,
then $\Cg^{\B}(a,b)$ has a subcongruence witnessed by $\Gamma_1(-,-,a,b)$
and that this subcongruence is defined by $\psi_2$.

%==========================================================================={{{
\begin{thm} \label{thm:dpsc e_i factor}
Let $a,b\in B$ and suppose that there is $i\in \{0,1,2\}$ and
$\overline{m}\in B^2\cup B$ such that $e_i(\overline{m},a)\neq
e_i(\overline{m},b)$. Let 
\[
  \Gamma_1(w,x,y,z) = \bigvee_{j=0}^2 \exists \overline{n}\;
    \Gamma_0(w,x,e_j(\overline{n},y),e_j(\overline{n},z))
\]
($\Gamma_0$ was defined in \eqref{defn:psi 0}). The congruence
$\Cg^{\B}(a,b)$ has a principal subcongruence witnessed by
$\Gamma_1(-,-,a,b)$ and defined by $\psi_2$. That is,
\[
  \B \models 
    \exists c,d \left[ \Gamma_1(c,d,a,b) \wedge \Pi_{\psi_2}(c,d) \right].
\]
\end{thm}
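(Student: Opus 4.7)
My plan is to first produce the witness pair $(c,d)$ by applying Lemma \ref{lem:dpsc all factors} inside the image subalgebra $e_i(\overline{m},\B)$, and then to verify the two conjuncts $\Gamma_1(c,d,a,b)$ and $\Pi_{\psi_2}(c,d)$ separately. By hypothesis $e_i(\overline{m},a)\neq e_i(\overline{m},b)$, and both elements lie in the subalgebra $e_i(\overline{m},\B)\leq \B$, which satisfies $\exists \overline{n}\,e_i(\overline{n},x)\approx x$ (with $\overline{n}=\overline{m}$) and therefore belongs to $\M_i$. Lemma \ref{lem:dpsc all factors} then supplies distinct $c,d\in e_i(\overline{m},B)$ such that $\Gamma_0^i(c,d,e_i(\overline{m},a),e_i(\overline{m},b))$ holds and $\psi_0^i(-,-,c,d)$ defines $\Cg^{e_i(\overline{m},\B)}(c,d)$. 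Choosing the $i$th disjunct of $\Gamma_0$ in \eqref{defn:psi 0} and the existential witness $\overline{n}=\overline{m}$ in the definition of $\Gamma_1$ yields $\Gamma_1(c,d,a,b)$ at once.

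It remains to establish $\Pi_{\psi_2}(c,d)$, i.e., that $\psi_2(-,-,c,d)$ defines $\Cg^{\B}(c,d)$. One direction is a formality: every component assembled into $\psi_2$ is built from congruence formulas via existentials and conjunctions, so if $\B\models \psi_2(r,s,c,d)$ then $(r,s)\in \Cg^{\B}(c,d)$ follows immediately (by transitivity through the intermediate element $t$ that $\psi_2$ quantifies over). For the converse, suppose $(r,s)\in \Cg^{\B}(c,d)$ and pick, via Maltsev's lemma, a witnessing chain $r=k_1,k_2,\ldots,k_n=s$ whose associated polynomials are generated by fundamental translations. Set $t = k_1\wedge k_2\wedge\cdots\wedge k_n$. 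Meeting successively with the $k_j$ starting at $r$ produces a decreasing Maltsev chain from $r$ to $t$ whose $j$th associated polynomial is $k_1\wedge\cdots\wedge k_j\wedge\lambda_j(\cdot)$---still generated by fundamental translations---and deleting consecutive repeats leaves a strictly decreasing chain; the symmetric construction works from $s$. Hence $(r,t),(s,t)\in \Cg^{\B}(c,d)$ are each witnessed by a strictly decreasing Maltsev chain whose polynomials are generated by fundamental translations, and $t\leq r\wedge s$.

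Now I apply Lemma \ref{lem:reduce chains} to each of the pairs $(r,t)$ and $(s,t)$, using that $c,d\in e_i(\overline{m},B)$. The lemma asserts that each pair is witnessed by one of the seven canonical chain shapes and that the corresponding formula among $\psi_S,\psi_J,\psi_{J'},\psi_{J'J},\psi_{JS},\psi_{J'S},\psi_{J'JS}$ holds at $(r,t,c,d)$ and at $(s,t,c,d)$. Thus $\B\models \psi_1(r,t,c,d)\wedge \psi_1(s,t,c,d)$, and $\B\models \psi_2(r,s,c,d)$ follows from the definition \eqref{defn:psi 2} by taking the quantified intermediate to be $t$.

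The hard part---turning an arbitrary Maltsev witness for a pair in $\Cg^{\B}(c,d)$ into one whose polynomials have the seven specific outer forms catalogued by $\psi_1$---has already been absorbed into the preceding Lemmas \ref{lem:S_i J or K poly} through \ref{lem:J-J' => J'-J}, and packaged into Lemma \ref{lem:reduce chains}. At the level of this theorem only the assembly of the existential witnesses, the DPSC application inside $\M_i$, and the reduction to the common lower bound $t$ remain, all of which are routine once those lemmas are in hand.
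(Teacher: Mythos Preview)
Your proposal is correct and follows essentially the same approach as the paper: you obtain $(c,d)$ via Lemma~\ref{lem:dpsc all factors} inside $e_i(\overline{m},\B)$, lift $\Gamma_0$ to $\Gamma_1$ by existential persistence, and for $\Pi_{\psi_2}(c,d)$ you split an arbitrary Maltsev chain into two decreasing chains meeting at a common lower bound $t$ and invoke Lemma~\ref{lem:reduce chains}. The only small point worth making explicit (the paper handles it in the outline rather than in the proof proper) is that one should arrange $d\leq c$, e.g.\ by replacing $(c,d)$ with $(c,c\wedge d)$, since the chain-rewriting lemmas feeding into Lemma~\ref{lem:reduce chains} assume this.
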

\begin{proof}
From the definition of $e_i$ \eqref{defn:e_i} and Lemma \ref{lem:dpsc all
factors}, $e_i(\overline{m},\B)$ is congruence distributive and has
definable principal subcongruences witnessed by $\Gamma_0$ and $\psi_0$
(defined in \eqref{defn:psi 0}). Therefore there are $c,d\in
e_i(\overline{m},B)$ such that 
\[
  (c,d)\in
    \Cg^{e_i(\overline{m},\B)}(e_i(\overline{m},a),e_i(\overline{m},b))
\]
is witnessed by $\Gamma_0$ and $\Cg^{e_i(\overline{m},\B)}(c,d)$ is defined
in by $\psi_0$. By Lemma, we have that \ref{lem:S_i defable},
$e_i(\overline{m},\B)\models \Pi_{\psi_2}(c,d)$. Summarizing,
\[
  e_i(\overline{m},\B)\models
    \Gamma_0(c,d,e_i(\overline{m},a),e_i(\overline{m},b))
  \qquad \text{and} \qquad
  e_i(\overline{m},\B)\models \Pi_{\psi_2}(c,d).
\]
Since $\Gamma_0$ is existentially quantified (it is a congruence formula)
and $e_i(\overline{m},\B)\leq \B$, $\B\models \Gamma_1(c,d,a,b)$. It remains
to be shown that $\B\models \Pi_{\psi_2}(c,d)$ (that is, that
$\Cg^{\B}(c,d)$ is defined in $\B$ by $\psi_2$).

Let $r, s\in B$ and $(r,s)\in \Cg^{\B}(c,d)$. To show that $\B\models
\psi_2(r,s,c,d)$, by Lemma \ref{lem:reduce chains} we need only show that
there are decreasing Maltsev sequences connecting $r$ to some $t$ and $s$ to
$t$ and whose associated polynomials are primitive.

Let $r = r_1, \ldots, r_n = s$ be a Maltsev sequence connecting $r$ to $s$
with associated primitive polynomials $\lambda_1(x), \ldots,
\lambda_{n_1}(x)$. Let
\[
  t_i = \begin{cases}
    r_1\wedge r_2 \wedge \cdots \wedge r_i & \text{if } i\leq n, \\
    t_{i-n} \wedge r_{i-n+1} \cdots \wedge r_n & \text{if } n\leq i\leq 2n,
  \end{cases}
\]
and
\[
  \mu_i(x) = \begin{cases}
    \lambda_i(x) \wedge t_i & \text{if } i<n, \\
    \lambda_{i-n} \wedge t_{i+1} & \text{if } n < i \leq 2n.
  \end{cases}
\]
Then the sequences $r = t_1, t_2, \ldots, t_n$ and $s = t_{2n}, \ldots
t_{n+1} = t_n$ are decreasing Maltsev sequences witnessed by the primitive
polynomials $\mu_i(x)$. Thus,
\[
  \B \models \psi_1(r,t_n,c,d) \wedge \psi_1(s,t_n,c,d),
\]
and hence $\B\models \psi_2(r,s,c,d)$. From the definition of $\psi_2$, it
is a congruence formula, so if $\B\models \psi_2(u,v,c,d)$ then $(u,v)\in
\Cg^{\B}(c,d)$. Therefore $\B\models \Pi_{\psi_2}(c,d)$.
\end{proof}
%===========================================================================}}}

Having completed the argument for the case when $a,b\in B$ are distinguished
by a polynomial of the form $e_i(\overline{m},x)$ for some $i\in \{0,1,2\}$
and some $\overline{m}\in B^2\cup B$, we move on to the case where $a,b$ are
distinguished by an operation from a sequential SI. The next lemma is
crucial for this case as well as the case for machine SI's.

%==========================================================================={{{
\begin{lem} \label{lem:e_i=0 no maltsev seq}
Let $c,d\in B$ be such that $d\leq c$ and $e_i(\overline{m},c) =
e_i(\overline{m},d)$ for all $i\in \{0,1,2\}$ and all $\overline{m}\in
B^2\cup B$. Suppose that
\begin{align*}
  & r = f_1(c), && t = f_1(d) = f_2(c), \\
  & s = f_2(d),
\end{align*}
for some polynomials $f_1(x)$ and $f_2(x)$. Then $r = t$ or $t = s$.
\end{lem}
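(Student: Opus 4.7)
The plan is to work coordinatewise in a subdirect representation $\B\leq \prod_{l\in L}\C_l$ by subdirectly irreducible algebras, and to split $L$ into \emph{small} coordinates (those where $\C_l\models \exists\overline{n}\,[e_i(\overline{n},x)\approx x]$ for some $i\in\{0,1,2\}$) and \emph{non-small} coordinates (those where $\C_l\models e_i(\overline{y},x)\approx 0$ for every $i$); this dichotomy is standard from McKenzie's classification. On a small coordinate $l$, fix a witness $\overline{n}\in \C_l$ and lift it via surjectivity of $\pi_l$ to some $\overline{m}\in B^2\cup B$ with $\overline{m}(l) = \overline{n}$; the hypothesis then forces $c(l) = e_i(\overline{m},c)(l) = e_i(\overline{m},d)(l) = d(l)$, so $r(l) = t(l) = s(l)$ automatically. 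On a non-small $l$, flatness of $\C_l$ together with $d\leq c$ force either $c(l) = d(l)$ or $d(l) = 0 \neq c(l)$, and this is where the real work lies.

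The heart of the argument is a syntactic dichotomy for polynomials restricted to non-small coordinates. By Lemma~\ref{lem:K on large SI}, every non-small $\C_l$ satisfies $S_i\approx 0$, $J(x,y,z)\approx x\wedge y$, and $J'(x,y,z)\approx K(x,y,z)\approx x\wedge y\wedge z$; consequently each fundamental operation, restricted to such $\C_l$, is either identically $0$ or $0$-absorbing in every one of its \emph{active} argument positions, the inactive positions being exactly every argument of any $S_i$ together with the third argument of any $J$. Call an occurrence of $x$ in a term $\phi$ \emph{live} if the path from that occurrence up to the root of $\phi$ traverses only active positions. Induction on the complexity of $\phi$ will then prove: \emph{either} $(a)$ $x$ has no live occurrence in $\phi$, so that $\phi(u,\overline{b})(l)$ is independent of $u$ on every non-small $l$, \emph{or} $(b)$ $x$ has at least one live occurrence, so that $\phi(u,\overline{b})(l) = 0$ whenever $u(l) = 0$, at every non-small $l$. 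In the inductive step for case $(b)$, the IH collapses the live subterm to $0$ when $u(l) = 0$, and active positions of the surrounding operation are $0$-absorbing, so the $0$ propagates upward.

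Applying the dichotomy to the term $\phi$ representing $f_1$: in case $(a)$, $f_1(c)(l) = f_1(d)(l)$ on every non-small $l$, and $c(l) = d(l)$ on every small $l$ gives the same equality, so $r = t$ globally. In case $(b)$, analyze each non-small $l$ separately: if $c(l) = d(l)$, then $r(l) = t(l)$ and $s(l) = f_2(d)(l) = f_2(c)(l) = t(l)$; if $c(l)\neq d(l) = 0$, then $t(l) = f_1(d)(l) = 0$, and monotonicity of $f_2$ gives $s(l)\leq t(l) = 0$, hence $s(l) = 0 = t(l)$. Either way $t(l) = s(l)$ on every non-small coordinate, and the small coordinates trivially give $t(l) = s(l)$, so $t = s$ globally. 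Thus $r = t$ or $t = s$, as desired.

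The main obstacle is confirming that ``liveness'' is a property of the term $\phi$ alone rather than of the projected constants $\overline{b}(l)$; one might worry that the same polynomial could appear constantly nonzero on one non-small coordinate yet genuinely $0$-absorbing on another, since the constants $\overline{b}(l)$ may vary wildly with $l$. The resolution lies in Lemma~\ref{lem:K on large SI}: on every non-small coordinate the structural roles of $S_i$ (vanishing) and of the third argument of $J$ (ignored) coincide, so the classification of each $x$-occurrence as live or dead is purely syntactic, and conclusion $(b)$ therefore propagates uniformly across every non-small $l$ simultaneously, precluding the mixed behavior and giving the desired global dichotomy.
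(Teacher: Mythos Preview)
Your proof is correct and follows essentially the same route as the paper's: both hinge on the syntactic dichotomy that, on coordinates where $e_i\approx 0$ (hence by Lemma~\ref{lem:K on large SI} every operation collapses to something $0$-absorbing in all arguments except those of $S_i$ and the third argument of $J$), a polynomial $f_1$ is either coordinatewise constant or coordinatewise $0$-absorbing. The paper invokes this dichotomy tersely inside a contradiction argument (assume $t\neq s$, locate a coordinate $k$ with $t(k)\neq 0=s(k)$, force $d(k)=0$, and conclude $f_1$ falls in the constant branch, whence $r=t$), whereas you establish it explicitly via the ``live occurrence'' induction and then run a direct case split on $f_1$; your version has the virtue of making transparent why the dichotomy is uniform across all non-small coordinates, a point the paper leaves implicit.
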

\begin{proof}
Suppose that $t\neq s$. We will show that $r = t$. Let $\B\leq \prod_{l\in
L} \C_l$ be a subdirect representation of $\B$ by subdirectly irreducible
algebras. Since each $\C_l$ is flat and $s<t\leq r$, there is $k\in L$
such that $r(k) = t(k)\neq 0$, and $s(k) = 0$.

\begin{claim}
if $s(k) = 0$ then $d(k) = 0$.
\end{claim}
\begin{claimproof}
Suppose to the contrary that $d(k)\neq 0$ but $0 = s(k) = f_2(d)(k)$. Since
$\C_k$ is flat, $d(k)\neq 0$ implies that $d(k) = c(k)$, so
\[
  t(k)
  = f_2(c)(k)
  = f_2(d)(k) 
  = s(k) 
  = 0.
\]
This contradicts our choosing $k$ such that $t(k) \neq s(k) = 0$, and
proves the claim.
\end{claimproof}

By the above claim, we have that $d(k) = 0$, but since the only $\C_l$ where
$d(l)\neq c(l)$ are $0$-absorbing (see the description of SI's in Section
\ref{sec:SIs}), this implies that either $f_1(d)(k) = 0$, contradicting
$t(k) = f_1(d)(k)\neq s(k) = 0$, or that $f_1(c)(l) = f_1(d)(l)$ for all $l$
such that $\C_l$ is $0$-absorbing (i.e. $f_1(x)$ doesn't depend on $x$ in
the $0$-absorbing $\C_l$). Since $c(l)$ and $d(l)$ can only differ when
$\C_l$ is $0$-absorbing, this means that $f_1(c) = f_1(d)$, implying that $r
= t$.
\end{proof}
%===========================================================================}}}

Our last remaining task is to address the case where $a,b\in B$ differ at
coordinate that is one of the three small SI's that model
$e_i(\overline{y},x)\approx 0$. These algebras are described in Section
\ref{sec:SIs}.

%==========================================================================={{{
\begin{lem} \label{lem:e_i=0 all polys=J'T}
Let $\B\leq \prod_{l\in L} \C_l$ be a subdirect representation of $\B$ by
subdirectly irreducible algebras, and suppose that $c,d\in B$ are such that
\begin{enumerate}
  \item $d\leq c$,
  \item $e_i(\overline{n},c) = e_i(\overline{n},d)$ for all $i\in \{0,1,2\}$
    and all $\overline{n}\in B^2\cup B$, and
  \item for each $l\in L$, $\Cg^{\C_l}(c(l),d(l))$ lies in the monolith of
    $\C_l$.
\end{enumerate}
Let 
\begin{multline*}
  \mathcal{C} = \{\id(x)\} \cup \{ F_1(a_1,b_1,F_2(a_2,b_2,\cdots
    F_n(a_n,b_n,x)\cdots )) \\
  \mid n\in \NN, F_i\in \mathcal{L}\cup \mathcal{R}, \text{and } a_i,b_i\in
  B\}.
\end{multline*}
If $g(x)$ is a primitive polynomial of $\B$ such that $g(c)\neq g(d)$, then
there is some $\rho\in B$, some $F(x)\in \mathcal{C}$, and some polynomial
$g'(x) = J'(g(c),\rho,F(x))$ such that $(g(c),g(d)) = (g'(c),g'(d))$.
\end{lem}
\begin{proof}
Let $\B = \prod_{l\in L} \C_l$ be a subdirect representation of $\B$ by
subdirectly irreducible algebras $\C_l$. We begin by proving a claim.

\begin{claim}
If $h(x) = H(f(x))$ for a fundamental translation $H(x)$ and polynomial
$f(x)$, then $h'(x) = J'(h(c),\rho,F(f(x)))$ satisfies $(h(c),h(d)) =
(h'(c),h'(d))$ for some $\rho\in B$ and some $F(x)\in \mathcal{C}$.
\end{claim}
\begin{claimproof}
For convenience, let $r = h(c)$ and $s = h(d)$. The proof shall be by cases,
depending on which particular fundamental operation $H$ is a translation of.
As usual, we shall proceed componentwise. The only possible $l\in L$ with
$c(l)\neq d(l)$ are such that $\C_l\models e_i(\overline{n},x)\approx 0$, by
the second hypothesis, and the third hypothesis implies that 
\[
  \B\models \big[ x\cdot c \approx x\cdot d \big] 
    \wedge \big[ c\cdot x \approx d\cdot x \big]
\]
Therefore by Lemma \ref{lem:small SI's e_i=0} and from the hypotheses, the
only fundamental translations that possibly do not collapse $(c,d)$ are
translations of the operations $\wedge$, $J$, $J'$, $K$, $E$, $U_E^0$, and
$U_E^1$, where $E\in \mathcal{L}\cup\mathcal{R}$. In all cases except for
operations from $\mathcal{L}\cup\mathcal{R}$ we will take the $F(x)\in
\mathcal{C}$ in the statement of the claim to be $\id(x)$.

Before beginning with the cases, note that if $h(x) = H(f(x)) \leq f(x)$,
then since $\C_l$ is flat either $r(l) = f(c)(l)$ or $r(l) = 0$, and
likewise for $s(l)$. The polynomial $h'(x) = J'(r,r,f(x)) = r\wedge f(x)$
therefore has $h'(c) = r(l)$ and $h'(d) = s(l)$. Therefore in cases where $h(x)
\leq f(x)$, taking $\rho = r$ and $F = \id$ is sufficient.

\Case{$\wedge$}
If $h(x) = u\wedge f(x)$, then $h(x)\leq f(x)$, so by the above remarks,
take $\rho = r$.

\Case{$J$}
If $h(x) = J(f(x),u,v)$, then $h(x)\leq f(x)$, so by the remarks at the
start of the cases let $\rho = r$. If $h(x) = J(u,f(x),v)$, then let $\rho =
K(r,f(c),r)$. Since many of the later cases are similar to this, we will
carefully prove that $h'(x) = J'(r,\rho,f(x))$ satisfies $(h'(c), h'(d)) =
(r,s)$. We have
\[
  h(x) 
  = J(u,f(x),v)
  = (u\wedge f(x)) \vee (u\wedge \partial f(x) \wedge v).
\]
This yields the following table (assume that $r(l)\neq 0$, since $h'(x)(l) =
0$ otherwise).
\[ \begin{array}{l|l|l|l}
  r             & \rho = K(r,f(c),r) & h'(c)            & h'(d)           \\ \hline
  f(c)          & r                  & r\wedge f(c) = r & r\wedge f(d) =s \\
  \partial f(c) & f(c) = \partial r  & r                & r
\end{array} \]
The only possibly problematic case is when $r(l) = \partial f(c)(l)$, but in
this case we have that $s(l) = \partial f(d)(l)$, so $r(l) =
e_2(r,f(c),r)(l)$ and $s(l) = e_2(r,f(c),s)(l)$, contradicting hypothesis
(2) in the statement of the lemma. It follows that $h'(c) = r$ and $h'(d) =
s$.

If $h(x) = J(u,v,f(x))$, then $h(c)(l)$ and $h(d)(l)$ agree whenever $u(l) =
v(l)$ and can only possibly differ when $u(l) = \partial v(l)$. Hence, if
$h(c)\neq h(d)$, then $e_2(u,v,h(c))\neq e_2(u,v,h(d))$, contradicting
hypothesis (2) again.

\Case{$J'$}
If $h(x) = J'(f(x),u,v)$, then $h(x)\leq f(x)$, so by the remarks at the
start of the cases let $\rho = r$. If $h(x) = J'(u,f(x),v)$ or $h(x) =
J'(u,v,f(x))$, let $\rho = K(r,f(c),r)$. An argument similar to the one in
Case $J$ will work.

\Case{$K$}
If $h(x) = K(f(x),u,v)$, then let $\rho = K(r,f(c),r)$. If we have $h(x) =
K(u,f(x),v)$, then let $\rho = K(r,u,r)$. If $h(x) = K(u,v,f(x))$, then let
$\rho = K(r,u,r)$. Arguments similar to the one in Case $J$ will work.

\Case{$E\in \mathcal{L}\cup \mathcal{R}$}
If $g(x) = E(f(x), u, v)$ or $g(x) = E(u, f(x), v)$ then hypothesis (3) implies that
when $\C_l$ is of machine type and $f(c)(l) = g(c)(l)$ then $c(l)$
is a configuration element and therefore $g(c)(l) = 0$. Thus in this case,
$g(c) = g(d)$, a contradiction. Thus we need only examine $g(x) = E(u, v,
f(x))$. In this case, $g'(x) = J'(r, r, f(x))$ clearly works.

\Case{$U_E^i$ for $E\in \mathcal{L}\cup \mathcal{R}$ and $i\in \{0,1\}$}
This case is quite similar to the previous one. Use the fact that $c(l)$ and
$d(l)$ only differ on sequential and machine $\C_l$, that
$U_E^i(w,x,y,z)\approx 0$ on sequential SI's, and that
\[
  U_E^i(u,v,w,x) = 0 
  \qquad \text{except for} \qquad
  U_E^0(v,u,v,x) = E(u,v,x) = U_E^1(u,u,v,x)
\]
in the machine SI's.
\end{claimproof}

The polynomial $g(x)$ is primitive and therefore generated by fundamental
translations. It follows that there is some fundamental translation $G(x)$
such that $g(x) = G(f(x))$. Apply the above claim to $g(x) = G(f(x))$ to
get that there is $\rho\in B$ and $F(x)\in \mathcal{C}$ such that the
polynomial $g'(x) = J'(g(c),\rho,F(f(x)))$ satisfies $(g(c),g(d)) =
(g'(c),g'(d))$.

Since $g(x) = G(f(x))$ is a primitive polynomial, $f(x)$ is also primitive.
Therefore there is a fundamental translation $H(x)$ such that $f(x) =
H(h(x))$. Apply the above claim to the $f(x)$ in $g'(x) =
J'(g(c),\rho,F(f(x)))$ from the above paragraph to get that there is
$\rho'\in B$ and $F'(x)\in \mathcal{C}$ such that the polynomial
\[
  g''(x) = J'(g(c),\rho,F(J'(f(c),\rho',F'(h(x)))))
\]
satisfies $(g''(c),g''(d)) = (g'(c),g'(d)) = (g(c),g(d))$. The second claim
will show how to reduce this polynomial to the form required by the
conclusion of the lemma.

\begin{claim}
If $h(x) = J'(u,v,F(J'(p,q,E(f(x)))))$ for constants $u,v,p,q\in B$ and
$F(x), E(x)\in \mathcal{C}$ then there is some $\rho\in B$ and $E_1\in
\mathcal{C}$ such that 
\[
  h'(x) = J'(h(c),\rho,E_1(f(x)))
\]
has $h'(c) = h(c)$ and $h'(d) = h(d)$.
\end{claim}
\begin{claimproof}
Our first task will be to find another polynomial that agrees with
$F(J'(p,q,E(f(x))))$ on $\{c,d\}$ but has the form $J'(r_1,\rho_1,G(f(x)))$
for some $r_1,\rho_1\in B$.

If $h_1(x) = G_1(J'(p,q,f_1(x)))$ where $G_1(x) = G_1'(a_1,b_1,x)$ for
$G_1'\in \mathcal{L}\cup\mathcal{R}$, then there is $\rho_1\in B$ such that
$h_1'(x) = J'(h_1(c),\rho_1,G_1(f(x)))$ has $h_1'(c) = h_1(c)$ and $h_1'(d)
= h_1(d)$.  To see this, let $\rho_1 = G_1(q)$. We have
\[
  h_1(x) 
  = G_1(J'(p,q,f_1(x))) 
  = G_1( (p\wedge\partial q)\vee (p\wedge q\wedge f_1(x)) ),
\]
and $G_1(\partial x) = \partial G_1(x)$ (this last equation is true because
$G_1'\in \mathcal{L}\cup \mathcal{R}$). Therefore $h_1'(x) =
J'(h(c),G_1(q),G_1(f_1(x)))$ agrees with $h_1(x)$ on $\{c,d\}$.

By repeatedly applying the result of the above paragraph to
$F(J'(p,q,E(f(x))))$, (and using the fact that $F$ is a composition of
translations of the form $F_i(a_i,b_i,x)$ for $F_i\in
\mathcal{R}\cup\mathcal{L}$), we obtain a polynomial of the form
$J'(r_1,\rho_1,G(f(x)))$ that agrees with $F(J'(p,q,E(f(x))))$ on
$\{c,d\}$.

At this point, we can take the polynomial $h(x) =
J'(u,v,F(J'(p,q,E(f(x)))))$ in the statement of the claim and produce a
polynomial 
\[
  h_1(x) = J'(u,v,J'(r_1,\rho_1,G(f(x))))
\]
for some $r_1,\rho_1\in B$ and $G\in mathcal{C}$ such that $(h_1(c),h_1(d))
= (h(c),h(d))$. 

Next, we will show that there is some $\rho\in B$ such that $h'(x) =
J'(h(c),\rho,G(f(x)))$ satisfies $(h'(c),h'(d)) = (h_1(c),h_1(d)) =
(h(c),h(d))$. Let $\rho = K(h(c),v,\rho_1)$. We have
\begin{align*}
  h_1(x)
  & = J'(u,v,J'(r_1,\rho_1,G(f(x)))) \\
  & = (u\wedge v\wedge r_1 \wedge \rho_1 \wedge G(f(x))) \vee (u\wedge
    v\wedge r_1\wedge \partial \rho_1) \vee (u\wedge \partial v).
\end{align*}
This gives us the following table of cases (as usual, assume that $r(l)\neq
0$ since $h_1(x)(l) = 0$ otherwise).
\[ \begin{array}{l|l|l|l|c}
  h_1(c)=h(c)         & \rho = K(h(c),v,q)      & h'(c)              & h'(d)              & h(c)\neq h(d)  \\ \hline
  v = \rho_1          & h(c)                    & h(c)\wedge G(f(c)) & h(c)\wedge G(f(d)) & \text{Y}       \\
  v = \partial \rho_1 & \rho_1 = \partial h(c)  & h(c)               & h(c)               & \text{N}       \\
  \partial v          & v = \partial h(c)       & h(c)               & h(c)               & \text{N}
\end{array} \]
In the case where $v(l) = \rho_1(l)$ we also have that $h(c)(l) =
G(f(c))(l)$ and $h(d)(l) = G(f(d))(l)$, so the table above indicates that
$h'(c) = h(c)$ and $h'(d) = h(d)$.
\end{claimproof}

Applying this claim to the previously computed 
\[
  g''(x) = J'(g(c),\rho,F(J'(f(c),\rho',F'(h(x)))))
\]
produces a polynomial $g_1(x) = J'(g(c),\rho_1,F_1(h(x)))$ such that 
\[
  (g_1(c),g_1(d)) = (g''(c),g''(d)) = (g(c),g(d)).
\]
Repeating this argument with $g_1(x)$ proves the lemma.
\end{proof}
%===========================================================================}}}

If $a,b\in B$ differ at a coordinate that is sequential, then the next lemma
proves that there is some polynomial that maps $(a,b)$ coordinatewise into
the monoliths of the $\C_l$ (the subdirect factors of $\B$) and does not
collapse $(a,b)$.

%==========================================================================={{{
\begin{lem} \label{lem:seq produce c d}
There is a finite set of terms $P$ depending only on $\V(\A'(\T))$ such that
if $a,b\in B$ are distinct, $e_i(\overline{n},a) = e_i(\overline{n},b)$ for
all $i\in \{0,1,2\}$ and all $\overline{n}\in B^2\cup B$, and there is $p\in
B$ with $p\cdot a\neq p\cdot b$ or $a\cdot p\neq b\cdot p$, then there is
$t(\overline{y},x)\in P$ and $\overline{m}\in B^n$ with the property that if
$c = t(\overline{m},a)$ and $d = t(\overline{m},b)$ then
\begin{itemize}
  \item $c\neq d$,
  \item $x\cdot c = x\cdot d$,
  \item $c\cdot x = d\cdot x$,
  \item $I(c) = I(d)$,
  \item $F(x,y,c) = F(x,y,d)$,
  \item $F(x,c,y) = F(x,d,y)$, and
  \item $F(c,x,y) = F(d,x,y)$
\end{itemize}
for $F\in \mathcal{L}\cup \mathcal{R}$ and for all $x,y\in B$.
\end{lem}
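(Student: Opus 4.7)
The plan is to construct $c,d$ by repeatedly applying the multiplication $(\cdot)$ to $(a,b)$, thereby pushing the pair into the monolith of every sequential subdirect factor while collapsing all other coordinates to $0$. Since $\T$ halts, let $N_0$ denote the largest index such that $\S_{N_0}\in \V(\A'(\T))$; by the results of Section~\ref{sec:modifying mckenzies argument} this $N_0$ is finite. The candidate set $P$ will consist of all iterated products
\[
  t_{\sigma}(y_1,\ldots,y_k,x) = F_k(\cdots F_2(F_1(x,y_1),y_2)\cdots,y_k)
\]
where $0\leq k\leq N_0+2$ and each $F_j$ is one of the two binary terms $u\cdot v$ or $v\cdot u$; this is a finite set depending only on $\V(\A'(\T))$.

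Given $a,b,p$ satisfying the hypotheses, assume without loss of generality that $a\cdot p\neq b\cdot p$ and set $(a_1,b_1)=(a\cdot p,b\cdot p)$. Inductively, as long as $a_k\neq b_k$, I would search for $q\in B$ with either $a_k\cdot q\neq b_k\cdot q$ or $q\cdot a_k\neq q\cdot b_k$; if such a $q$ exists extend the chain, otherwise halt and set $(c,d)=(a_k,b_k)$. The recorded sequence of directions produces a $t_\sigma\in P$ witnessing $c=t_\sigma(\overline m,a)$ and $d=t_\sigma(\overline m,b)$, and $c\neq d$ by construction. The key termination claim is that this search must halt within $k\leq N_0+2$ steps. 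At each coordinate $l$ where $a(l)\neq b(l)$, the subdirect factor $\C_l$ is, using the standing hypothesis $e_i(\overline n,a)=e_i(\overline n,b)$ for all $i,\overline n$, either a sequential $\S_{n_l}$ with $n_l\leq N_0$, a machine type SI, or one of the three small SIs classified in Lemma~\ref{lem:small SI's e_i=0}. In a sequential SI each successful multiplication strictly descends one step along $a_j\mapsto b_j\mapsto\cdots$, reaching the monolith after at most $n_l\leq N_0$ steps; in machine SIs and in the small SIs $\{0,C\}$ and $\{0,H,M_1^0\}$ the multiplication is identically zero, so the first multiplication collapses $(a(l),b(l))$ to $(0,0)$; in $\W$ only $H\cdot C=D$ is nonzero and $D$ is then absorbed by any further multiplication, so at most two steps suffice.

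The main obstacle is the componentwise verification of the seven identities at the terminal pair $(c,d)$. At a non-sequential coordinate $l$ one has $c(l)=d(l)=0$ and every identity holds trivially. At a sequential coordinate $l$, termination forces $(c(l),d(l))$ to lie in the monolith of $\S_{n_l}$, so $\{c(l),d(l)\}\subseteq\{0,\beta\}$ where $\beta$ is the unique bottom element of the sequential chain. The identities then follow from absorption properties of $\beta$: the element $\beta$ admits no nonzero product on either side within $\S_{n_l}$, giving $x\cdot\beta=\beta\cdot x=0$; $I(\beta)=0$ since $I$ fires only on $\{1,2,\mathrm{H}\}$; and every $F\in\mathcal{L}\cup\mathcal{R}$ vanishes identically on sequential SIs, so $F(x,y,\beta)=F(x,\beta,y)=F(\beta,x,y)=0$. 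Assembling these yields the seven required equalities, and the termination bound $N_0+2$ ensures that the chosen term lies in the finite set $P$.
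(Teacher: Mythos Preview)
Your approach is essentially the same as the paper's: both exploit that, since $\T$ halts, sequential SIs are bounded in size, so iterated multiplication terminates in boundedly many steps, and the resulting finite family of iterated-product terms serves as $P$. The paper packages this a bit more tersely, proving a single nilpotency claim ($x_1\cdots x_N\approx 0$ in every SI modeling $e_1(n,x)\approx 0$) and then using the right-associative convention for $(\cdot)$ to reduce $P$ to just two shapes, $y_1\cdots y_M\cdot x$ and $y_1\cdots y_{M-1}\cdot x\cdot y_M$; your $P$ is larger but equally valid.

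There is one imprecision worth fixing. You assert that at every non-sequential coordinate one has $c(l)=d(l)=0$, but this can fail at a $\W$-coordinate if the search halts already at step~$1$: one may have $\{c(l),d(l)\}=\{D,0\}$ there. This does not actually damage the argument, since in $\W$ the element $D$ is absorbing for $(\cdot)$, and $I$ and every $F\in\mathcal L\cup\mathcal R$ vanish identically on $\W$; so the seven required identities still hold. You should replace the blanket claim $c(l)=d(l)=0$ with the weaker (and sufficient) observation that at non-sequential coordinates either $c(l)=d(l)$ or $\{c(l),d(l)\}\subseteq\{0,D\}$, and then check the bullets directly. A similarly small correction: your justification ``$I(\beta)=0$ since $I$ fires only on $\{1,2,\mathrm H\}$'' is phrased in terms of $\A'(\T)$ rather than the sequential SI; the clean reason is simply that $I\equiv 0$ in sequential algebras by definition.
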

\begin{proof}
Let $\B\leq \prod_{l\in L} \C_l$ be a subdirect representation of $\B$ by
subdirectly irreducible algebras. The subdirectly irreducible algebras in
$\V(\A'(\T))$ can be divided into two groups: either $\C_l\models
e_i(\overline{n},x)\approx x$ for some $i\in \{0,1,2\}$ and some
$\overline{n}\in C_l^2\cup C_l$ or $\C_l\models e_i(\overline{n},x)\approx
0$ for all $i\in \{0,1,2\}$. Since $e_i(\overline{n},a) =
e_i(\overline{n},b)$, the projections $a(l)$ and $b(l)$ must agree on all
factors that satisfy $\C_l\models e_i(\overline{n},x)\approx x$ for some $i$
and some $\overline{n}$, and can only possibly disagree on factors
satisfying $\C_l\models e_i(\overline{n},x)\approx 0$ for all $i$.

\begin{claim}
There is a finite number $N\in \NN$ such that for all $l\in
L$, if $\C_l\models e_1(n,x)\approx 0$ then
\[
  \C_l\models x_1\cdot x_2\cdots x_{N-1}\cdot x_N \approx 0.
\]
\end{claim}
\begin{claimproof}
First recall that since $\T$ halts, there are only finitely many subdirectly
irreducible algebras, all finite. Therefore if $\C_l$ does not model the
identity in the claim, there must exist nonzero elements $r,s\in C_l$ such
that $r\cdots r\cdot s = s$. Considering $\C_l$ as a quotient of a product
of subalgebras of $\A'(\T)$, this means that there is some coordinate of the
preimages (under the quotient map) of $r$ and $s$ such that $(r(i),s(i))\in
\{(1,C), (2,D)\}$. Therefore $e_1(r,s)\neq 0$, contradicting our assumption
that $\C_l\models e_1(n,x)\approx 0$. Let $S$ be a finite set containing a
representative of each isomorphism type of the subdirectly irreducible
algebras of $\V(\A'(T))$, and for $\C\in S$, let $n_{\C}\in \NN$ be minimal
such that $\C\models x_1\cdots x_{n_{\C}} \approx 0$. Taking $N = \max \{
n_{\C}\mid \C\in S \}$ completes the proof of the claim.
\end{claimproof}

Since the product $(\cdot)$ associates to the left, every polynomial of the
form $f(x) = y_1\cdots y_m \cdot x \cdot y_{m+1}\cdots y_M$ can be rewritten
as $f(x) = y_1\cdots y_m\cdot x\cdot z$, where $z = y_{m+1}\cdots y_M$. Let
\begin{multline*}
  P = \{ f(y_1,\ldots,y_M, x) = y_1\cdots y_M\cdot x, \\
    g(y_1,\ldots,y_M,x) = y_1\cdots y_{M-1}\cdot x \cdot y_M \mid 0\leq M <
    N \}.
\end{multline*}
Thus, there is a term $t(\overline{y},x)\in P$ and constants
$\overline{m}\in B^n$ such that $t(\overline{m},a) \neq t(\overline{m},b)$
and $x\cdot t(\overline{m},a) = x\cdot t(\overline{m},b)$ and
$t(\overline{m},a)\cdot x = t(\overline{m},b)\cdot x$ for all $x\in B$.
Furthermore, since $p\cdot a\neq p\cdot b$ or $a\cdot p\neq b\cdot p$, the
term $t$ is not the identity. Therefore $t(\overline{m},x)(l)\approx 0$ when
$\C_l$ is machine (recall that machine $\C_l$ model $x\cdot y \approx 0$).
Thus for all $x,y,z\in B$ and all $F\in
\mathcal{L}\cup\mathcal{R}$,
\[
  F(t(\overline{m},z),x,y) 
  = F(x,t(\overline{m},z),y) 
  = F(x,y,t(\overline{m},z))
  = I(t(\overline{m},x))
  = 0. \qedhere
\]
\end{proof}
%===========================================================================}}}

Let the set $P$ be as in Lemma \ref{lem:seq produce c d} and define
\begin{equation} \label{defn:Gamma cdot}
  \Gamma_{(\cdot)}(w,x,y,z) = \bigvee_{t\in P\cup \{\id(x)\}} \exists
  \overline{n} \left[ w = t(\overline{n},y) \wedge x = t(\overline{n},z)
  \right].
\end{equation}
Given Lemmas \ref{lem:e_i=0 all polys=J'T} and \ref{lem:e_i=0 no maltsev
seq}, define 
\begin{equation} \label{defn:psi cdot}
  \psi_{(\cdot)}(w,x,y,z) = \exists t\left[ w = J'(w,t,y) 
  \wedge x = J'(w,t,z) \right].
\end{equation}
If $c,d\in B$ with $d<c$ satisfy the conclusion of Lemma \ref{lem:seq
produce c d}, then $c,d$ also satisfy the hypotheses of Lemma \ref{lem:e_i=0
all polys=J'T}. In this situation, $(r,s)\in \Cg^{\B}(c,d)$ if and only if
$\B\models \psi_{(\cdot)}(r,s,c,d)$. Since we will be employing a strategy
similar to the proof of Theorem \ref{thm:dpsc e_i factor}, where a general
Maltsev sequence is divided into $2$ strictly decreasing sequences, let
\begin{equation} \label{defn:psi 3}
  \psi_3(w,x,y,z) = \exists t\left[ \psi_{(\cdot)}(w,t,y,z) \wedge
  \psi_{(\cdot)}(x,t,y,z) \right].
\end{equation}

%==========================================================================={{{
\begin{thm} \label{thm:seq dpsc}
Let $a,b\in B$ be distinct and such that $b\leq a$ and $e_i(\overline{n},a)
= e_i(\overline{n},b)$ for all $i\in \{0,1,2\}$ and all $\overline{n}\in
B^2\cup B$. If one of the following
\begin{enumerate}
  \item there is $p\in B$ such that $p\cdot a\neq p\cdot b$ or $a\cdot p
    \neq b\cdot p$, or
  \item for all $u,v\in B$ and $F\in \mathcal{L}\cup \mathcal{R}$ each of
    the translations $x\cdot u$, $u\cdot x$, $I(x)$, $F(u,v,x)$, $F(u,x,v)$,
    and $F(x,u,v)$ are constant for $x\in \{a,b\}$
\end{enumerate}
holds, then the congruence $\Cg^{\B}(a,b)$ has a principal subcongruence
witnessed by the formula $\Gamma_{(\cdot)}(-,-,a,b)$ and defined by the
formula $\psi_3$:
\[
  \B\models \exists c,d \left[ c\neq d \wedge \Gamma_{(\cdot)}(c,d,a,b) 
  \wedge \Pi_{\psi_3}(c,d) \right].
\]
\end{thm}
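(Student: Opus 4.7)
The plan follows the template of Theorem~\ref{thm:dpsc e_i factor}: produce the witness pair $(c,d)$, then reduce arbitrary Maltsev chains in $\Cg^\B(c,d)$ to a canonical form that $\psi_3$ captures. Lemmas~\ref{lem:e_i=0 no maltsev seq} and~\ref{lem:e_i=0 all polys=J'T} are the key structural tools: the first forbids nontrivial strictly decreasing Maltsev chains of length two or more, while the second tightly constrains the form of any surviving single polynomial step.

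For the witness pair, in Case~(1) I invoke Lemma~\ref{lem:seq produce c d} to obtain a term $t \in P$ and constants $\overline{m}$ so that $c := t(\overline{m},a) \neq t(\overline{m},b) =: d$ satisfies the seven trivialization identities listed there; in Case~(2) those identities already hold for $(a,b)$, so I take $c = a$, $d = b$ via the identity branch of the disjunction defining $\Gamma_{(\cdot)}$. In either subcase $c \neq d$, $d \leq c$, and $\B \models \Gamma_{(\cdot)}(c,d,a,b)$; moreover $e_i(\overline{n},c) = e_i(\overline{n},d)$ for all $i, \overline{n}$, because $c$ and $d$ only disagree on factors modelling $e_i \approx 0$ for all $i$. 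Since $J'$ is fundamental, $\psi_3$ is automatically a congruence formula, reducing $\Pi_{\psi_3}(c,d)$ to showing $\B \models \psi_3(r,s,c,d)$ for every $(r,s) \in \Cg^\B(c,d)$.

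Given such a pair, witnessed by a Maltsev chain $r = r_1,\dots, r_n = s$, I form partial meets $t_i = r_1 \wedge \cdots \wedge r_i$ exactly as in the proof of Theorem~\ref{thm:dpsc e_i factor} to obtain strictly decreasing Maltsev chains from $r$ and from $s$ down to the common meet $t := r_1 \wedge \cdots \wedge r_n$. Repeated application of Lemma~\ref{lem:e_i=0 no maltsev seq} to consecutive steps forces each such chain to have length at most one. For a single surviving step $u > v$ witnessed by a polynomial $\nu$ with $\nu(c) = u$, $\nu(d) = v$, Lemma~\ref{lem:e_i=0 all polys=J'T} produces $\rho$ and a polynomial $F \in \{\id\} \cup \{F_1(p_1,q_1,F_2(\cdots))\}$ with $u = J'(u,\rho,F(c))$ and $v = J'(u,\rho,F(d))$; the trivialization of each operation from $\mathcal{L} \cup \mathcal{R}$ on $(c,d)$ then forces $F = \id$ (otherwise $F(c) = F(d)$ would collapse $u = v$), and so $\psi_{(\cdot)}(u,v,c,d)$ holds directly with witness $\rho$. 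Assembling the two half-chains via the intermediate element $t$ yields $\B \models \psi_3(r,s,c,d)$.

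The main obstacle I expect to confront is the degenerate subcase in which a half-chain has length zero, so that $r = t$ or $s = t$ and no polynomial step supplies the witness; there I must still verify the reflexive instance $\psi_{(\cdot)}(r,r,c,d)$, which reduces to producing $u \in B$ with $J'(r,u,c) = r = J'(r,u,d)$. I plan to construct such a $u$ coordinate-by-coordinate: at coordinates with $r(l) \in V \cup W$ the choice $u(l) = \partial r(l)$ forces $J'(r,u,y)(l) = r(l)$ independently of $y(l)$, and at coordinates where $r(l)$ is a positional marker (so $\partial r(l)$ is undefined) the classification from Lemma~\ref{lem:small SI's e_i=0} combined with the forced agreement $c(l) = d(l)$ off the small $e_i \approx 0$ factors lets me take $u(l) = r(l)$ with $c(l) = d(l) = r(l)$.
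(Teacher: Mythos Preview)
Your overall approach matches the paper's exactly: produce $(c,d)$ via Lemma~\ref{lem:seq produce c d} in Case~(1) and take $(c,d)=(a,b)$ in Case~(2), split an arbitrary Maltsev chain into two decreasing halves meeting at $t$, collapse each half to a single step using Lemma~\ref{lem:e_i=0 no maltsev seq}, and then apply Lemma~\ref{lem:e_i=0 all polys=J'T} together with the trivialisation of the machine operations to force $F=\id$ and obtain the $J'(\cdot,\rho,\cdot)$ form. This part is correct.

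Your handling of the degenerate half-chain case, however, has a genuine gap. You propose to verify $\psi_{(\cdot)}(r,r,c,d)$ by constructing the witness $u$ \emph{coordinate-by-coordinate}, taking $u(l)=\partial r(l)$ where possible and $u(l)=r(l)$ otherwise. But $\B$ is merely a subalgebra of $\prod_{l}\C_l$, not the full product, so an element defined coordinatewise need not lie in $B$; nothing in the setup guarantees it does. Moreover, at coordinates where $r(l)\in\{1,2,H\}$ your argument requires $c(l)=d(l)=r(l)$, which is unjustified: $r$ is an arbitrary element of $B$ and has no relation to $c$ on such coordinates.

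The repair is simpler than what you attempt. When exactly one half-chain collapses to length~$0$, say $r=t$, the other half still gives $s=J'(s,\rho',c)$ and $r=J'(s,\rho',d)$; since $r\leq s$ and the factors are flat, one checks coordinatewise that $J'(r,\rho',c)=r=J'(r,\rho',d)$ as well, so the \emph{same} $\rho'$ witnesses $\psi_{(\cdot)}(r,r,c,d)$. For the fully reflexive case $r=s$ (an arbitrary element of $B$), no such $\rho$ need exist at all; this is handled by the trivial congruence scheme $w=x$, which every congruence formula carries implicitly (a Maltsev chain of length~$0$). The paper glosses over this just as you did, but the fix is bookkeeping, not the coordinatewise construction you propose.
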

\begin{proof}
Let $\B = \prod_{l\in L} \C_l$ be a subdirect representation of $\B$ by
subdirectly irreducible algebras $\C_l$. If (1) holds, then from Lemma
\ref{lem:seq produce c d} the pair $(a,b)$ differs at a coordinate that is
sequential, and $(a,b)(l)$ lies outside of the monolith of some sequential
$\C_l$. Find $t\in P$ and constants $\overline{m}$ such that if $c =
t(\overline{m},a)$ and $d = t(\overline{m},b)$ then
\begin{itemize}
  \item $c\neq d$,
  \item $x\cdot c = x\cdot d$,
  \item $I(c) = I(d)$
  \item $c\cdot x = d\cdot x$,
  \item $F(x,y,c) = F(x,y,d)$,
  \item $F(x,c,y) = F(x,d,y)$, and
  \item $F(c,x,y) = F(d,x,y)$.
\end{itemize}
If (2) holds, then the pair $(a,b)$ differ at a coordinate that is 
sequential, but $(a,b)(l)$ lies in the monolith of each sequential
$\C_l$. Let $c = a$ and $d = b$. In both (1) and (2), $\B\models
\Gamma_{(\cdot)}(c,d,a,b)$ and $c$ and $d$ satisfy the hypotheses of Lemma
\ref{lem:e_i=0 all polys=J'T}.

Since $b\leq a$ and the operations of $\B$ are monotonic, $d\leq c$. Suppose
now that $(r,s)\in \Cg^{\B}(c,d)$.  Then using the same argument as in the
proof of Theorem \ref{thm:dpsc e_i factor}, there are decreasing Maltsev
chains $r = r_1, \ldots, r_m = t$ and $s = s_1, \ldots, s_n = t$ with
associated primitive polynomials. Using first Lemma \ref{lem:e_i=0 all
polys=J'T} and the description of $c$ and $d$ in the preceding paragraph,
and then applying Lemma \ref{lem:e_i=0 no maltsev seq}, we have that there
are constants $\rho$ and $\rho'$ such that
\begin{align*}
  r & = J'(r,\rho,c), & t & = J'(r,\rho,d) = J'(s,\rho',d), \\
  s & = J'(s, \rho',c).
\end{align*}
Hence $\B\models \psi_{(\cdot)}(r,t,c,d) \wedge \psi_{\cdot}(s,t,c,d) =
\psi_3(r,s,c,d)$,
completing the proof.
\end{proof}
%===========================================================================}}}

Next, we move on to analyzing the case where $a, b\in B$ differ at a machine
coordinate. We will employ a strategy similar to the sequential case, and
produce from $(a,b)$ a pair $(c,d)$ such that $(c,d)(l)$ lies in the
monolith of $\C_l$ for each $l\in L$.

%==========================================================================={{{
\begin{lem} \label{lem:machine produce c d}
There are finite sets of terms $S$ and $T$ depending only on $V(\A'(\T))$
such that if $a,b\in B$ are distinct such that $b\leq a$,
$e_i(\overline{n},a) = e_i(\overline{n},b)$ for all $i\in \{0,1,2\}$ and all
$\overline{n}\in B^2\cup B$, one of
\begin{enumerate}
  \item there is $F\in \mathcal{L}\cup \mathcal{R}$ and $u,v\in B$
    such that $F(u,v,a)\neq F(u,v,b)$, or
  \item there is $F\in \mathcal{L}\cup \mathcal{R}$ and $u,v\in B$
    such that $F(u,v,I(a))\neq F(u,v,I(b))$,
  \item for all $F\in \mathcal{L}\cup \mathcal{R}$ and all $u,v\in B$,
    \begin{enumerate} 
      \item $I(a) = I(b)$,
      \item $u\cdot a = u\cdot b$ and $a\cdot u = b\cdot u$, and
      \item $F(u,v,a) = F(u,v,b)$,
    \end{enumerate}
\end{enumerate}
holds, then there is $t(\overline{y},x)\in S$ and constants $\overline{m}\in
B$ such that if $c = t(\overline{m},a)$ and $d = t(\overline{m},b)$ then for
any $n\in \NN$ and $F_1,\ldots,F_n\in \mathcal{L}\cup \mathcal{R}$ and any
$a_1,\ldots, a_{2n}\in B$ there is $G(\overline{y},x)\in T$ and
$\overline{b}\in B^m$ such that
\begin{align*}
  F_1(a_1,a_2,F_2(a_3,a_4,\ldots F_n(a_{2n-1},a_{2n},c)\ldots ))
  & = G(\overline{b},c) & \text{ and} \\
  F_1(a_1,a_2,F_2(a_3,a_4,\ldots F_n(a_{2n-1},a_{2n},d)\ldots ))
  & = G(\overline{b},d).
\end{align*}
Furthermore, if $\B\leq \prod_{l\in L} \C_l$ is a subdirect representation
of $\B$ by subdirectly irreducible algebras then $(c(l),d(l))$ lies in the
monolith of $\C_l$ for each $l\in L$.
\end{lem}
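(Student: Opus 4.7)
The plan is to mirror the strategy of Lemma \ref{lem:seq produce c d}, replacing the semigroup $(\cdot)$ with the machine operations $\mathcal{L}\cup\mathcal{R}$. The first step is to exploit that $\T$ halts in order to obtain a uniform depth bound. Since $\T$ halts, Theorem \ref{thm:McKenzie classify large SI's} gives only finitely many isomorphism classes of subdirectly irreducible algebras in $\V(\A'(\T))$, all finite, and in every machine-type SI $\mathbb{P}_N/\Theta_{(\Phi)}$ each configuration in $\Omega_N\setminus\Gamma$ reaches the halting state (which lies in $\Gamma$, hence collapses to $0$) in a uniformly bounded number of steps; on sequential elements of $\Sigma_N$ and on the small SI's classified in Lemma \ref{lem:small SI's e_i=0}, every operation in $\mathcal{L}\cup\mathcal{R}$ is identically $0$. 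Hence there exists $N_0\in\NN$, depending only on $\V(\A'(\T))$, such that any composition of $N_0$ operations from $\mathcal{L}\cup\mathcal{R}$ applied to any element of any SI produces $0$.

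Next I would define $S$ and construct $(c,d)$ by cases on the hypothesis. In case (1) take $t(y_1,y_2,x)=F(y_1,y_2,x)$ with $\overline{m}=(u,v)$; in case (2) take $t(y_1,y_2,x)=F(y_1,y_2,I(x))$ with $\overline{m}=(u,v)$; in case (3) take $t(x)=x$. Let $S$ be the finite collection of these terms as $F$ ranges over $\mathcal{L}\cup\mathcal{R}$. Using $e_i(\overline{n},a)=e_i(\overline{n},b)$ together with the description of small SI's in Lemma \ref{lem:small SI's e_i=0} and of sequential SI's in Section \ref{sec:modifying mckenzies argument}, $a$ and $b$ can differ only at coordinates $l$ where $\C_l$ is of sequential or machine type; at the sequential coordinates, applying any $F\in\mathcal{L}\cup\mathcal{R}$ (or $I$) yields $0$, so in cases (1) and (2) we get $c(l)=d(l)=0$ there. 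At non-machine coordinates the pair $(c,d)$ therefore agrees, and at the machine coordinates $c(l),d(l)$ are either both $0$ or both configurations in $\Omega_N\setminus\Gamma$ (in case (3) this is exactly the content of the three assumptions in (3a)–(3c)).

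The final step is to define $T$ as the finite set of all formal compositions of at most $N_0$ operations from $\mathcal{L}\cup\mathcal{R}$ (with parameter slots for their first two arguments) together with the constant term $0$. Given an arbitrary composition
\[
    H_n(x)=F_1(a_1,a_2,F_2(a_3,a_4,\ldots F_n(a_{2n-1},a_{2n},x)\ldots)),
\]
if $n<N_0$ then $H_n$ itself lies in $T$ after reindexing parameters, and the same $\overline{b}$ makes $H_n(c)=G(\overline{b},c)$ and $H_n(d)=G(\overline{b},d)$ trivially. If $n\geq N_0$, working coordinate-wise on a subdirect representation, the depth bound forces $H_n(c)(l)=H_n(d)(l)=0$ at every coordinate $l$ where $c(l)$ or $d(l)$ is nonzero, and $H_n(c)(l)=H_n(d)(l)=0$ at coordinates where both are already $0$; in either sub-case $H_n(c)=H_n(d)=0$, so taking $G$ to be the constant $0$ term works. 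The main obstacle is the verification that at every non-machine-type coordinate $l$, the constructed $c(l)$ and $d(l)$ genuinely coincide in \emph{all three} hypothesis cases — particularly case (3), where one must rule out using (3a)–(3c) together with Lemma \ref{lem:small SI's e_i=0} that $(a(l),b(l))$ lies outside the monolith of any small or sequential $\C_l$. Once this coordinate-wise agreement is established, the two-sided equality $G(\overline{b},c)=H_n(c)$ and $G(\overline{b},d)=H_n(d)$ follows by the same argument applied simultaneously to $c$ and $d$.
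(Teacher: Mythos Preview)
Your depth-to-zero bound is false. You claim that ``in every machine-type SI $\mathbb{P}_N/\Theta_{(\Phi)}$ each configuration in $\Omega_N\setminus\Gamma$ reaches the halting state \ldots\ in a uniformly bounded number of steps,'' and hence that any $N_0$-fold composition of operations from $\mathcal{L}\cup\mathcal{R}$ is identically $0$. But ``$\T$ halts'' refers only to the empty tape; the configurations populating $\Phi$ need not be reachable from an initial configuration, and condition (4) in the description of $\Phi$ only excludes configurations that \emph{reach} a halting state. In particular $\mathcal{P}$ may satisfy $\T^n(\mathcal{P})=\mathcal{P}$ for some $n\geq 1$ --- the paper's own proof explicitly lists this looping case as one of the two possible monolith structures. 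In such an SI, iterating machine operations on $\mathcal{P}$ cycles forever and never reaches $0$, so for $n\geq N_0$ you cannot conclude $H_n(c)=H_n(d)=0$, and your choice $G=0$ fails.

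There is a second, related gap: your set $S$ (one application of $F$ or $F\circ I$, or the identity) does not move $(c,d)$ into the monolith at machine-type coordinates. If $a(l)$ is a configuration far from $\mathcal{P}$, then $F(u,v,a)(l)=\T(a(l))$ is just one step closer, still outside the monolith in general. The paper's construction of $S$ is different and essential: it uses residual finiteness to choose $t\in S$ so that $(c(k),d(k))$ lies \emph{in the monolith} of every machine-type factor $\C_k$. Once $(c,d)$ is in the monolith, each $F(a_1,a_2,c)(k)$ is either $0$ or $\T(c(k))$, and the monolith is either $\{\mathcal{P},0\}$ or a finite cycle $\{\mathcal{P},\T(\mathcal{P}),\ldots,\T^{n-1}(\mathcal{P}),0\}$; finiteness of $T$ then follows from the bounded cycle lengths across the finitely many SI types, not from a depth-to-zero bound. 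Without first landing in the monolith, you would also need to synchronize the reduction of $H_n$ to a bounded $G$ across coordinates with different pre-periods and periods, which your argument does not address.
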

\begin{proof}
We will begin by examining algebras whose only subdirect factors are
machine. If $\m{D}$ is a machine SI, then using the notation from the
discussion of large SI's in Section \ref{sec:SIs}, the monolith of $\m{D}$
is $\Cg^{\m{D}}(\mathcal{P},0)$, and there are two possibilities for its
structure: either
\begin{itemize}
  \item $\mathcal{T}(\mathcal{P}) = 0$, in which case the only nontrivial
    class of the monolith is $\{\mathcal{P},0\}$, or
  \item there is $N\in \NN$ such that $\mathcal{T}^N(\mathcal{P}) =
    \mathcal{T}(\mathcal{T}(\cdots \mathcal{T}(\mathcal{P})\cdots )) =
    \mathcal{P}$ (that is, the Turing machine enters a non-terminating loop),
    in which case the only nontrivial class of the monolith is $\{
    \mathcal{P}, \T(\mathcal{P}), \ldots, \T^{N-1}(\mathcal{P}),0\}$.
\end{itemize}

Let $(\C_k)_{k\in K}$ be a family of machine SI's and suppose that $\C\leq
\prod_{k\in K} \C_k$ and that $c,d\in C$ are such that $c\geq d$ and
$(c(k),d(k))$ lies in the monolith of $\C_k$ for each $k\in K$. We now make
two straightforward observations that follow from the description of the
monoliths of the machine SI's in the above paragraph and in Section
\ref{sec:SIs} and from $\V(\A'(\T))$ having finite residual bound: 
\begin{itemize}
  \item if $F\in \mathcal{L}\cup \mathcal{R}$, $k\in K$, and $a_1,a_2\in
    C_k$ then $F(a_1,a_2,c(k))\in \{0, \T(c(k))\}$ (likewise for $d$ in
    place of $c$), and

  \item the set $T' = \{ \T^i(d), \T^i(c) \mid 0\leq i < \infty \}$ is
    finite (we apply $\T$ coordinatewise).
\end{itemize}
We now define the set $T$. For each isomorphism type of a machine SI, $\D$,
let $N_{\D}$ be minimal such that $\T^{N_{\D}}(\mathcal{P}) \in \{0,
\mathcal{P} \}$, and let $N$ be the least common multiple of the $N_{\D}$.
Since $\V(\A'(\T))$ has finite residual bound, $N$ is finite. Define
\begin{multline*}
  T = \\
  \{ G_1(y_1,y_2,G_2(y_3,y_4,\ldots G_m(y_{2m-1},y_{2m},x)\ldots )) 
    \mid m\leq N, G_i\in \mathcal{L}\cup\mathcal{R} \} \cup \{\id(x)\}.
\end{multline*}
A consequence of these observations and the definition of $T$ is that for
any $n\in \NN$, $F_1,\ldots,F_n\in \mathcal{L}\cup \mathcal{R}$, and
$a_1,\ldots, a_{2n}\in C$, there is $G(\overline{y},x)\in T$ and
$\overline{b}\in C^m$ such that
\begin{align*}
  F_1(a_1,a_2,F_2(a_3,a_4,\ldots F_n(a_{2n-1},a_{2n},c)\ldots ))
  & = G(\overline{b},c) & \text{ and} \\
  F_1(a_1,a_2,F_2(a_3,a_4,\ldots F_n(a_{2n-1},a_{2n},d)\ldots ))
  & = G(\overline{b},d).
\end{align*} 
Next, we move on to the set $S$. $\V(\A'(\T))$ is residually finite, so
there is a finite set of terms $S$ depending only on $\V(\A'(\T))$ such that
for all $\C\leq \prod_{k\in K} \C_k$ (recall $(\C_k)_{k\in K}$ is a family
of machine SI's) and all $p,q\in C$ with $q < p$ there is a term $t\in S$
and constants $\overline{m}\in C^m$ such that $t(\overline{m},p)\neq
t(\overline{m},q)$ and $(t(\overline{m},p)(k),t(\overline{m},q)(k))$ lies in
the monolith of $\C_k$ for each $k\in K$. Note that the set of terms $S$ can
be taken to consist of the identity and a finite subset of terms generated
by composing operations from $\mathcal{L}\cup\mathcal{R}\cup \{ I(x) \}$. At
this point we have produced $S$ and $T$ that will work for algebras $\C$
whose subdirect factors are all machine.

We now examine algebras whose subdirect factors contain non-machine SI's.
Let $\B\leq \prod_{l\in L} \C_l$ be a subdirect representation of $\B$ by
subdirectly irreducible algebras. By the hypotheses, $a(l)\neq b(l)$ on some
machine $\C_l$. From the paragraph above, it follows that there is a term
$t\in S$ and constants $\overline{m}\in B^m$ such that
$(t(\overline{m},a),t(\overline{m},b))(l)$ lies in the monolith of $\C_l$
for all machine $\C_l$. Let $(c,d) = (t(\overline{m},a),t(\overline{m},b))$.
The term $t$ is (if hypotheses (1) or (2) hold) a composition of operations
from $\mathcal{L}\cup\mathcal{R}\cup \{ I(x) \}$ or (if hypothesis (3)
holds) the identity. Aside from the identity, terms from $S$ are constant in
SI's modeling $e_i(\overline{x},y)\approx 0$ except for machine SI's and the
$3$-element small SI $\{0,H,M_1^0\}$. Therefore $c(l) = d(l)$ on non-machine
$\C_l$ and $(c,d)(l)$ lies in the monolith of $\C_l$ for machine $\C_l$.
Applying the observations from above about the set $T$ now proves the lemma.
\end{proof}
%===========================================================================}}}

Let the sets $S$ and $T$ be as in Lemma \ref{lem:machine produce c d} and
define
\begin{equation} \label{defn:Gamma T}
  \Gamma_{\T}(w,x,y,z) = \bigvee_{t\in S} \exists \overline{n} \left[ w =
  t(\overline{n},y) \wedge x = t(\overline{n},z)
  \right].
\end{equation}
Given Lemmas \ref{lem:e_i=0 all polys=J'T} and \ref{lem:e_i=0 no maltsev
seq}, define
\begin{equation} \label{defn:psi T}
  \psi_{\T}(w,x,y,z) = \exists t\left[ \bigvee_{G\in T} \exists \overline{b}
  \left[ w = J'(w,t,G(\overline{b},y)) \wedge x = J'(w,t,G(\overline{b},z))
  \right] \right].
\end{equation}
If $c,d\in B$ satisfy the conclusion of Lemma \ref{lem:machine produce c d},
then $c,d$ also satisfy the hypotheses of Lemma \ref{lem:e_i=0 all
polys=J'T}. Then $(r,s)\in \Cg^{\B}(c,d)$ if and only if $\B\models
\psi_{\T}(r,s,c,d)$. Since we will be employing a strategy similar to the
proof of Theorems \ref{thm:dpsc e_i factor} and \ref{thm:seq dpsc}, where a
Maltsev chain is broken into $2$ decreasing segments, let
\begin{equation} \label{defn:psi 4}
  \psi_4(w,x,y,z) = \exists t\left[ \psi_{\T}(w,t,y,z) \wedge
  \psi_{\T}(x,t,y,z) \right].
\end{equation}

%==========================================================================={{{
\begin{thm} \label{thm:machine dpsc}
Let $a,b\in B$ be distinct and such that $b\leq a$ and $e_i(\overline{n},a)
= e_i(\overline{n},b)$ for all $i\in \{0,1,2\}$ and all $\overline{n}\in
B^2\cup B$. If
\begin{enumerate}
  \item there is $F\in \mathcal{L}\cup \mathcal{R}$ and $u,v\in B$
    such that $F(u,v,a)\neq F(u,v,b)$, or
  \item there is $F\in \mathcal{L}\cup \mathcal{R}$ and $u,v\in B$
    such that $F(u,v,I(a))\neq F(u,v,I(b))$,
  \item for all $F\in \mathcal{L}\cup \mathcal{R}$ and all $u,v\in B$,
    \begin{enumerate} 
      \item $I(a) = I(b)$,
      \item $u\cdot a = u\cdot b$ and $a\cdot u = b\cdot u$, and
      \item $F(u,v,a) = F(u,v,b)$,
    \end{enumerate}
\end{enumerate}
holds, then the congruence $\Cg^\B(a,b)$ has a principal subcongruence
witnessed by $\Gamma_{\T}(-,-,a,b)$ and defined by $\psi_4$. In symbols,
\[
  \B\models \exists c,d \left[ c\neq d \wedge \Gamma_{\T}(c,d,a,b) \wedge
  \Pi_{\psi_4}(c,d) \right].
\]
\end{thm}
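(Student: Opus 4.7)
The plan is to mirror the proof of Theorem \ref{thm:seq dpsc}, substituting Lemma \ref{lem:machine produce c d} for Lemma \ref{lem:seq produce c d} and $\psi_{\T}$ for $\psi_{(\cdot)}$. First, under either hypothesis on $(a,b)$, apply Lemma \ref{lem:machine produce c d} to obtain a term $t \in S$ and constants $\overline{m}$ such that setting $c = t(\overline{m},a)$ and $d = t(\overline{m},b)$ gives $c \neq d$ with $d \leq c$ (by monotonicity of all fundamental operations) and with the property that for every composition $F_1(\cdots F_n(\cdots,c)\cdots)$ of operations from $\mathcal{L} \cup \mathcal{R}$ there is some $G(\overline{z},x) \in T$ and constants $\overline{b}$ so that $F_1(\cdots F_n(\cdots,c)\cdots) = G(\overline{b},c)$, and similarly for $d$. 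By the definition of $\Gamma_{\T}$ in \eqref{defn:Gamma T}, we have $\B \models \Gamma_{\T}(c,d,a,b)$.

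Next, I would verify that $c,d$ satisfy the three hypotheses of Lemma \ref{lem:e_i=0 all polys=J'T}: $d \leq c$ holds by construction; since $e_i(\overline{n},a) = e_i(\overline{n},b)$ for every $\overline{n}$ and every $i$, the pair $(c,d)$ can differ only on subdirect factors $\C_l$ that model $e_i(\overline{y},x) \approx 0$ for all $i$, and among these only the machine type factors are not already collapsed by the term $t$; and the conclusion of Lemma \ref{lem:machine produce c d} places each $(c(l),d(l))$ inside the monolith of $\C_l$, as required.

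Now take $(r,s) \in \Cg^{\B}(c,d)$. As in Theorem \ref{thm:dpsc e_i factor}, meet a Maltsev chain from $r$ to $s$ progressively from each end to split it into two strictly decreasing chains joining $r$ and $s$ to some common $t \leq r \wedge s$, each with associated polynomials generated by fundamental translations. By Lemma \ref{lem:e_i=0 no maltsev seq} (applicable because $c,d$ satisfy its hypotheses), each such decreasing chain has length at most one, so there is a single polynomial $g(x)$ of $\B$ with $g(c) = r$ and $g(d) = t$, and similarly a polynomial giving $(s,t)$. Applying Lemma \ref{lem:e_i=0 all polys=J'T} rewrites each such $g(x)$ in the form $J'(r,\rho,F(x))$ where $F$ is either the identity or a composition of operations from $\mathcal{L} \cup \mathcal{R}$.

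The crux of the argument, and the only place where this proof departs from Theorem \ref{thm:seq dpsc}, is converting the arbitrary composition $F$ into a member of the finite set $T$ appearing in the definition of $\psi_{\T}$ in \eqref{defn:psi T}. This is exactly the output clause of Lemma \ref{lem:machine produce c d}: there exist $G \in T$ and constants $\overline{b}$ with $F(c) = G(\overline{b},c)$ and $F(d) = G(\overline{b},d)$. Substituting yields $r = J'(r,\rho,G(\overline{b},c))$ and $t = J'(r,\rho,G(\overline{b},d))$, which by \eqref{defn:psi T} is precisely $\B \models \psi_{\T}(r,t,c,d)$; the analogous computation gives $\B \models \psi_{\T}(s,t,c,d)$. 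By \eqref{defn:psi 4} we conclude $\B \models \psi_4(r,s,c,d)$. Since $\psi_4$ is built from congruence formulas, it is itself a congruence formula, so $\B \models \Pi_{\psi_4}(c,d)$, and the theorem follows.
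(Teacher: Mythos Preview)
Your proposal is correct and follows essentially the same approach as the paper: invoke Lemma~\ref{lem:machine produce c d} to produce $(c,d)$, verify the hypotheses of Lemma~\ref{lem:e_i=0 all polys=J'T}, split an arbitrary Maltsev chain into two decreasing halves, collapse each half to a single polynomial via Lemma~\ref{lem:e_i=0 no maltsev seq}, and then rewrite that polynomial in the form $J'(r,\rho,G(\overline{b},x))$ with $G\in T$. The only cosmetic difference is that the paper applies Lemma~\ref{lem:e_i=0 all polys=J'T} before Lemma~\ref{lem:e_i=0 no maltsev seq} rather than after, but either order works since Lemma~\ref{lem:e_i=0 no maltsev seq} applies to arbitrary polynomials.
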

\begin{proof}
By hypothesis, Lemma \ref{lem:machine produce c d} holds. Let $T$ and $S$ be
the finite sets of terms and $c,d\in B$ be the elements guaranteed by the
conclusion of Lemma \ref{lem:machine produce c d}. Then there is $t\in S$
and $\overline{m}\in B^n$ such that $c = t(\overline{m},a)$ and $d =
t(\overline{m},b)$. Furthermore, if
\begin{multline*}
  F(x)\in \{\text{id(x)}\} \cup \{ F_1(a_1,b_1,F_2(a_2,b_2,\cdots
  F_n(a_n,b_n,x)\cdots )) \mid \\
  n\in \NN, F_i\in \mathcal{L}\cup \mathcal{R}, \text{ and } a_i, b_i\in
    B\},
\end{multline*}
then there is $G\in T$ and $\overline{b}\in B^n$ such that $F(c) =
G(\overline{b},c)$ and $F(d) = G(\overline{b},d)$ (the existence of such
elements is the conclusion of Lemma \ref{lem:machine produce c d}). Thus
$\B\models \Gamma_{\T}(c,d,a,b)$ and $c$ and $d$ satisfy the hypotheses of
Lemma \ref{lem:e_i=0 all polys=J'T}.

Since $b\leq a$ and the operations of $\B$ are monotone, $d\leq c$. Suppose
now that $(r,s)\in \Cg^{\B}(c,d)$. Using the same argument as in the proof
of Theorem \ref{thm:dpsc e_i factor}, there are decreasing Maltsev chains $r
= r_1, \ldots, r_m = t$ and $s = s_1, \ldots, s_n = t$ with associated
primitive polynomials. Using first Lemma \ref{lem:e_i=0 all polys=J'T}, and
then Lemma \ref{lem:e_i=0 no maltsev seq}, we have that there are constants
$\rho,\rho'\in B$ such that
\begin{align*}
  r & = J'(r,\rho,G(\overline{b},c)), & t 
    & = J'(r,\rho,G(\overline{b},c)) = J'(s,\rho',G'(\overline{b}',d)), \\
  s & = J'(s, \rho',G'(\overline{b}',c))
\end{align*}
for some $G,G'\in T$ and constants $\overline{b},\overline{b}'\in B^n$.
Hence $\B\models \psi_{\T}(r,t,c,d) \wedge \psi_{\T}(s,t,c,d)$, completing
the proof.
\end{proof}
%===========================================================================}}}

The last case where $a, b\in B$ differ at a coordinate that is small but
that does not satisfy $\exists \overline{n} [ e_i(\overline{n},x)\approx x
]$ remains. From Lemma \ref{lem:small SI's e_i=0}, we know that there are
only $3$ isomorphism types for such SI's. If the coordinate is isomorphic to
$\{0,C\}$, then the lemmas used in the sequential case apply. We are
therefore concerned with the remaining two isomorphism types. To this end,
let
\begin{multline} \label{defn:Gamma I}
  \Gamma_I(w,x,y,z) = \\
  \exists u,v \left[ 
    \left( u = I(y) \wedge v = I(z) \wedge \Gamma_{(\cdot)}(w,x,u,v) \right)
    \vee \Gamma_{(\cdot)}(w,x,y,z) \right].
\end{multline}

%==========================================================================={{{
\begin{lem} \label{lem:small dpsc}
Suppose that $a,b\in B$ are distinct, but that $I(x)$ is the only
fundamental operation that distinguishes them. Then the congruence
$\Cg^{\B}(a,b)$ has a principal subcongruence witnessed by
$\Gamma_I(-,-,a,b)$ and defined by $\psi_3$ (see \eqref{defn:psi 3} and
\eqref{defn:Gamma I}):
\[
  \B \models \exists c,d \left[ c\neq d \wedge \Gamma_I(c,d,a,b) \wedge
  \Pi_{\psi_3}(c,d) \right].
\]
\end{lem}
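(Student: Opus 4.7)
The plan is to follow the template of Theorem \ref{thm:seq dpsc} with $c = I(a)$ and $d = I(b)$. Taking the identity term from the set $P \cup \{\id\}$ inside the $\Gamma_{(\cdot)}$ conjunct immediately gives $\B \models \Gamma_I(c,d,a,b)$, and $c \neq d$ since $I(a) \neq I(b)$. As in the opening of Section \ref{sec:T halts}, I may reduce to $b \leq a$, so monotonicity of $I$ gives $d \leq c$.

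The first key step is to pinpoint the coordinates where $c$ and $d$ can differ. By Lemma \ref{lem:small SI's e_i=0} together with the SI classification in Section \ref{sec:modifying mckenzies argument}, $I$ vanishes on every sequential-type SI as well as on the small SI's $\{0,C\}$ and $\W$, while at every small SI satisfying $\exists \overline{n}[e_i(\overline{n},x) \approx x]$ the assumption that $e_i$ does not distinguish $a$ and $b$ forces $a(l) = b(l)$. At a machine-type coordinate, if $I(a)(l) \neq I(b)(l)$ then $a(l), b(l)$ would be distinct elements of $\Sigma_N$ and $I(a)(l), I(b)(l)$ would be distinct initial configurations; some translation in $\mathcal{L} \cup \mathcal{R}$ would then separate them, contradicting the negative hypothesis that case (2) of Theorem \ref{thm:machine dpsc} does not apply. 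Hence the only coordinates with $c(l) \neq d(l)$ are those with $\C_l \cong \{0,H,M_1^0\}$, where $(c(l), d(l)) = (M_1^0, 0)$, which is the unique nontrivial class of the monolith of $\C_l$. Consequently $(c,d)$ satisfies the three hypotheses of Lemma \ref{lem:e_i=0 all polys=J'T}: $d \leq c$, $e_i(\overline{n},c) = e_i(\overline{n},d)$ for all $i$ and $\overline{n}$ (since $e_i \equiv 0$ on these small SI's), and $\Cg^{\C_l}(c(l),d(l))$ lies in the monolith.

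For the main argument, given any polynomial $g(x)$ with $g(c) \neq g(d)$, Lemma \ref{lem:e_i=0 all polys=J'T} produces $\rho \in B$ and a term $F(x)$ -- either $\id$ or a nested composition of operations from $\mathcal{L} \cup \mathcal{R}$ -- such that $g'(x) = J'(g(c),\rho,F(x))$ agrees with $g$ on $\{c,d\}$. Since every operation in $\mathcal{L} \cup \mathcal{R}$ is identically zero on $\{0,H,M_1^0\}$, a nontrivial $F$ would force $F(c)(l) = 0$ at exactly the distinguishing coordinates, so $g'(c)(l) = 0 \neq g(c)(l)$, a contradiction. Hence $F = \id$ and $g'(x) = J'(g(c),\rho,x)$, which is precisely the shape of $\psi_{(\cdot)}$. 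Combining this with Lemma \ref{lem:e_i=0 no maltsev seq} to collapse any decreasing Maltsev chain in $\Cg^{\B}(c,d)$ to a single step, and splitting a general $(r,s) \in \Cg^{\B}(c,d)$ into two such chains meeting at a common bottom (as in the proof of Theorem \ref{thm:dpsc e_i factor}), we obtain $\B \models \psi_3(r,s,c,d)$, and hence $\B \models \Pi_{\psi_3}(c,d)$. The main obstacle is the coordinate analysis in the second paragraph: confining the difference between $c$ and $d$ to $\{0,H,M_1^0\}$-type SI's requires carefully threading together the negative hypotheses carried over from the previous cases of the main theorem with the vanishing of $I$ on the remaining isomorphism classes.
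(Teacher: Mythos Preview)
Your approach is correct and arrives at the same $(c,d)=(I(a),I(b))$ as the paper, but you are working much harder than necessary. The paper's proof is essentially one line: it observes that $a'=I(a)$ and $b'=I(b)$ satisfy the hypotheses of Theorem~\ref{thm:seq dpsc} (specifically case~(2), since $(\cdot)$, $I$, and all $\mathcal{L}\cup\mathcal{R}$ translations vanish on the range of $I$ at the relevant coordinates), and then simply invokes that theorem to get $\Gamma_{(\cdot)}(c,d,a',b')\wedge\Pi_{\psi_3}(c,d)$, which unwinds to $\Gamma_I(c,d,a,b)\wedge\Pi_{\psi_3}(c,d)$. You instead re-prove the relevant instance of Theorem~\ref{thm:seq dpsc} from scratch via Lemmas~\ref{lem:e_i=0 all polys=J'T} and~\ref{lem:e_i=0 no maltsev seq}, together with the extra observation that the $F$ produced by Lemma~\ref{lem:e_i=0 all polys=J'T} must be the identity because $\mathcal{L}\cup\mathcal{R}$ operations are identically~$0$ on $\{0,H,M_1^0\}$. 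That extra step is a nice structural observation and does make your argument self-contained, but it duplicates work already packaged in Theorem~\ref{thm:seq dpsc}.

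One small correction to your coordinate analysis: at a machine-type coordinate with $a(l)\neq b(l)$ you have $b(l)=0$ (since $b\leq a$ in a flat semilattice), so $b(l)\notin\Sigma_N$; your phrasing ``distinct elements of $\Sigma_N$'' is off. More substantively, your appeal to ``the negative hypothesis that case~(2) of Theorem~\ref{thm:machine dpsc} does not apply'' is more delicate than you suggest --- exhibiting an $\mathcal{L}\cup\mathcal{R}$ translation that separates $I(a)$ from $I(b)$ at a machine coordinate requires finding suitable $u,v\in B$, which is not automatic. The cleaner route (and what the paper implicitly uses) is that in case~(4) of Theorem~\ref{thm:V(A(T)) dpsc} every $k\in K$ already has $\C_k$ small with $e_i\approx 0$, so $a(l)=b(l)$ at all large-SI coordinates outright, with no further argument needed.
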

\begin{proof}
Let $\B\leq \prod_{l\in L} \C_l$ be a subdirect representation of $\B$ by
subdirectly irreducible algebras. If $a,b\in B$ are distinct and only
distinguished by $I(x)$, then $a(l)\neq b(l)$ if and only if $\C_l\cong \W$
or $\C_l\cong \{0,H,M_1^0\}$ (see \eqref{defn:W} and Lemma \ref{lem:small
SI's e_i=0}).

If $a' = I(a)$ and $b' = I(b)$, then $a'$ and $b'$ satisfy the hypotheses of
Theorem \ref{thm:seq dpsc} and thus $\Cg^{\B}(a',b')$ has a principal
subcongruence witnessed by $\Gamma_{(\cdot)}(-,-,a',b')$ and defined by
$\psi_3$. Therefore $\Cg^{\B}(a,b)$ has a principal subcongruence witnessed
by $\Gamma_I(-,-,a,b)$ and defined by $\psi_3$, as claimed.
\end{proof}
%===========================================================================}}}

%==========================================================================={{{
\begin{thm} \label{thm:V(A(T)) dpsc}
If $\T$ halts then $\V(\A'(\T))$ has definable principal subcongruences.
\end{thm}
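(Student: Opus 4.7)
The plan is to assemble the four cases described at the start of Section \ref{sec:T halts} into a single pair $(\Gamma,\psi)$ of congruence formulas witnessing DPSC. Given $\B \in \V(\A'(\T))$ and distinct $a',b' \in B$, I first reduce to the case $b \leq a$: without loss of generality $a'\not\leq b'$, and I set $a = a'$, $b = a'\wedge b'$, which yields $a\neq b$ and $(a,b)\in \Cg^\B(a',b')$, hence $\Cg^\B(a,b)\subseteq \Cg^\B(a',b')$. Fix a subdirect representation $\B\leq \prod_{l\in L}\C_l$ by subdirectly irreducible algebras, and let $K=\{l\in L\mid a(l)\neq b(l)\}$, which is nonempty.

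Next I run through the four cases in the order listed in the outline, appealing to the classification of SI's from Theorem \ref{thm:McKenzie classify large SI's} and Lemma \ref{lem:small SI's e_i=0}. If some $\C_k$ with $k\in K$ models $\exists\overline{n}\,[e_i(\overline{n},x)\approx x]$ for some $i\in\{0,1,2\}$, then $e_i(\overline{m},a)\neq e_i(\overline{m},b)$ for a suitable $\overline{m}$, and Theorem \ref{thm:dpsc e_i factor} supplies $(c,d)$ with $\B\models \Gamma_1(c,d,a,b)\wedge \Pi_{\psi_2}(c,d)$. Otherwise every such $\C_k$ models $e_i(\overline{y},x)\approx 0$ for all $i$, and each is of sequential type, of machine type, or isomorphic to one of $\{0,C\}$, $\{0,H,M_1^0\}$, $\W$. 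According to whether $(a,b)$ is separated by a translation of $(\cdot)$ on some sequential factor (or all $(a(l),b(l))$ already lie in the monolith of a sequential $\C_l$), by a translation of the form $F$ or $F\circ I$ with $F\in\mathcal{L}\cup\mathcal{R}$ on some machine factor, or only by $I$ on the remaining small factors, I invoke Theorem \ref{thm:seq dpsc}, Theorem \ref{thm:machine dpsc}, or Lemma \ref{lem:small dpsc} respectively to produce $(c,d)$ together with the appropriate defining formula $\psi_3$ or $\psi_4$.

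To express the resulting DPSC condition back in terms of $(a',b')$, define
\begin{align*}
  \Gamma^*(w,x,y,z) &= \Gamma_1(w,x,y,z)\vee \Gamma_{(\cdot)}(w,x,y,z)\vee \Gamma_{\T}(w,x,y,z)\vee \Gamma_I(w,x,y,z), \\
  \Gamma(w,x,y,z) &= \Gamma^*(w,x,y,y\wedge z)\vee \Gamma^*(w,x,z,y\wedge z), \\
  \psi(w,x,y,z) &= \psi_2(w,x,y,z)\vee \psi_3(w,x,y,z)\vee \psi_4(w,x,y,z).
\end{align*}
Each is a disjunction of congruence formulas and so is itself a congruence formula; in each of the four cases one disjunct of $\Gamma^*$, evaluated at the pair $(a',a'\wedge b')$ or $(b',a'\wedge b')$, furnishes the witness $(c,d)$, while the matching disjunct of $\psi$ defines $\Cg^\B(c,d)$. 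Hence $\B\models \exists c,d\,[c\neq d\wedge \Gamma(c,d,a',b')\wedge \Pi_\psi(c,d)]$, which is the DPSC condition.

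The main obstacle is verifying that the case analysis is genuinely exhaustive. When none of the first three cases applies, one must argue that $\{\C_k\mid k\in K\}$ consists only of the three small isomorphism types in Lemma \ref{lem:small SI's e_i=0}, and moreover that the only fundamental operation which can still distinguish $a$ and $b$ on them is $I$, so that Lemma \ref{lem:small dpsc} is applicable. This exhaustiveness depends essentially on $\T$ halting, via $\kappa(\A'(\T))<\omega$ and the classification of SI's developed in Section \ref{sec:modifying mckenzies argument}: together these guarantee that every coordinate on which $a$ and $b$ disagree falls into exactly one of the four cases handled above.
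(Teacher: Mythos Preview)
Your approach is essentially the paper's: assemble the four case-handling results (Theorem~\ref{thm:dpsc e_i factor}, Theorem~\ref{thm:seq dpsc}, Theorem~\ref{thm:machine dpsc}, Lemma~\ref{lem:small dpsc}) into a single pair $(\Gamma,\psi)$ by disjunction, and branch on the type of SI factor on which $a$ and $b$ disagree. Your extra step of building the reduction $b\leq a$ directly into $\Gamma$ via $\Gamma(w,x,y,z)=\Gamma^*(w,x,y,y\wedge z)\vee\Gamma^*(w,x,z,y\wedge z)$ is a clean way to make explicit something the paper leaves implicit (Theorems~\ref{thm:seq dpsc} and~\ref{thm:machine dpsc} are stated only for $b\leq a$, and the paper's proof of Theorem~\ref{thm:V(A(T)) dpsc} invokes them without restating the reduction).

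There is one genuine imprecision in your final case. You assert that when Cases~1--3 fail, ``the only fundamental operation which can still distinguish $a$ and $b$ on them is $I$,'' and hence Lemma~\ref{lem:small dpsc} applies. This is not always so. If every $k\in K$ has $\C_k\cong\{0,C\}$, then $I(a)=I(b)$ (since $I(C)=I(0)=0$ there), so Lemma~\ref{lem:small dpsc} is inapplicable; likewise for $\C_k\cong\W$ with $a(k)=D$. In these sub-cases \emph{no} fundamental translation separates $a$ from $b$, and the correct tool is hypothesis~(2) of Theorem~\ref{thm:seq dpsc}, not Lemma~\ref{lem:small dpsc}. The paper handles Case~4 by branching on the isomorphism type of $\C_k$: for $\{0,C\}$ and $\W$ it routes back to Theorem~\ref{thm:seq dpsc}, and it reserves Lemma~\ref{lem:small dpsc} for $\{0,H,M_1^0\}$ with $a(k)=H$. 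Your formulas $\Gamma$ and $\psi$ already contain the needed disjuncts $\Gamma_{(\cdot)}$ and $\psi_3$, so the fix is only to the case analysis, not to the definitions.
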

\begin{proof}
Let
\[
  \Gamma(w,x,y,z) = \Gamma_1(w,x,y,z) \vee \Gamma_{(\cdot)}(w,x,y,z) \vee
  \Gamma_{\T}(w,x,y,z) \vee \Gamma_I(w,x,y,z)
\]
(see Theorem \ref{thm:dpsc e_i factor} and equations \eqref{defn:Gamma
cdot}, \eqref{defn:Gamma T}, and \eqref{defn:Gamma I} for definitions of
these), and
\[
  \psi(w,x,y,z) = \psi_2(w,x,y,z) \vee \psi_3(w,x,y,z) \vee \psi_4(w,x,y,z)
\]
(see equations \eqref{defn:psi 2}, \eqref{defn:psi 3}, and \eqref{defn:psi
4} for definitions of these). We claim that $\V(\A'(\T))$ has definable
principal congruences witnessed by $\Gamma$ and $\psi$. In symbols,
\[
  \V(\A'(\T)) \models \forall a,b \left[ a\neq b \rightarrow \exists c,d
  \left[ c\neq d \wedge \Gamma(c,d,a,b) \wedge \Pi_{\psi}(c,d) \right]
  \right].
\]

Let $\B\in\V(\A'(\T))$ with $a,b\in B$ distinct and let $\B\leq \prod_{l\in
L} \C_l$ be a subdirect representation by subdirectly irreducible algebras.
Since $a$ and $b$ are distinct, there is some $l\in L$ such that $a(l)\neq
b(l)$. Let
\[
  K = \{ l\in L \mid a(l)\neq b(l) \}.
\]
The case distinction breaks down as follows:
\begin{enumerate}
  \item There is some $k\in K$ such that $\C_k\models
    e_i(\overline{n},x)\approx x$ for some $i\in \{0,1,2\}$ and some
    $\overline{n}\in C_k^2\cup C_k$. In this case, Theorem \ref{thm:dpsc e_i
    factor} applies.
  \item The previous case does not apply, but there is some $k\in K$ such
    that $\C_k$ is sequential. If this is the case, there is some $u\in B$
    such that $u\cdot a\neq u\cdot b$ or $a\cdot u\neq b\cdot u$, or the
    machine operations $\mathcal{L}\cup \mathcal{R}$ cannot distinguish
    between $a$ and $b$. In this case, Theorem \ref{thm:seq dpsc} applies.
  \item The previous cases do not apply, but there is some $k\in K$ such
    that $\C_k$ is machine. If this is the case, there is some machine
    operation in $\mathcal{L}\cup \mathcal{R}$ that can distinguish between
    $a$ and $b$. In this case, Theorem \ref{thm:machine dpsc} applies.
  \item The previous cases do not apply, so there must be some $k\in K$ such
    that $\C_k$ is small and models $e_i(\overline{y},x)\approx 0$ for all
    $i\in \{0,1,2\}$ (see Lemma \ref{lem:small SI's e_i=0}). If $C_k =
    \{0,C\}$ or $\C\cong \W$, then Theorem \ref{thm:seq dpsc} applies. If
    $C_k = \{0,H,M_1^0\}$ then either Theorem \ref{thm:seq dpsc} applies (if
    $a(k) = M_1^0$) or Lemma \ref{lem:small dpsc} applies (if $a(k) = H$).
\end{enumerate}

Since the SI's of $\V(\A'(\T))$ either satisfy $e_i(\overline{n},x)\approx
x$ for some $i\in \{0,1,2\}$ and $\overline{n}\in B^2\cup B$, are
sequential, are machine, or are isomorphic to one of the $3$ small algebras
given in Lemma \ref{lem:small SI's e_i=0}, this completes the proof.
\end{proof}
%===========================================================================}}}

One of the interesting applications of definable principal subcongruences is
in defining the subdirectly irreducible members of some class of algebras.
If $\mathcal{C}$ is a class of algebras with definable principal
subcongruences witnessed by congruence formulas $\Gamma$ and $\psi$, then
\[
  \mathcal{C} \models \forall a,b \left[ a\neq b \rightarrow \exists c,d
    \left[ c\neq d \wedge \Gamma(c,d,a,b) \wedge \Pi_\psi(c,d) \right]
    \right],
\]
and the sentence
\[
  \sigma = \exists r,s \left[ r\neq s \wedge \forall a,b 
    \left[ a\neq b \rightarrow \exists c,d \left[ \Gamma(c,d,a,b) 
    \wedge \psi(r,s,c,d) \right] \right] \right]
\]
defines the subdirectly irreducible algebras in $\mathcal{C}$. Baker and
Wang \cite{BakerWangDPSC} use this to prove the following theorem.

\begin{thm}[Baker, Wang \cite{BakerWangDPSC}]
\label{thm:Baker Wang DPSC fin based}
A variety $\V$ with definable principal subcongruences is finitely based
if and only if the class of subdirectly irreducible members of $\V$ is
finitely axiomatizable.
\end{thm}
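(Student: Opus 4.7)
The forward direction is the easy one: if $\V$ is finitely based, say by a finite set of equations $\Sigma_0$, then the DPSC sentence
\[
  \sigma = \exists r,s \left[ r\neq s \wedge \forall a,b \left[ a\neq b
    \rightarrow \exists c,d \left[ \Gamma(c,d,a,b) \wedge \psi(r,s,c,d)
    \right] \right] \right]
\]
defines (within $\V$) the subdirectly irreducible members of $\V$, so $\Sigma_0 \cup \{\sigma\}$ is a finite axiomatization of the SI members. The content is in the converse, and my plan would proceed by a standard compactness argument provided one sets things up with care.

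For the reverse direction, suppose the SI members of $\V$ are axiomatized by a finite set of first-order sentences $\Phi$, and let $\Sigma$ be the full set of equations of $\V$ (so $\V = \Mod(\Sigma)$). First I would encode DPSC as a single first-order sentence $\rho$ asserting ``$\Gamma$ and $\psi$ witness DPSC'', using the formula $\Pi_\psi$ that (as noted earlier in the paper) internalizes the condition that $\psi(-,-,c,d)$ defines $\Cg(c,d)$; then $\V \models \rho$, and in any model of $\rho$ the SI algebras are precisely the models of $\sigma$. Next I would observe two semantic entailments: $\Sigma \models \rho$ (since $\V$ has DPSC), and $\Sigma \cup \{\sigma\} \models \Phi$ (since every model of $\Sigma \cup \{\sigma\}$ is an SI member of $\V$, hence satisfies $\Phi$).

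Applying compactness to each of these, I would extract a finite subset $\Sigma_0 \subseteq \Sigma$ such that simultaneously $\Sigma_0 \models \rho$ and $\Sigma_0 \cup \{\sigma\} \models \Phi$. The claim then is that $\Mod(\Sigma_0) = \V$; clearly $\V \models \Sigma_0$, so I need the reverse inclusion. Let $\B \in \Mod(\Sigma_0)$. Because $\Sigma_0$ is equational, $\Mod(\Sigma_0)$ is a variety, so $\B$ is a subdirect product of SI algebras $\B_i \in \Mod(\Sigma_0)$. Each $\B_i$ satisfies $\rho$ (so DPSC holds in $\B_i$), and since $\B_i$ is SI, $\B_i \models \sigma$; hence $\B_i \models \Phi$, so $\B_i$ is an SI member of $\V$. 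Since $\V$ is closed under subdirect products, $\B \in \V$, proving $\Mod(\Sigma_0) \subseteq \V$.

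The step I expect to require the most care is verifying that $\sigma$ genuinely defines SI-ness inside any model of $\rho$, not just inside $\V$ itself, because $\Gamma$ and $\psi$ were tailored to $\V$; the point is that $\rho$ already carries all the structural content needed, and once $\rho$ holds in $\Mod(\Sigma_0)$ the definability of principal subcongruences propagates to every SI quotient. The rest is routine compactness plus the Birkhoff-style subdirect representation inside the variety $\Mod(\Sigma_0)$.
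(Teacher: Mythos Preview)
The paper does not actually prove this theorem; it is stated with attribution to Baker and Wang and used as a black box. So there is no ``paper's own proof'' to compare against.

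That said, your sketch is correct and is essentially the standard Baker--Wang argument. The forward direction is exactly as the paper indicates in the paragraph preceding the theorem, using the sentence $\sigma$. For the reverse direction, your compactness argument is the right one: encode DPSC as the single sentence $\rho$ (possible because the type is finite, so $\Pi_\psi$ is first-order), extract a finite $\Sigma_0\subseteq\Sigma$ with $\Sigma_0\models\rho$ and $\Sigma_0\cup\{\sigma\}\models\Phi$, and then argue that every SI in $\Mod(\Sigma_0)$ already lies in $\V$. The point you flag as delicate --- that $\sigma$ picks out exactly the SI algebras in any model of $\rho$, not merely in $\V$ --- is indeed the only nontrivial verification, and your reasoning handles it: since $\Gamma$ and $\psi$ are congruence formulas (a purely syntactic condition preserved in any algebra of the type), $\rho$ guarantees that $\sigma$ forces a monolith, and conversely DPSC lets any SI satisfy $\sigma$. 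One small omission worth making explicit: the argument implicitly assumes the signature is finite (so that $\Pi_\psi$ exists as a first-order formula), which is satisfied here and is the standing hypothesis in Baker--Wang.
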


\noindent In particular, if $\kappa(\V)<\omega$ then the class of
subdirectly irreducible members of $\V$ is finitely axiomatizable since
there are only finitely many of them, all finite. This observation and the
above theorem yields a corollary to Theorem \ref{thm:V(A(T)) dpsc}.

\begin{cor} \label{cor:V(A(T)) fin based}
If $\T$ halts, then $\V(\A'(\T))$ is finitely based.
\end{cor}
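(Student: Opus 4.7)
The plan is to assemble three ingredients already in hand. First, by Theorem \ref{thm:V(A(T)) dpsc}, the hypothesis that $\T$ halts gives us that $\V(\A'(\T))$ has definable principal subcongruences, witnessed by the congruence formulas $\Gamma$ and $\psi$ constructed in the proof of that theorem. Second, by the theorem following Theorem \ref{thm:McKenzie classify large SI's}, $\T$ halting also yields $\kappa(\A'(\T))<\omega$, which by definition means $\V(\A'(\T))$ has only finitely many subdirectly irreducible members, each of bounded (hence finite) cardinality. Third, Baker and Wang's Theorem \ref{thm:Baker Wang DPSC fin based} reduces finite axiomatizability of a DPSC variety to finite axiomatizability of its class of subdirectly irreducible members.

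The main step is therefore to observe that a class consisting of finitely many finite algebras, all of the same finite type, is finitely axiomatizable. I would argue this directly: fix representatives $\C_1,\ldots,\C_k$ of the isomorphism types of subdirectly irreducible members of $\V(\A'(\T))$ and let $N=\max_i |C_i|$. For each $\C_i$ the isomorphism type of $\C_i$ is captured by a single first-order sentence $\tau_i$ (existentially quantify over $|C_i|$ variables asserting they are pairwise distinct, specify the value of every fundamental operation on every tuple of these variables, and add a universal clause stating that every element equals one of the quantified variables). Then the finite disjunction $\tau_1\vee\cdots\vee\tau_k$, conjoined with the universal sentence bounding the cardinality by $N$, axiomatizes the class of subdirectly irreducible members of $\V(\A'(\T))$ inside $\V(\A'(\T))$.

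Applying Theorem \ref{thm:Baker Wang DPSC fin based} with this finite axiomatization completes the argument. I do not anticipate any genuine obstacle here: DPSC has already been established in Theorem \ref{thm:V(A(T)) dpsc}, and the residual finiteness and finite type of $\V(\A'(\T))$ reduce the hypothesis of the Baker--Wang theorem to the routine observation above. The corollary is essentially immediate from the two main theorems of the paper together with the Baker--Wang criterion.
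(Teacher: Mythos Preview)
Your proposal is correct and matches the paper's approach: the corollary is immediate from Theorem~\ref{thm:V(A(T)) dpsc}, the fact that $\kappa(\A'(\T))<\omega$ when $\T$ halts, and Theorem~\ref{thm:Baker Wang DPSC fin based}, together with the observation (which the paper states just before the corollary) that finitely many finite SI's of finite type form a finitely axiomatizable class. Your explicit description of the sentences $\tau_i$ is a harmless elaboration of what the paper leaves as a one-line remark.
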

%==========================================================================}}}1

%=========================================================================={{{1
\section{If $\T$ does not halt} \label{sec:T does not halt}
In the case where $\T$ halts, every sequential subdirectly irreducible
algebra is finite and there are only finitely many of them. In the case
where $\T$ does not halt, McKenzie \cite{McKenzieResidualBoundNotComp} and
the additions from Section \ref{sec:modifying mckenzies argument} show that
the algebra $\S_{\ZZ}$ (defined in Section \ref{sec:SIs}) is a member of
$\V(\A'(\T))$. McKenzie \cite{McKenzieTarskisFiniteBasis} uses $\S_{\ZZ}$ to
show that if $\T$ does not halt, then $\A(\T)$ is inherently nonfinitely
based. Although $\S_{\ZZ}$ is not subdirectly irreducible, it contains an
infinite subalgebra $\S_{\omega}$ which is, and every finite sequentiable SI
can be embedded in it. We will use the presence of $\S_{\ZZ}$ in
$\V(\A'(T))$ to show that if $\T$ does not halt, then $\V(\A'(\T))$ doesn't
have DPSC.

An algebra $\C$ is said to be \emph{finitely subdirectly irreducible (FSI)}
if for all $a,b,c,d\in C$ such that $a\neq b$ and $c\neq d$,
$\Cg^{\C}(a,b)\cap \Cg^{\C}(c,d) \neq \0$ (i.e. $\0$ is meet irreducible).
Every SI is FSI, but not every FSI is SI.

%==========================================================================={{{
\begin{thm} \label{thm:0 mi not defn}
The class of finitely subdirectly irreducible algebras in $\V(\A'(\T))$ is
not axiomatizable if $\T$ does not halt.
\end{thm}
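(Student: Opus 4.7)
The plan is to prove the theorem by showing that the class of FSI members of $\V(\A'(\T))$ is not closed under ultraproducts; since any class of algebras axiomatizable by a set of first-order sentences is closed under ultraproducts, this will suffice. Because $\T$ does not halt, Section~\ref{sec:modifying mckenzies argument} gives $\S_{\ZZ}\in \V(\A'(\T))$, so the subalgebra $\S_{\omega}$ also lies in $\V(\A'(\T))$; and $\S_{\omega}$ is SI (with monolith $\Cg(b_0,0)$), hence FSI. I would fix a non-principal ultrafilter $\mathcal{U}$ on $\NN$, form the ultrapower $\S^{*}=\S_{\omega}^{\NN}/\mathcal{U}$ (which lies in $\V(\A'(\T))$ since varieties are closed under ultraproducts), and show that $\S^{*}$ fails to be FSI.

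The concrete choice of witnesses is $\alpha=(a_0,a_1,a_2,\ldots)/\mathcal{U}$ and $\beta=(b_0,b_0,\ldots)/\mathcal{U}$, both nonzero in $\S^{*}$. A direct check shows that $\Cg^{\S^{*}}(\beta,0)$ contains no nontrivial pair beyond $(\beta,0)$: in $\S_{\omega}$ we have $b_0\cdot y=y\cdot b_0=0$ for every $y\in S_{\omega}$, and the remaining non-meet operations either vanish when applied to $b_0$ or reduce to meets on $\{0,b_0\}$, so no fundamental translation can separate $b_0$ from $0$ beyond the generating pair (and the same identities hold in $\S^{*}$ by preservation of first-order sentences under ultraproducts). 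It therefore suffices to prove $(\beta,0) \notin \Cg^{\S^{*}}(\alpha,0)$, for then $\Cg^{\S^{*}}(\alpha,0)\cap\Cg^{\S^{*}}(\beta,0)$ is the diagonal and two nontrivial congruences with trivial intersection witness that $\S^{*}$ is not FSI.

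The non-membership rests on two structural observations about $\S_{\omega}$ that transfer to $\S^{*}$. The first is a \emph{$0$-absorption lemma}: every polynomial $p(x)$ of $\S_{\omega}$ either satisfies $p(0)=0$ or is independent of $x$. This follows by induction on the shape of the underlying term, using that each fundamental operation of $\S_{\omega}$ is either identically $0$ or built from $\wedge$ and the binary product $(\cdot)$ (both $0$-absorbing in every argument), with the single exception that $J(x,y,z)=x\wedge y$ simply ignores its third argument. The second is a \emph{level bound}: defining $\lambda(a_k)=\lambda(b_k)=k$ on $\S_{\omega}\setminus\{0\}$, if $p$ is a polynomial of $\S_{\omega}$ whose underlying term has at most $N$ occurrences of $(\cdot)$, then $p(a_k)\neq 0$ implies $\lambda(p(a_k))\geq k-N+1$, because the product is the only non-meet operation that alters the level of a nonzero value and each productive application decreases it by exactly one.

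Granting these, suppose $(\beta,0)\in\Cg^{\S^{*}}(\alpha,0)$. By Maltsev's Lemma some polynomial $p$ of $\S^{*}$ and some $r\in S^{*}$ with $r\neq\beta$ satisfy $\{p(\alpha),p(0)\}=\{\beta,r\}$. A constant $p$ would make $\{p(\alpha),p(0)\}$ a singleton equal to $\{\beta,r\}$, forcing $r=\beta$, a contradiction, so $p$ is non-constant and the $0$-absorption lemma gives $p(0)=0$, forcing $r=0$ and $p(\alpha)=\beta$. Projecting to coordinate $k$ yields $p^{(k)}(a_k)=b_0$ for $\mathcal{U}$-almost every $k$; the level bound applied with $N$ the (fixed) number of $(\cdot)$-symbols in the term of $p$ then forces $k\leq N-1$, contradicting the non-principality of $\mathcal{U}$. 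The main obstacle is the careful inductive verification of the level bound over the shapes of the fundamental operations $\wedge$, $(\cdot)$, $T$, $J$, $J'$, $K$ and the transfer of both lemmas to $\S^{*}$; once this combinatorial bookkeeping is complete, the proof concludes as described.
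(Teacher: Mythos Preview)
Your argument is correct and takes a route parallel to, but distinct from, the paper's. Both proofs show that the class of FSI members of $\V(\A'(\T))$ fails to be closed under ultrapowers and hence cannot be first-order axiomatizable. The paper works with $\S_{\ZZ}$ (which is FSI though not SI) and argues abstractly: it isolates the first-order ``orbit'' structure $b\mapsto A^n\cdot b$, shows that in any nonprincipal ultrapower there must be two $b$-elements whose orbits meet only in $0$ (otherwise a cardinality count forces the ultrapower to be countable, hence principal), and concludes that the corresponding principal congruences intersect trivially. You instead work with the SI algebra $\S_{\omega}$ and argue concretely, exhibiting the explicit witnesses $\alpha=[(a_k)_k]$ and $\beta=[(b_0)_k]$ and running a term-complexity (``level'') bound to show no single term can send $a_k$ to $b_0$ for unboundedly many $k$. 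Your approach is more computational and yields named witnesses; the paper's is more model-theoretic and avoids the coordinate-wise bookkeeping. Both require essentially the same structural facts about the operations (in particular that $J(x,y,z)\approx x\wedge y$ and $J'\approx K\approx x\wedge y\wedge z$ in these algebras). One small correction to make when you write it out: since $T(w,x,y,z)=(w\cdot x)\wedge(y\cdot z)$ in $\S_{\omega}$, your count $N$ in the level bound must include occurrences of $T$ as well as $(\cdot)$; with that adjustment the induction goes through exactly as you describe.
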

\begin{proof}
We will use an ultrapower argument. Suppose to the contrary that the class
of finitely subdirectly irreducible algebras in $\V(\A'(\T))$ is
axiomatizable, say by $\Phi$. $\T$ does not halt if and only if $\S_\ZZ\in
\V(\A'(\T))$. Let $\S$ be an ultrapower of $\S_\omega$, so that $\S$
satisfies all first-order properties of $\S_\omega$. In particular, since
$\S_\omega\models \Phi$, we have that $\S\models \Phi$, so $\0$ is meet
irreducible in $\text{Con}(\S)$. We will now give some first-order
properties of $\S_{\omega}$ which we will make use of.

Let
\[
  A = \{ \alpha\in S_{\omega} \mid \exists \beta [ \alpha\cdot \beta\neq 0 ] \}
  \qquad \text{and} \qquad
  B = \{ \beta\in S_{\omega} \mid \exists \alpha [ \alpha\cdot \beta\neq 0 ] \}.
\]
Then in $\S_{\omega}$, for each $\alpha\in A$ there is a unique $\beta\in B$
such that $\alpha\cdot \beta\neq 0$, and for each $\beta\in B$, there is a
unique $\alpha\in A$ such that $\alpha\cdot\beta\neq 0$. This gives us that
$|A| = |B|$. We also have
\[
  A\cap B = \emptyset
  \qquad \text{and} \qquad
  S_{\omega} = A \cup B \cup \{0\}.
\]
For $b\in B$, let
\[
  A^n\cdot b = \{ \alpha_1 \cdots \alpha_m\cdot b\mid 0\leq m\leq n \text{
  and } \alpha_1,\ldots, \alpha_m \in A \}.
\]
Then $|A^n\cdot b| = n+2$. Furthermore, for $b,c\in B$,
\[
  \text{if } (A^n\cdot b) \cap (A^m\cdot c) \neq \{0\} 
  \qquad \text{then} \qquad
  b\in A^m\cdot c \text{ or } c\in A^n\cdot b.
\]
Lastly, if $F(x)$ is a fundamental translation in $\S_{\omega}$, then
$F(A^n\cdot b)\subseteq A^{n+1}\cdot b$. All of these sets and properties
are first-order definable and hold in $\S_{\omega}$, so their analogues hold
in $\S$ as well.

We will now begin to examine $\S$. For $b\in B$, define the \emph{orbit of
$b$} to be
\[
  b^A = \bigcup_{n\in \NN} A^n\cdot b.
\]
Since $|A^n\cdot b| = n+2$, the set $b^A$ is countable. Suppose now that
there are $b,c\in B$ such that $b^A\cap c^A = \{0\}$. Then by the properties
above, $\Cg^\S(b,0)$ relates the orbit of $b$ to $0$ and is the identity
relation elsewhere. A similar statement is true of $\Cg^\S(c,0)$. It follows
that the two congruences meet to $\0$, which contradicts $\0$ being meet
irreducible in $\Con(\S)$. It follows that for all $b,c\in B$, $b^A\cap c^A
\neq \{0\}$.

Pick distinct $b,c\in B$. Then $b^A\cap c^A \neq \{0\}$, so by the
properties above, we have that either $b\in c^A$ or $c\in b^A$. Without loss
of generality, assume that $b\in c^A$. There is a finite number $n$ and
$\alpha_1,\ldots,\alpha_n\in A$ such that $\alpha_1\cdots\alpha_n\cdot c =
b$, so since this is true for all $b,c$, we have that $\bigcup_{b\in B} b^A$
is countable.  Since $B = \bigcup_{b\in B} b^A$, it must be that $B$ is
countable. The property that $|A| = |B|$ and $S = A\cup B\cup \{0\}$
therefore gives us that $\S$ is also countable. Since nonprincipal
ultrapowers of infinite structures are uncountable, this implies that the
ultrapower is principal and $\S\cong \S_{\omega}$.
\end{proof}
%===========================================================================}}}

%==========================================================================={{{
\begin{cor} \label{cor:no DPSC}
$\V(\A'(\T))$ does not have definable principal subcongruences if $\T$ does
not halt.
\end{cor}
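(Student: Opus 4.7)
The plan is to prove the corollary by contraposition, leveraging Theorem \ref{thm:0 mi not defn}. Specifically, I will show that if $\V(\A'(\T))$ had DPSC, then the class of finitely subdirectly irreducible members of $\V(\A'(\T))$ would be first-order axiomatizable, contradicting Theorem \ref{thm:0 mi not defn}.

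Suppose for contradiction that $\V(\A'(\T))$ has DPSC witnessed by congruence formulas $\Gamma(w,x,y,z)$ and $\psi(w,x,y,z)$. The key observation is that FSI-ness amounts to the statement that for any two nontrivial pairs, the congruences they generate meet nontrivially, and DPSC lets us replace the reference to ``generated congruence'' with the first-order-definable $\psi$. Concretely, I would consider the sentence
\[
\sigma = \forall a,b,c,d \bigl[ (a\neq b \wedge c\neq d) \to
\exists \alpha,\beta,\gamma,\delta,r,s \bigl[ r\neq s
\wedge \Gamma(\alpha,\beta,a,b) \wedge \Gamma(\gamma,\delta,c,d)
\wedge \psi(r,s,\alpha,\beta) \wedge \psi(r,s,\gamma,\delta) \bigr] \bigr].
\]

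Next I would verify, for any $\B \in \V(\A'(\T))$, that $\B \models \sigma$ if and only if $\B$ is FSI. For the forward direction: if $\B \models \sigma$, then given $a\neq b$ and $c\neq d$, the witnesses supplied by $\sigma$ give $(r,s) \in \Cg^\B(\alpha,\beta) \subseteq \Cg^\B(a,b)$ and $(r,s) \in \Cg^\B(\gamma,\delta) \subseteq \Cg^\B(c,d)$ with $r\neq s$, because $\Gamma$ and $\psi$ are congruence formulas. Hence $\Cg^\B(a,b) \cap \Cg^\B(c,d) \neq \0$, i.e., $\B$ is FSI. Conversely, if $\B$ is FSI and $a\neq b$, $c\neq d$, then DPSC furnishes subpairs $(\alpha,\beta), (\gamma,\delta)$ with $\alpha \neq \beta$, $\gamma\neq \delta$, $\B\models \Gamma(\alpha,\beta,a,b) \wedge \Gamma(\gamma,\delta,c,d)$, and such that $\psi(-,-,\alpha,\beta)$ and $\psi(-,-,\gamma,\delta)$ define $\Cg^\B(\alpha,\beta)$ and $\Cg^\B(\gamma,\delta)$ respectively. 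Since these are nonzero principal congruences, FSI-ness gives $r\neq s$ lying in both, and these satisfy $\psi(r,s,\alpha,\beta) \wedge \psi(r,s,\gamma,\delta)$, establishing $\sigma$.

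Consequently, adjoining $\sigma$ to the equational axioms of $\V(\A'(\T))$ would yield a first-order axiomatization of the FSI members of $\V(\A'(\T))$. But Theorem \ref{thm:0 mi not defn} says precisely that this class is not axiomatizable when $\T$ does not halt, a contradiction. The argument is short and essentially formal, so there is no significant technical obstacle; the main conceptual step is simply packaging FSI-ness into a first-order sentence via $\Gamma$ and $\psi$, which is exactly what DPSC is designed to enable (in the same spirit as the sentence $\sigma$ defining SI members that appears just before Theorem \ref{thm:Baker Wang DPSC fin based}).
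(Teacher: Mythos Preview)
Your proposal is correct and follows essentially the same approach as the paper: both assume DPSC holds, write down a first-order sentence (your $\sigma$, the paper's $\zeta$) asserting that any two nontrivial principal congruences have nontrivially intersecting $\Gamma$-subcongruences as detected by $\psi$, and observe this axiomatizes the FSI members of $\V(\A'(\T))$, contradicting Theorem~\ref{thm:0 mi not defn}. Your version is in fact slightly more detailed than the paper's, as you explicitly verify both directions of the biconditional ``$\B\models\sigma$ iff $\B$ is FSI,'' which the paper simply asserts.
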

\begin{proof}
Suppose that $\V(\A'(\T))$ has definable principal subcongruences witnessed
by $\Gamma$ and $\psi$, and let
\begin{multline*}
  \zeta = \forall a,b,a',b' \left[(a\neq b) 
    \wedge (a'\neq b') \rightarrow \exists c,d,c',d' 
    \left[ \Gamma(c,d,a,b) \wedge \Gamma(c',d',a',b') \right. \right. \\
  \left. \left. \wedge \exists r,s \left[ r\neq s \wedge \psi(r,s,c,d) 
    \wedge \psi(r,s,c',d') \right] \right] \right].
\end{multline*}
For $\B\in \V(\A'(\T))$, we have that $\B\models \zeta$ if and only if $\B$
is finitely subdirectly irreducible (that is, FSI's are axiomatized by
$\zeta$). This contradicts Theorem \ref{thm:0 mi not defn}, so $\V(\A'(\T))$
cannot have definable principal subcongruences as we assumed.
\end{proof}
%===========================================================================}}}
%==========================================================================}}}1

%=========================================================================={{{1
\section{Conclusion} \label{sec:conclusion}
Theorem \ref{thm:V(A(T)) dpsc} and Corollaries \ref{cor:V(A(T)) fin based}
and \ref{cor:no DPSC} yield the next theorem.

%==========================================================================={{{
\begin{thm}
The following are equivalent.
\begin{enumerate}
  \item $\T$ halts.
  \item $\V(\A'(\T))$ has definable principal subcongruences.
  \item $\V(\A'(\T))$ is finitely based.
\end{enumerate}
\end{thm}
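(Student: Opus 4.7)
The theorem asserts a three-way equivalence, so my plan is to establish (1)$\Rightarrow$(2), (2)$\Rightarrow$(1), (1)$\Rightarrow$(3), and (3)$\Rightarrow$(1); these four implications together yield the conclusion. Three of the four follow directly from material already in hand. Specifically, (1)$\Rightarrow$(2) is Theorem \ref{thm:V(A(T)) dpsc}; (2)$\Rightarrow$(1) is the contrapositive of the corollary closing Section \ref{sec:T does not halt}, which asserts that $\V(\A'(\T))$ lacks DPSC when $\T$ does not halt; and (1)$\Rightarrow$(3) is the corollary following Theorem \ref{thm:Baker Wang DPSC fin based}, which combines DPSC (from (1)$\Rightarrow$(2)) with $\kappa(\A'(\T))<\omega$ (from the last theorem of Section \ref{sec:modifying mckenzies argument}) via Baker--Wang, noting that when $\T$ halts the class of SI's in $\V(\A'(\T))$ is finite and hence trivially first-order axiomatizable.

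The substantive implication is (3)$\Rightarrow$(1), which I would prove by contrapositive: if $\T$ does not halt then $\V(\A'(\T))$ is not finitely based. The plan is to adapt McKenzie's argument from \cite{McKenzieTarskisFiniteBasis} (or Willard's refinement \cite{WillardTarskisFiniteBasis}) for the original $\A(\T)$. When $\T$ does not halt, the discussion after Theorem \ref{thm:McKenzie classify large SI's} shows that $\S_{\ZZ}\in\V(\A'(\T))$ and that $\S_n\in \V(\A'(\T))$ for every $n\in \NN$; these are precisely the witnesses used by McKenzie to construct an infinite family of identities true in $\A(\T)$ but not derivable from any finite subset. The enabling observation for transporting this argument is Lemma \ref{lem:K on large SI}, which gives $K(x,y,z)\approx J'(x,y,z)\approx x\wedge y\wedge z$ on every large SI and in particular on each $\S_n$ and $\S_\omega$, so on the very algebras driving McKenzie's argument the new operation $K$ is indistinguishable from $J'$. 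Consequently McKenzie's infinite family of identities, with each $K$-occurrence rewritten as $J'$ on the witness SI's, yields an infinite family of identities of $\V(\A'(\T))$ that is still not finitely derivable.

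The main obstacle is verifying rigorously that adding $K$ as a fundamental operation does not accidentally collapse McKenzie's infinite family of non-derivable identities into a finitely generated set, since enlarging the signature in general can break non-finite-basis conclusions. Concretely, one must check that every identity of $\V(\A'(\T))$ involving $K$ either (i) restricts on the large SI's to an identity of $\V(\A(\T))$ with $K$ replaced by $J'$, or (ii) holds trivially by $0$-absorption on the small SI's of type $e_i(\overline{n},x)\approx 0$. By Lemma \ref{lem:K on large SI}, case (i) covers all nontrivial situations, so the $K$-enriched variety inherits McKenzie's obstruction to finite basis. Beyond this bookkeeping, the argument is precisely that of \cite{McKenzieTarskisFiniteBasis} or \cite{WillardTarskisFiniteBasis} transported verbatim into $\V(\A'(\T))$.
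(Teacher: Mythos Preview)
Your decomposition is correct and matches the paper's own treatment: the paper gives no explicit proof of this theorem, simply asserting that it follows from ``the above results, in addition to the results from the previous section.'' For $(1)\Leftrightarrow(2)$ and $(1)\Rightarrow(3)$ you cite exactly the results the paper intends (Theorem~\ref{thm:V(A(T)) dpsc}, the corollary following Theorem~\ref{thm:0 mi not defn}, and the corollary to Theorem~\ref{thm:Baker Wang DPSC fin based}).

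For $(3)\Rightarrow(1)$ you are in fact more careful than the paper. The paper's only gesture toward this implication is the remark opening Section~\ref{sec:T does not halt} that McKenzie \cite{McKenzieTarskisFiniteBasis} proved $\A(\T)$ is inherently nonfinitely based when $\T$ does not halt; it then tacitly treats this as settled for $\A'(\T)$. You correctly flag that enlarging the signature by $K$ could in principle destroy non-finite-basedness, and you propose the right fix: on every large SI (in particular on each $\S_n$ and $\S_\ZZ$) Lemma~\ref{lem:K on large SI} gives $K\approx J'\approx x\wedge y\wedge z$, so McKenzie's witness algebras and identities transport to the expanded signature unchanged. This is the intended content behind the paper's citation, made explicit. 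Note, too, that the modifications catalogued in Section~\ref{sec:modifying mckenzies argument} cover only \cite{McKenzieResidualBounds} and \cite{McKenzieResidualBoundNotComp}, not \cite{McKenzieTarskisFiniteBasis}, so your observation that some verification is still owed is well taken; the paper simply leaves this step to the reader.
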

%==========================================================================={{{

\noindent This completes the proof that DPSC is an undecidable property, and
provides another negative answer to Tarski's well-known finite basis
problem.
%==========================================================================}}}1

%=========================================================================={{{1
\bibliographystyle{amsplain}
\bibliography{references}
\begin{center}
  \rule{0.6180\textwidth}{0.1ex}
\end{center}
\end{document}